\newif\ifgitinfo
\def\input@path{{./}}
	\crefname{figure}{figure}{figures}
	\Crefname{figure}{Figure}{Figures}
	\Crefname{assumption}{Assumption}{Assumptions}
	\crefname{enumi}{}{}
	\Crefname{enumi}{}{}
	\Crefname{ass}{Assumption \ref{ass:comon-scheme}}{Assumptions \ref{ass:comon-scheme}}
	\Crefname{ass-sub}{Assumption \ref{ass:comon-scheme} (i)}{Assumptions \ref{ass:comon-scheme} (i)}
	\Crefname{ass-exp-growth}{Assumption \ref{ass:exp-growth}}{Assumptions \ref{ass:exp-growth}}
	\Crefname{ass-pw-lip}{Assumption \ref{ass:pw-lip}}{Assumptions \ref{ass:pw-lip}}
	\Crefname{ass-zvonkin}{Assumption \ref{ass:zvonkin}}{Assumptions \ref{ass:zvonkin}}
	\Crefname{ass-growth-disc}{Assumption \ref{ass:growth-disc}}{Assumptions \ref{ass:growth-disc}}
	\Crefname{ass-growth-disc-A}{\Cref{it:A1}}{\Cref{it:A1}}
\pgfplotsset{compat=newest,
    width=6cm,
    height=3cm,
    scale only axis=true,
    max space between ticks=25pt,
    try min ticks=5,
    every axis/.style={
        axis y line=left,
        axis x line=bottom,
        axis line style={line width = 0.3pt,-,>=latex, shorten >=-.4cm}
    },
    every axis plot/.append style={thick},
    tick style={black, thin}
}
\tikzset{
    semithick/.style={line width=0.5pt},
}
\newtheorem{theorem}{Theorem}[section]
\newtheorem*{mainresult*}{Main result}
\newtheorem*{auxresult*}{Auxiliary result}
\newtheorem{proposition}[theorem]{Proposition}
\newtheorem{lemma}[theorem]{Lemma}
\newtheorem{corollary}[theorem]{Corollary}
\theoremstyle{definition}
\newtheorem{definition}[theorem]{Definition}
\newtheorem{example}[theorem]{Example}
\newtheorem{remark}[theorem]{Remark}
\newtheorem{assumption}[theorem]{Assumption}
\newcommand{\F}{\mathcal F}
\newcommand{\X}{\mathcal X}
\newcommand{\Y}{\mathcal Y}
\newcommand{\B}{\mathcal B}
\newcommand{\Pc}{\mathcal P}
\newcommand{\N}{\mathbb N}
\newcommand{\R}{\mathbb R}
\newcommand{\E}{\mathbb E}
\renewcommand{\P}{\mathbb P} 
\newcommand{\Law}{\mathrm{Law}}
\newcommand{\id}{\mathrm{id}}
\newcommand{\sign}{\mathrm{sign}}
\newcommand{\corr}{\mathrm{corr}}
\newcommand{\loc}{\mathrm{loc}}
\newcommand{\D}{\,\mathrm{d}}
\newcommand{\ds}{\mathrm{d}}
\newcommand{\ind}[1]{\mathds{1}_{\!\left\{{#1}\right\}\!}}
\newcommand{\W}{\mathcal W}
\newcommand{\AW}{\mathcal A\mathcal W}
\newcommand{\cpl}{\mathrm{Cpl}}
\newcommand{\cplba}{\cpl_{\mathrm{bc}}}
\newcommand{\cplbc}{\cpl_{\mathrm{bc}}}
\newcommand{\kr}{\mathrm{KR}}
\newcommand{\sync}{\mathrm{sync}}
\newcommand{\antitone}{\mathrm{AT}}
\newcommand{\mono}{\mathrm{mono}}
\newcommand{\EM}{\mathrm{EM}}
\newcommand{\IEM}{\mathrm{IEM}}
\newcommand{\continuityset}{\R \setminus \bigcup_{k \in \{1, \dotsc, m\}}(\xi_k - c_0, \xi_k + c_0)}
\author{Michaela Hitz \and Benjamin A.\ Robinson}
	\address{Department of Statistics, University of Klagenfurt, Austria}
	\email{michaela.hitz@aau.at}
	\email{benjamin.robinson@aau.at}
\thanks{This research was funded in part by the Austrian Science Fund (FWF) [10.55776/Y782], [10.55776/P35519], [10.55776/P34743]. For open access purposes, the author has applied a CC BY public copyright license to any author accepted manuscript version arising from this submission.}
\title{Bicausal optimal transport for SDEs with irregular coefficients}
\keywords{Adapted Wasserstein distance, bicausal couplings, numerical methods for stochastic differential equations, optimal transport, uncertainty quantification}
\subjclass[2020]{60H10, 49Q22, 65C30, 60H35}
\date{\today}
\numberwithin{equation}{section}
\begin{document}

\begin{abstract}
	We solve constrained optimal transport problems in which the marginal laws are given by the laws of solutions of stochastic differential equations (SDEs). We consider SDEs with irregular coefficients, making only minimal regularity assumptions. We show that the so-called synchronous coupling is optimal among bicausal couplings, that is couplings that respect the flow of information encoded in the stochastic processes. Our results provide a method to numerically compute the adapted Wasserstein distance between laws of SDEs with irregular coefficients. We show that this can be applied to quantifying model uncertainty in stochastic optimisation problems. Moreover, we introduce a transformation-based semi-implicit numerical scheme and establish the first strong convergence result for SDEs with exponentially growing and discontinuous drift.
\end{abstract}
\maketitle

\section{Introduction}\label{sec:intro}
	A fundamental question in stochastic modelling is that of quantifying the effects of model uncertainty. In this context it is of interest to compute an appropriate distance between different stochastic models. In particular, we focus on models that are described by stochastic differential equations (SDEs), potentially having irregular coefficients.
	The main contribution of this paper is to provide a methodology to efficiently compute such a distance between the laws of SDEs, by solving an optimal transport problem. Our approach successfully brings together optimal transport and numerical analysis of SDEs. Moreover, for SDEs with exponentially growing and discontinuous drift, we prove strong well-posedness and strong convergence of a novel numerical scheme.

	In order to compare stochastic models, a reasonable choice of distance is a modification of the Wasserstein distance on the space of probability measures. The Wasserstein distance arises from an optimal transport problem and is defined as the infimum of an expected cost over the set of couplings $\cpl(\mu, \nu)$ between probability measures $\mu$ and $\nu$, that is the set of probability measures on the product space having marginals $\mu$ and $\nu$. This distance metrises the weak topology. However, in many situations the weak topology is unsuitable for comparing stochastic processes, since it does not take into account the information structure that is encoded in the filtrations generated by the processes. As a remedy, the adapted Wasserstein distance and associated bicausal optimal transport problem has been proposed. This constitutes the main object of study in this paper.
		
	We consider stochastic models described by scalar SDEs of the form
		\begin{equation}\label{eq:sde}
			\ds X_t = b_t(X_t) \D t + \sigma_t(X_t) \D W_t, \quad t \in [0, T],
		\end{equation}
	with $X_0 = x_0 \in \R$, where $T \in (0, \infty)$, $b \colon [0, T] \times \R \to \R$ and $\sigma \colon [0, T] \times \R \to [0, \infty)$ are measurable functions, and $W=(W_t)_{t\in[0,T]}$ is a standard scalar Brownian motion.
	Now consider two different models with data $(b,\sigma,x_0,W)$, respectively $(\bar b, \bar \sigma,x_0,\bar W)$.
	For $\Omega = C([0, T], \R)$, $p\ge 1$, and $\Pc_p(\Omega)$ being the set of all probability measures on $\Omega$ with finite $p$\textsuperscript{th} moment, denote the law of the solutions of these SDEs by $\mu \in \Pc_p(\Omega)$, respectively $\nu \in \Pc_p(\Omega)$. Let $(\omega, \bar \omega)$ denote the canonical process on $\Omega \times \Omega$.
	Then the adapted Wasserstein distance between $\mu$ and $\nu$ is defined by
	\begin{equation}
		\AW_p^p(\mu, \nu) \coloneqq \inf_{\pi \in \cplba(\mu, \nu)}\E^\pi\biggl[\int_0^T |\omega_t - \bar \omega_t |^p \D t\biggr],
	\end{equation}
	where we optimise over the set $\cplba(\mu, \nu)$ of bicausal couplings between $\mu$ and $\nu$, that is the set of couplings that satisfy an additional constraint that encodes the filtrations of the processes. Informally, this bicausality condition imposes that for any $t \in [0, T]$, the value $X_t$ is conditionally independent of the process $\bar X$ given $(\bar X_s)_{s \in [0, t]}$, with the symmetric condition when exchanging the roles of $X$ and $\bar X$. In the context of optimisation problems for stochastic processes, such as those that arise in mathematical finance and mathematical biology, the causality condition appears natural. One is typically only able to make decisions based on information contained in the filtration of the process at the present time.
	
	We aim to solve such a bicausal transport problem under minimal regularity assumptions on the coefficients of the SDE \eqref{eq:sde}.
	For one-dimensional SDEs \eqref{eq:sde} with smooth time-homogeneous coefficients, \citet[\S 2.1]{BiTa19} exploit PDE methods to show that the so-called synchronous coupling attains the adapted Wasserstein distance $\AW_2$. This coupling is the joint law of the SDEs when they are driven by a common Brownian motion. Following this work, \citet[Theorem 1.3]{BaKaRo22} show that the synchronous coupling attains $\AW_p$, $p \ge 1$, between the laws of SDEs with globally Lipschitz coefficients. The result of \citep{BaKaRo22} holds also for continuous linearly growing coefficients, provided that pathwise uniqueness holds. However, no explicit conditions for this are given in \cite{BaKaRo22}.
	
	We show that optimality of the synchronous coupling holds in much greater generality, considering also SDEs with irregular coefficients.
	We consider two classes of irregularity:
	\begin{enumerate}[label = (\arabic*)]
		\item \label{it:1} discontinuous drift that grows up to exponentially, degenerate diffusion;
		\item \label{it:2} bounded measurable drift, bounded diffusion that is $\alpha$-H\"older, with $\alpha \in [1/2,1]$.
	\end{enumerate}
	Our main result can be stated informally as follows:
	\begin{mainresult*}
		For models with data $(b,\sigma,x_0,W)$, respectively $(\bar b, \bar \sigma,x_0,\bar W)$, each potentially having any one of the above irregularities, the synchronous coupling is optimal. Furthermore, one can compute the adapted Wasserstein distance by approximating each scalar SDE with any numerical scheme for which strong convergent rates are known.
	\end{mainresult*}
		
	Given that the synchronous coupling attains the adapted Wasserstein distance, the problem of computing this distance reduces to that of simulating two one-dimensional SDEs, using the same noise for each, and applying (multilevel-)Monte Carlo methods. Such computational efficiency is not usually expected, even in the case of classical optimal transport, without some regularisation. Moreover, this distance provides a computationally efficient measure of model uncertainty for optimal stopping problems.
		
	The methods of proof in our paper are based on transformation and discretisation of SDEs. In case \Cref{it:1}, for SDEs with discontinuous and exponentially growing drift, additionally satisfying a piecewise one-sided Lipschitz condition, we introduce a transformation-based semi-implicit Euler--Maruyama scheme for which we prove strong convergence. We use the transformation method developed in \citep{LeSz17} to handle the discontinuities, and combine it with a semi-implicit scheme to handle the super-linear growth. We design this numerical scheme such that it is stochastically monotone. For measures on $\R^n$ satisfying such a monotonicity condition, it is known that the well-studied Knothe--Rosenblatt rearrangement is an optimiser for the bicausal transport problem between them. Applying the strong convergence of the scheme and a stability result for bicausal optimal transport, we prove the optimality of the synchronous coupling. In case \Cref{it:2}, with bounded and measurable drift, we apply the Zvonkin transformation to obtain SDEs with sufficiently regular coefficients, for which optimality results are already known. We show that such a transformation preserves the optimality property of the synchronous coupling. The novel strong convergence result in case \Cref{it:1} can be stated informally as follows:

	\begin{auxresult*}
		For the SDE \eqref{eq:sde} with drift $b$ that has exponential growth and satisfies mild piecewise continuity conditions, and with diffusion $\sigma$ that may be degenerate except at the points of discontinuity of $b$, there exists a unique strong solution with bounded moments of all orders, and a transformation-based drift-implicit Euler--Maruyama scheme converges strongly with known rates.
	\end{auxresult*}	
	
	We conclude this section by commenting on the classes of SDEs covered by our assumptions. To the best of our knowledge, the only existing optimality result for SDEs with discontinuous drift is \cite[Proposition 3.30]{BaKaRo22}, which considers bounded coefficients with additional regularity and integrability conditions. Our case \Cref{it:2} extends the bounded coefficients case to the full generality of the existence and uniqueness result of \citet{Zv74}. For case \Cref{it:1}, we are able to treat unbounded coefficients with discontinuous drift and even possibly degenerate diffusion coefficient. While a boundedness assumption is artificial for many applications, allowing for discontinuous drift coefficients that may grow super-linearly permits us to treat important examples, such as the value of a dividend-paying firm in a Black--Scholes market; see \Cref{ex:super-linear}. For our optimality proof in this setting, we require strong existence and uniqueness, as well as a strongly converging numerical scheme. As described above, we also go beyond existing results in this direction. In the setting of one-dimensional Markovian coefficients, it is unknown whether optimality of the synchronous coupling extends beyond the assumptions considered in this paper, although some of our assumptions are required for well-posedness; see, e.g.\ \Cref{rem:non-degeneracy}.
	One may naturally ask whether our results can be further extended to path-dependent coefficients or multi-dimensional equations. In both cases, however, counterexamples to optimality of the synchronous coupling are given in \cite[Section 5]{BaKaRo22}.
		
		\subsection{Structure of the article}
			Since this work combines optimal transport and numerical analysis of SDEs, we keep it self-contained such that it is accessible to readers from both fields. We provide the necessary background on optimal transport theory in \Cref{sec:OT-intro} and its applications in the theory of stochastic processes in \Cref{sec:OT-bc}. In \Cref{sec:num-sdes}, we discuss numerical approximation schemes for SDEs and their application to bicausal optimal transport. We present some preliminary stability and optimality results in \Cref{sec:preliminaries}.
			
			We present our first theorem in \Cref{sec:growth-disc}, where we consider the first class of irregularity of the coefficients stated above, that is discontinuous drift with exponential growth and degenerate diffusion. We define a novel numerical scheme, prove its strong convergence, and provide convergence rates in \Cref{sec:implicit-transformed-em}. We prove optimality of the synchronous coupling for this class of coefficients in \Cref{sec:bcot-growth-disc}.
			
			In \Cref{sec:zvonkin} we consider the second class of irregularity of the coefficients stated above, that is bounded measurable drift and H\"older continuous diffusion. Applying the Zvonkin transformation, we also prove optimality of the synchronous coupling for this class of coefficients.
			
			We combine the results of the preceding sections in \Cref{sec:mixing-assumptions} to prove the main theorem of this paper.
			In \Cref{sec:numerics} we apply our results to compute adapted Wasserstein distances, and finally we present an application to the quantification of model uncertainty for stochastic optimisation problems in \Cref{sec:finance}.
			
			The proofs of several results from \Cref{sec:preliminaries} and \Cref{sec:growth-disc} are collected in \Cref{app:prelim} and \Cref{app:proofs-transform}, respectively.
			
			The relevant literature can be found in the respective sections.

\section{Optimal transport and applications to stochastic processes}\label{sec:OT}

	Here we introduce the concepts from optimal transport that will be needed for our applications. The field has a long history, originating with \citet{Monge} in 1781 and taken up again by \citet{Kant42} in 1942. For the interested reader, modern accounts can be found, for example, in \citet{Vi03}, \citet{AGS}, and \citet{Sa15}, with the latter taking a more applied point of view. 
	
	\subsection{Introduction to optimal transport}\label{sec:OT-intro}
	
		Given separable metric spaces $(\X, d)$ and $(\Y, \rho)$, let $\B(\X)$, $\B(\Y)$ denote the respective Borel sigma-algebras, and $\Pc(\X)$, $\Pc(\Y)$ the spaces of probability measures on $(\X, \B(\X))$, $(\Y, \B(\Y))$, respectively. Further, let $\Pc(\X \times \Y)$ denote the set of probability measures on the product space equipped with the product Borel sigma-algebra. For $\mu \in \Pc(\X)$, $\nu \in \Pc(\Y)$, define the set of \emph{couplings} $\cpl(\mu, \nu)$ by
		\begin{equation}
			\cpl(\mu, \nu) \coloneqq \{\, \pi \in \Pc(\X \times \Y) : \text{$\pi(A, \Y) = \mu(A)$, $\pi(\X, B) = \nu(B)$, for all $A \in \B(\X), B \in \B(\Y)$} \,\}.
		\end{equation}
		Consider a lower semicontinuous \emph{cost function} $C \colon \X \times \Y \to \R \cup \{\infty\}$.
		
		The \emph{optimal transport problem} \citep{Kant42} is to find
		\begin{equation}\label{eq:kantorovich}\tag{OT}
			\inf_{\substack{\pi \in \cpl(\mu, \nu)\\\Law(X, Y) = \pi}} \E[C(X, Y)],
		\end{equation}
		supposing sufficient conditions on $(\X, \B(\X), \mu)$, $(\Y, \B(\Y), \nu)$ and $C$ such that the expectation in \eqref{eq:kantorovich} is well defined for all $\pi \in \cpl(\mu, \nu)$.
		
		\subsubsection{Brenier's theorem}\label{sec:brenier}
			 Central to the theory of optimal transport is the following key result of Brenier \citep{Br91}: \emph{an optimiser for the quadratic cost is given by the gradient of a convex function}.
			 More precisely, let $\X = \Y = \R^n$ be equipped with the Euclidean metric $|\cdot|$ and set $c(x, y) \coloneqq |x - y|^2$. Suppose that $\mu$ is absolutely continuous with respect to the Lebesgue measure.
			 Then \eqref{eq:kantorovich} admits a unique optimiser $\pi^\mono$ and there exists a convex function $v\colon \R^n \to \R$ such that $\pi^\mono = (\id, \nabla v)_\#\mu$, the pushforward of the measure $\mu$ by the map $(\id, \nabla v)\colon \R^n \to \R^n \times \R^n$.
			 In the one-dimensional case, $\nabla v = F^{-1}_\nu \circ F_\mu$, where $F_\eta, F^{-1}_\eta$ denote the cumulative distribution function and quantile function of a measure $\eta$, and the coupling $\pi^\mono$ is known as the \emph{monotone rearrangement} (or the \emph{Hoeffding--Fr\'echet coupling}).
			 
		\subsubsection{Wasserstein distance}\label{sec:wasserstein}
			Let $\X = \Y$ be equipped with some metric $d$ such that $(\X, d)$ is a Radon space. Then, following \citep[Section 7.1]{AGS}, a metric is induced on the space of probability measures $\Pc_p(\X)$, for any $p \geq 1$, by taking the cost function $C = d^p$ in \eqref{eq:kantorovich}. For measures $\mu, \nu \in \Pc_p(\X)$ the $p$-\emph{Wasserstein distance} $\W_p(\mu, \nu)$ is defined by
			\begin{equation}\label{eq:wass}
				\W_p^p(\mu, \nu) \coloneqq \inf_{\substack{\pi \in \cpl(\mu, \nu)\\\Law(X, Y) = \pi}}\E[d(X, Y)^p].
			\end{equation}
			The $p$-Wasserstein distance metrises the weak topology on $\Pc_p(\X)$ (see \cite[Proposition 7.1.5]{AGS}) and is thus a natural candidate to compare probability measures on $\X$.
	
	\subsection{Optimal transport applied to stochastic processes}\label{sec:OT-bc}
	
		We now consider optimal transport between the laws of real-valued stochastic processes.
		First we treat discrete-time processes.
		
		Set $\X = \Y = \R^n$, for some $n \in \N$. Equip this space with the $L^p$ norm, defined by $\|x\|_p^p \coloneqq \sum_{i = 1}^n|x_i|^p$ for $x \in \R^n$, for some $p \geq 1$, and with the corresponding Borel sigma-algebra $\B(\R^n)$. Let $x = (x_1, \dotsc, x_n)$ denote the canonical process on $\R^n$, and $(x, \bar x)$ the canonical process on the product space $\R^n \times \R^n$. Let $\F = (\F_k)_{k \in \{1, \dotsc, n\}}$ denote the canonical filtration on $\R^n$, and note that $\F_n = \B(\R^n)$. Given a random variable $X$ on $(\R^n, \F_n)$ with law $\eta$, let  $\F^X = (\F^X_k)_{k \in \{1, \dotsc, n\}}$ denote the completion of its natural filtration with respect to $\eta$.
		
		As stated in \Cref{sec:wasserstein}, the Wasserstein distance is a natural candidate to measure the distance between two probability measures on $(\R^n, \F)$. However, in the case that such probability measures are the laws of stochastic processes, the usual Wasserstein distance (and associated weak topology) lacks some desirable properties. For example, \citet[Section 11]{Al81} already noted that deterministic processes can converge weakly to martingales. We illustrate this with the following example from \citet{BaBaBeEd19a}; cf.~\citet[Example 11.4]{Al81}. A continuous-time analogue of this example also appears in \citet[Example 5.1]{BaKaRo22}.
				
		\begin{example}\label{ex:usual}
			Consider the two-step real-valued processes $X, X^\varepsilon$, with laws $\mu, \mu^\varepsilon$, for each $\varepsilon > 0$, defined by
			\begin{equation}
				\begin{split}
					\P[(X^\varepsilon_1, X^\varepsilon_2) = (\varepsilon, 1)] = \P[(X^\varepsilon_1, X^\varepsilon_2) = (-\varepsilon, -1)] & = \frac12,\\
					\P[(X_1, X_2) = (0, 1)] = \P[(X_1, X_2) = (0, -1)] & = \frac12.
				\end{split}
			\end{equation}
			The paths of the processes are shown in \Cref{fig:usual-example}.
			\begin{figure}[h]
				\centering
				\begin{tikzpicture}[baseline={(0,0)}]

\definecolor{darkgray176}{RGB}{176,176,176}
\definecolor{darkorange25512714}{RGB}{255,127,14}
\definecolor{steelblue31119180}{RGB}{31,119,180}

\begin{axis}[
tick align=outside,
tick pos=left,
x grid style={darkgray176},
xmin=0.0, xmax=2.01,
xtick style={color=black},
xtick = {1.0, 2.0},
xticklabels = {\small{$1$}, \small{$2$}},
y grid style={darkgray176},
ymin=-1.05, ymax=1.01,
ytick style={color=black},
ytick = {-1.0, -0.1, 0.1, 1.0},
yticklabels = {\small{$-1$}, \small{$-\varepsilon$}, \small{$\varepsilon$}, \small{$1$}}
]
\draw[-, steelblue31119180, thick](0.0, 0.0) -- (1.0, 0.1) -- (2.0, 1.0);
\draw[dashed, darkorange25512714, thick](0.0, 0.0) -- (1.0, -0.1) -- (2.0, -1.0);
\end{axis}

\coordinate [label = {$X^\varepsilon$}] (X) at (2.0, 2.8);

\end{tikzpicture} \quad \begin{tikzpicture}[baseline={(0,0)}]

\definecolor{darkgray176}{RGB}{176,176,176}
\definecolor{darkorange25512714}{RGB}{255,127,14}
\definecolor{steelblue31119180}{RGB}{31,119,180}

\begin{axis}[
tick align=outside,
tick pos=left,
x grid style={darkgray176},
xmin=0.0, xmax=2.01,
xtick style={color=black},
xtick = {1.0, 2.0},
xticklabels = {\small{$1$}, \small{$2$}},
y grid style={darkgray176},
ymin=-1.05, ymax=1.01,
ytick style={color=black},
ytick = {-1.0, 0, 1.0},
yticklabels = {\small{$-1$}, \small{$0$}, \small{$1$}}
]
\draw[-, steelblue31119180, thick](0.0, 0.0) -- (1.0, 0.0) -- (2.0, 1.0);
\draw[dashed, darkorange25512714, thick](0.0, -0.0) -- (1.0, -0.0) -- (2.0, -1.0);
\end{axis}

\coordinate [label = {$X$}] (X) at (2.0, 2.8);

\end{tikzpicture}
				\caption{A sequence of two-step deterministic processes $X^\varepsilon$ converges weakly as $\varepsilon \to 0$ to a martingale $X$ \citep[Figure 1]{BaBaBeEd19a}.}
				\label{fig:usual-example}
			\end{figure}
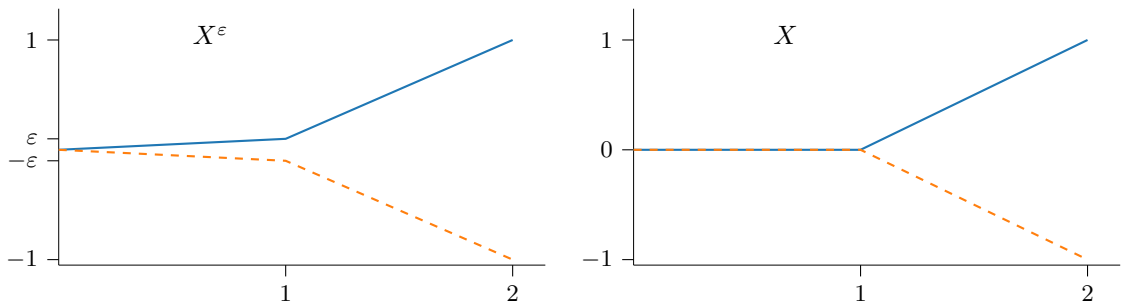
			Define $\pi^\ast \in \cpl(\mu^\varepsilon, \mu)$ such that positive (resp.~negative) values of $X^\varepsilon$ are coupled with  positive (resp.~negative) values of $X$; that is the solid blue paths in \Cref{fig:usual-example} are coupled with each other, and the dashed orange paths are coupled with each other. Then we find that, for $\Law(X^\varepsilon, X) = \pi^\ast$,
			\begin{equation}
				\W_p^p(\mu^\varepsilon, \mu) \leq \sum_{i = 1}^2\E[|X^\varepsilon_i - X_i|^p] = \varepsilon^p,
			\end{equation}
			implying the weak convergence $\mu^\varepsilon\rightharpoonup\mu$ as $\varepsilon \to 0$.
			
			On the other hand, consider the values of optimal stopping problems $V^\varepsilon \coloneqq \sup_\tau \E[X^\varepsilon_\tau]$ and $V \coloneqq \sup_\tau \E[X_\tau]$, where the suprema are taken over stopping times taking values in $\{1, 2\}$. Since $X$ is a martingale centred at $0$, we immediately see that $V = 0$. However, in the case of $X^\varepsilon$, the optimal stopping time is $2$ in the case that $X^\varepsilon_1 > 0$, and $1$ otherwise, and so we obtain
			\begin{equation}
				V^\varepsilon = \frac12 (1 - \varepsilon) \to \frac12.
			\end{equation}
			Thus we see that the processes $X^\varepsilon$ and $X$ have a very different information structure, but nevertheless their laws are close in the usual Wasserstein distance. This example motivates the introduction of the \emph{adapted Wasserstein distance} and the associated \emph{bicausal optimal transport} problem.
		\end{example}
		
		\subsubsection{Bicausal optimal transport}
			We have seen that the drawback of the Wasserstein distance for comparing stochastic processes is that this distance is not able to distinguish different information structures. To rectify this shortcoming, we consider Wasserstein distances with the additional constraint that the couplings must respect the flow of information. This constraint has been formalised in various ways, including
			the \emph{Markov constructions} of \citet{Ru85}, the \emph{nested distances} of \citet{PfPi12, PfPi14}, a stochastic control problem of \citet{BiTa19}, the \emph{causal transport plans} of \citet{La18}, and the \emph{adapted Wasserstein distance} introduced by \citet{BaBaBeEd19a}. Here we will use the terminology \emph{adapted Wasserstein distance}, which we define below. Although there are other candidates for distances between stochastic processes, many such distances have been shown to be topologically equivalent to the adapted Wasserstein distance; see, for example, \citep{BaBaBeEd19b, BoLiOb23}.
			
			Analogously to the classical Wasserstein distance, the adapted Wasserstein distance is induced by a \emph{bicausal optimal transport} problem.			
			\begin{definition}
			\label{def:bicausal}
				Let $\mu, \nu$ be probability measures on $(\R^n, \F)$ and let $\pi \in \cpl(\mu, \nu)$. The coupling $\pi$ is \emph{causal} if, for any random variables $X,Y$ with $\Law(X, Y) = \pi$, we have the conditional independence
				\begin{equation}\label{eq:causality}
					\text{$\F^Y_k$ is independent of $\F^X_n$ under $\pi$ conditional on $\F^X_k$},
				\end{equation}
				for all $k \in \{1, \dotsc, n\}$. If $\pi \in \cpl(\mu, \nu)$ is causal, and the coupling $\theta_\#\pi \in \cpl(\nu, \mu)$ is causal, where $\theta(x, y) = (y, x)$, then we say that $\pi$ is \emph{bicausal}. We write $\cplba(\mu, \nu)$ for the set of bicausal couplings.
			\end{definition}
			
			In words, $\pi = \Law(X, Y)$ is causal if the present value of $Y$ is independent of the future of the process $X$, conditional on the past of $X$.
			
			We assume that the cost function $C \colon \R^n \times \R^n \to \R$ in \eqref{eq:kantorovich} takes the following form. Suppose that there exist continuous functions $c_k \colon \R \times \R \to \R$, for $k \in \{1, \dotsc, n\}$ such that $C(x, y) = \sum_{k = 1}^n c_k(x_k, y_k)$, for any $x = (x_1, \dotsc, x_n), y = (y_1, \dotsc, y_n) \in \R^n$. For each $k \in \{1, \dotsc, n\}$, suppose that the function $c_k$ has \emph{polynomial growth}; i.e.~for some $p \geq 1$, there exists $K \geq 0$ such that, for all $x, y \in \R$, $k \in \{1, \dotsc, n\}$,
			\begin{equation}\label{eq:poly-growth}
				|c_k(x,y)|\leq K[1+|x|^p+|y|^p],
			\end{equation}
			and that $c_k$ is \emph{quasi-monotone}\footnote{also called \emph{L-superadditive} or \emph{supermodular}; see \citet[Remark 1.1]{BlGrSa89}} in the sense that
			\begin{equation}\label{eq:superadditivity}
				c_k(x, y) + c_k(x^\prime, y^\prime) - c_k(x, y^\prime) - c_k(x^\prime, y) \geq 0, \quad \text{for all} \; x \leq x^\prime, \, y \leq y^\prime.
			\end{equation}
			
			For $\mu, \nu \in \Pc_p(\R^n)$, the \emph{bicausal optimal transport problem} is then to find \eqref{eq:kantorovich} with the usual set of couplings $\cpl(\mu, \nu)$ replaced by the set of bicausal couplings $\cplba(\mu, \nu)$, i.e.
				\begin{equation}\label{eq:bcot}
					\inf_{\pi \in \cplba(\mu, \nu)}\E^\pi\!\left[\sum_{k = 1}^n c_k(x_k, \bar x_k)\right]\!.
				\end{equation}
			Taking each function $c_k$ in \eqref{eq:bcot} to be the $p$\textsuperscript{th} power of the Euclidean distance, we define an adapted analogue of the Wasserstein distance \eqref{eq:wass} as follows.
			
			\begin{definition}[adapted Wasserstein distance -- discrete time]\label{def:disc-aw}
				Let $p \geq 1$ and $\mu, \nu \in \Pc_p(\R^n)$. Then the \emph{adapted Wasserstein distance} $\AW_p(\mu, \nu)$ is defined by
				\begin{equation}
					\AW_p^p(\mu, \nu) \coloneqq \inf_{\pi \in \cplba(\mu, \nu)}\E^\pi\!\left[\sum_{k = 1}^n|x_k - \bar x_k|^p\right]\!.
				\end{equation}
			\end{definition}
			
			\begin{remark}
			The term \emph{adapted} appears natural here, since \Cref{def:bicausal} constrains the usual set of couplings to those that satisfy a condition analogous to adaptedness of stochastic processes. The term \emph{causal}, introduced in \citep{La18}, does not necessarily correspond to the notion of causality in statistics; for example, the product coupling is bicausal. On the other hand, \citet{ChEc23} relate causal optimal transport to the causal structure of graphical models.
			\end{remark}
			
			\begin{example}[\Cref{ex:usual} revisited]\label{ex:revisited}
				For the laws given in \Cref{ex:usual}, the adapted Wasserstein distance $\AW_p(\mu^\varepsilon, \mu)$ is bounded away from $0$, uniformly in $\varepsilon > 0$, for any $p \geq 1$. Indeed, under any $\pi \in \cplba(\mu, \mu^\varepsilon)$, $\F^{X^\varepsilon}_1$ is independent of $\F^{X}_2$ conditional on $\F^{X}_1$. Since $\F^{X}_1$ is trivial, $\F^{X^\varepsilon}_1$ and $\F^{X}_2$ are independent under $\pi$, without any conditioning. Moreover, $X^\varepsilon_2$ is fully determined by $X^\varepsilon_1$, so $\F^{X^\varepsilon}_2 = \F^{X^\varepsilon}_1$. Thus $\{\F^{X^\varepsilon}_k\}_{k \in \{1, 2\}}$ is independent of $\F^X_2$ under $\pi$ and, using again that $\F^X_1$ is trivial, is independent of $\{\F^{X}_k\}_{k \in \{1, 2\}}$. Hence the product coupling is the only element of $\cplba(\mu, \mu^\varepsilon)$, and so $\AW_p^p(\mu^\varepsilon, \mu)  = \varepsilon^p + 2^{p-1} > 2^{p-1}$.
				Recalling that, for the optimal stopping problems in \Cref{ex:usual}, $V^\varepsilon \not \to V$ as $\varepsilon \to 0$, the adapted Wasserstein distance appears to be better suited to comparing stochastic processes than the usual Wasserstein distance.
			\end{example}
			
			Stability of various optimisation problems with respect to the adapted Wasserstein distance is shown in \citet{BaBaBeEd19a}. Moreover, optimal stopping problems are in fact continuous with respect to the adapted Wasserstein distance by \citep[Lemma 7.1]{BaBaBeEd19b}; cf.~\citep[Theorem 17.2]{Al81}. This confirms our intuition from \Cref{ex:usual,ex:revisited}.
		
		\subsubsection{Optimality of bicausal couplings}
		
			Under the following condition of stochastic monotonicity, \citet{Ru85} identified an optimiser for a bicausal transport problem between the laws of discrete-time Markov processes. For a measure $\eta$ on $\R^n$, denote by $\eta_1$ the first one-dimensional marginal of $\eta$ and, for each $k \in \{2, \dotsc, n\}$, denote by $\eta_{x_1, \dotsc, x_{k-1}}$ the $k$\textsuperscript{th} conditional marginal of $\eta$, given that the first $k-1$ coordinates are equal to $(x_1, \dotsc, x_{k -1})$.
			
			\begin{definition}[stochastic co-monotonicity \citep{Da68}]\label{def:stoch-dom}
				Let $(X_n)_{n \in \{1, \dotsc, N\}}$ be a real-valued Markov process with law $\mu \in \Pc(\R^n)$. The process $X$, or the measure $\mu$, is said to be \emph{stochastically increasing} (resp.\ \emph{decreasing}) if $x_k \mapsto \mu_{x_1, \dotsc, x_{k - 1}, x_k}$ is increasing (resp.\ decreasing) in first order stochastic dominance, for each $k \in \{1, \dotsc, N - 1\}$.
				Two such processes or measures are \emph{stochastically co-monotone} if they are both stochastically increasing, both stochastically decreasing, or one is stochastically increasing and decreasing while the other is arbitrary.
			\end{definition}
			
			As we shall see below, an optimal bicausal coupling under this monotonicity condition is given by the Knothe--Rosenblatt rearrangement, introduced independently by \citet{Ro52} and \citet{Kn57}. This coupling is a multi-dimensional extension of the monotone rearrangement described in \Cref{sec:brenier}. Under the Knothe--Rosenblatt rearrangement, the first marginals are coupled monotonically and then, inductively, the conditional marginals are coupled monotonically; see \citep[Figure 1]{BaKaRo22} for an illustration.
			
			\begin{definition}[Knothe--Rosenblatt rearrangement]\label{def:kr}
					Given probability measures $\mu, \nu$ on $\R^n$, let $U_1$, $\dotsc\,$, $U_n$ be independent uniform random variables on $[0, 1]$, define $X_1 = F_{\mu_1}^{-1}(U_1)$, $Y_1 = F_{\nu_1}^{-1}(U_1)$ and, for $k \in \{2, \dotsc, n\}$, define inductively the random variables
				\begin{equation}
					X_k = F^{-1}_{\mu_{X_1, \dotsc, X_{k - 1}}}(U_k), \quad Y_k = F^{-1}_{\nu_{Y_1, \dotsc, Y_{k - 1}}}(U_k).
				\end{equation}
				The Knothe--Rosenblatt rearrangement between the marginals $\mu$ and $\nu$ is then given by 
				\[
				\pi^\kr_{\mu, \nu} = \Law(X_1, \dotsc, X_n, Y_1, \dotsc, Y_n).
				\]
			\end{definition}
			We will make use of the following equivalent definition, which is proved in \citet[Proposition 5.8]{BaBeLiZa17}. Similarly to \Cref{def:stoch-dom} of stochastic co-monotonicity, we say that two functions on $\R$ are \emph{co-monotone} if they are both increasing or both decreasing.
			\begin{proposition}\label{rem:kr-equivalent-def}
				A coupling $\pi \in \cpl(\mu, \nu)$ is equal to $\pi^\kr_{\mu, \nu}$ if and only if $\pi = \Law(X, Y)$, where
				\begin{align}
						X & = \bigl(T_1(U_1), T_2(U_2; X_1), \dotsc, T_n(U_n; X_1, \dotsc, X_{n - 1})\bigr),\\
						Y & = \bigl(S_1(U_1), S_2(U_2; Y_1), \dotsc, S_n(U_n; Y_1, \dotsc, Y_{n - 1})\bigr),
				\end{align}
				for $U_1$, $\dotsc\,$, $U_n$ independent uniform random variables on $[0, 1]$, and functions $T_i, S_i \colon \R^i \to \R$ that are co-monotone in $U_i$, for each $i \in \{1, \dotsc, n\}$.
			\end{proposition}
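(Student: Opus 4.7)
The plan is to prove the two implications separately. The ``only if'' direction is essentially a rereading of \Cref{def:kr}: choosing
\[
T_k(u; x_1, \dotsc, x_{k-1}) \coloneqq F^{-1}_{\mu_{x_1, \dotsc, x_{k-1}}}(u), \qquad S_k(u; y_1, \dotsc, y_{k-1}) \coloneqq F^{-1}_{\nu_{y_1, \dotsc, y_{k-1}}}(u)
\]
makes both $T_k$ and $S_k$ non-decreasing in $u$ as quantile functions, hence trivially co-monotone, and by construction the resulting joint law of $(X,Y)$ equals $\pi^\kr_{\mu,\nu}$.

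The substance lies in the ``if'' direction, which I would prove by induction on $k$ using as the central lemma the following one-dimensional fact: if $f,g\colon [0,1] \to \R$ are both non-decreasing (the non-increasing case is symmetric) and $U$ is uniform on $[0,1]$, then
\[
\P[f(U) \le x,\, g(U) \le y] = F_{f_\#\mathrm{Leb}}(x) \wedge F_{g_\#\mathrm{Leb}}(y),
\]
which is precisely the bivariate distribution function of the monotone rearrangement of $f_\#\mathrm{Leb}$ and $g_\#\mathrm{Leb}$. Equivalently, the joint law of two co-monotone functions of a common uniform depends only on their marginals and coincides with the Hoeffding--Fr\'echet coupling of these marginals. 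The base case $k=1$ applies this lemma to $T_1(U_1)$ and $S_1(U_1)$, whose marginals are $\mu_1$ and $\nu_1$ since $\pi \in \cpl(\mu,\nu)$.

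For the inductive step, assume the representation matches the first $k-1$ steps of \Cref{def:kr}. I would condition on $\sigma(U_1, \dotsc, U_{k-1})$, which by construction equals $\sigma(X_1, \dotsc, X_{k-1}, Y_1, \dotsc, Y_{k-1})$; since $U_k$ is independent of this sigma-algebra and remains uniform, the variables $T_k(U_k; X_1, \dotsc, X_{k-1})$ and $S_k(U_k; Y_1, \dotsc, Y_{k-1})$ are co-monotone functions of a fresh uniform. The lemma then identifies their conditional joint distribution with the monotone coupling of their conditional marginals, which matches the $k$-th step of \Cref{def:kr} and closes the induction.

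The step I expect to be the main obstacle is verifying within the inductive step that the conditional marginal of $T_k(U_k; X_1, \dotsc, X_{k-1})$ really equals the disintegration $\mu_{X_1, \dotsc, X_{k-1}}$ $\pi$-a.s.\ (and symmetrically for $S_k$). This requires combining the marginal constraint $\pi \in \cpl(\mu,\nu)$ with the independence of $U_k$ from past coordinates through a regular-conditional-probability argument, and is the only place where measure-theoretic care is needed beyond the clean monotone-coupling lemma.
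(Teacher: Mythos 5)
The paper does not actually prove this proposition: it cites \citet[Proposition 5.8]{BaBeLiZa17} and takes the equivalence as given. Your from-scratch argument is therefore a genuine addition rather than a retread, and its core is sound. The ``only if'' direction is indeed immediate from \Cref{def:kr}, and the one-dimensional lemma at the heart of the ``if'' direction is correct: for $f, g$ both non-decreasing and $U$ uniform, $\{f(U) \le x\}$ and $\{g(U) \le y\}$ are each intervals of the form $[0, a]$, so the joint CDF is the minimum of the marginal CDFs, i.e.\ the Hoeffding--Fr\'echet coupling; the both-decreasing case reduces to it by replacing $U$ with $1-U$.

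One claim in the inductive step is false as stated and should be softened. You assert that $\sigma(U_1, \dotsc, U_{k-1})$ ``by construction equals'' $\sigma(X_1, \dotsc, X_{k-1}, Y_1, \dotsc, Y_{k-1})$. In general only the inclusion $\sigma(X_1, \dotsc, X_{k-1}, Y_1, \dotsc, Y_{k-1}) \subseteq \sigma(U_1, \dotsc, U_{k-1})$ holds: the $T_i$ need not be injective (they could even be constant), so $U_i$ need not be recoverable from $X_i$. The repair is routine but worth making explicit: condition on the \emph{larger} sigma-algebra $\sigma(U_1, \dotsc, U_{k-1})$, apply your lemma to conclude that the conditional law of $(X_k, Y_k)$ is the monotone coupling of the conditional laws of $T_k(\,\cdot\,; X_1, \dotsc, X_{k-1})$ and $S_k(\,\cdot\,; Y_1, \dotsc, Y_{k-1})$, and observe that this resulting kernel is measurable with respect to the smaller sigma-algebra $\sigma(X_1, \dotsc, X_{k-1}, Y_1, \dotsc, Y_{k-1})$. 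The tower property then identifies it with the conditional law given $(X_1, \dotsc, X_{k-1}, Y_1, \dotsc, Y_{k-1})$, which is what \Cref{def:kr} requires. The concern you flag about identifying the conditional marginal of $T_k(U_k; X_1, \dotsc, X_{k-1})$ with the disintegration $\mu_{X_1, \dotsc, X_{k-1}}$ is resolved in exactly the same way: the law of $T_k(U_k; x_1, \dotsc, x_{k-1})$ over uniform $U_k$ is by construction a version of the conditional law of $X_k$ given $X_1 = x_1, \dotsc, X_{k-1} = x_{k-1}$, and $\pi \in \cpl(\mu, \nu)$ forces this to be $\mu_{x_1, \dotsc, x_{k-1}}$ almost surely.
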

			
			The following optimality result is a straightforward modification of \citep[Proposition 3.5]{BaKaRo22}, which generalises earlier results from \citet{BaBeLiZa17, Ru85}. We note that, although \citep[Proposition 3.5]{BaKaRo22} considers a single function $c \equiv c_k$ for all $k \in \{1, \dotsc, n\}$, only minor changes to the proof are required for our setting and we do not repeat the proof here.
			
			\begin{proposition}\label{prop:kr-optimal}
				For $p \geq 1$, $n \in \N$, let $\mu, \nu \in \Pc_p(\R^n)$ be the laws of Markov processes. Consider the bicausal optimal transport problem \eqref{eq:bcot} with continuous functions $c_k \colon \R \times \R \to \R$, $k \in \{1, \dotsc, n\}$, satisfying \eqref{eq:poly-growth} and \eqref{eq:superadditivity}. If $\mu$ and $\nu$ are stochastically co-monotone, then the Knothe--Rosenblatt rearrangement attains the infimum in \eqref{eq:bcot}.
			\end{proposition}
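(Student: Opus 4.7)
The plan is to apply a backward dynamic programming principle for bicausal optimal transport and show by induction on $k$ that the monotone (Hoeffding--Fr\'echet) coupling of the $k$\textsuperscript{th} conditional marginals is optimal at each step. By \Cref{rem:kr-equivalent-def}, a coupling that is monotone in each conditional step is exactly $\pi^\kr_{\mu,\nu}$, which will then identify the optimiser.

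First, I would invoke the dynamic programming principle for bicausal transport in the Markov setting, which yields a family of value functions
\[
V_k(x_{k-1}, \bar x_{k-1}) = \inf_{\pi_k \in \cpl(\mu_{x_{k-1}}, \nu_{\bar x_{k-1}})} \E^{\pi_k}\!\bigl[c_k(x_k, \bar x_k) + V_{k+1}(x_k, \bar x_k)\bigr],
\]
with the convention $V_{n+1} \equiv 0$. The infimum in \eqref{eq:bcot} then equals $\E[c_1(x_1, \bar x_1) + V_2(x_1, \bar x_1)]$ taken over the coupling of the first marginals. The fact that only the present state enters the conditioning is the Markov assumption on $\mu$ and $\nu$.

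Second, the inductive claim is that $V_k$ is quasi-monotone in the sense of \eqref{eq:superadditivity} and continuous (after possibly restricting to a full-measure set). The base case $V_{n+1} \equiv 0$ is trivial, and the terminal cost $c_n$ is quasi-monotone by hypothesis. For the inductive step, fix $x_{k-1} \le x_{k-1}^\prime$ and $\bar x_{k-1} \le \bar x_{k-1}^\prime$. Because $c_k + V_{k+1}$ is a sum of quasi-monotone functions and thus quasi-monotone, the classical Hoeffding--Fr\'echet rearrangement inequality for supermodular costs implies that for each pair of conditional marginals the monotone coupling $\pi^\mono$ attains the infimum in the definition of $V_k$, and the attained value can be written
\[
V_k(x_{k-1}, \bar x_{k-1}) = \int_0^1 \bigl[c_k + V_{k+1}\bigr]\!\bigl(F^{-1}_{\mu_{x_{k-1}}}(u), F^{-1}_{\nu_{\bar x_{k-1}}}(u)\bigr) \di u.
\]
Now the stochastic co-monotonicity assumption enters: up to signs, each quantile function $u \mapsto F^{-1}_{\mu_{x_{k-1}}}(u)$ is monotone in $x_{k-1}$ and, jointly with the corresponding statement for $\nu$, the two families $(F^{-1}_{\mu_{\cdot}})$ and $(F^{-1}_{\nu_{\cdot}})$ are co-monotone in their parameters for fixed $u$. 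Hence, pointwise in $u$, the integrand above is the composition of a quasi-monotone function with co-monotone transformations and is therefore quasi-monotone in $(x_{k-1}, \bar x_{k-1})$; integrating against $\di u$ preserves this property, closing the induction.

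Third, the same Hoeffding--Fr\'echet step applied at $k=1$ picks out the monotone coupling of the first marginals, so the recursively constructed optimiser has the form described in \Cref{rem:kr-equivalent-def} with the $T_i, S_i$ given by conditional quantile functions, i.e.~it is the Knothe--Rosenblatt rearrangement. The assumption \eqref{eq:poly-growth} ensures integrability throughout so that the dynamic programme is well-posed and attainment passes to the global problem. The main obstacle is the propagation of quasi-monotonicity of $V_k$ through the dynamic programme; this is where stochastic co-monotonicity is essential, and the argument needs only cosmetic adaptation from the single-cost setting of \citep[Proposition 3.4]{BaKaRo22} to accommodate the step-dependent costs $c_k$, since quasi-monotonicity is preserved under addition.
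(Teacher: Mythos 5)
Your proof sketch follows the same dynamic-programming-plus-induction strategy that underlies \citep[Proposition 3.4]{BaKaRo22}, to which the paper defers rather than repeating the argument. In particular, the inductive propagation of quasi-monotonicity of the value functions through the conditional-quantile representation, the role of stochastic co-monotonicity in making the quantile maps co-monotone so that composition preserves quasi-monotonicity, the identification of the resulting coupling with $\pi^\kr$ via \Cref{rem:kr-equivalent-def}, and the observation that step-dependent costs $c_k$ only require noting that quasi-monotonicity is closed under addition, are exactly the ``minor changes'' the paper alludes to; this is essentially the same approach.
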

			
			\begin{remark}
				We show in \Cref{ex:second-order-mono} that, when first order stochastic dominance is replaced by second order stochastic dominance for one of the marginals in the definition of stochastic co-monotonicity, optimality of the Knothe--Rosenblatt rearrangement may fail.
			\end{remark}
		
		\subsubsection{Continuous-time bicausal optimal transport}
			The notions of bicausality and adapted Wasserstein distance extend to continuous-time processes.
			
			Fix a finite time horizon $T > 0$ and let $\X = \Y = \Omega \coloneqq C([0, T], \R)$ be the space of real-valued continuous paths on the time interval $[0, T]$. Analogously to the discrete-time setting, equip the space $\Omega$ with the $L^p$ norm, defined by $\|\gamma\|_p^p \coloneqq \int_0^T|\gamma_t|^p \D t$ for $\gamma\in \Omega$, and with the corresponding Borel sigma-algebra $\B(\Omega)$. Let $\omega = (\omega_t)_{t \in [0, T]}$ denote the canonical process on $\Omega$ and $\F = (\F_t)_{t \in [0, T]}$ the canonical filtration, noting that $\F_T = \B(\Omega)$. Let $(\omega, \bar \omega) = ((\omega_t, \bar \omega_t))_{t \in [0, T]}$ denote the canonical process on the product space $\Omega \times \Omega$, and equip this space with the product filtration. As in the discrete-time case, for a random variable $X$ on $(\Omega, \F_T)$ with law $\eta$, the completion of its natural filtration under $\eta$ is denoted by $\F^X = (\F^X_t)_{t \in [0, T]}$.
			
			Let $\mu, \nu \in \Pc_p(\Omega)$, for some $p \geq 1$. We say that a coupling $\pi \in \cpl(\mu, \nu)$ is bicausal if it satisfies the continuous-time analogue of \Cref{def:bicausal}; i.e.~$\pi$ is causal if \eqref{eq:causality} holds for all $t \in [0, T]$, for any random variables $X, Y$ with $\pi = \Law(X, Y)$, and $\pi$ is bicausal if \eqref{eq:causality} also holds with the roles of $X$ and $Y$ reversed. Again we write $\cplba(\mu, \nu)$ for the set of bicausal couplings. Note that, by \citep[Example 1(v), Lemma 4, and Proposition 2]{La18}, our definition is equivalent to \citet[Definition 1.2]{BaBaBeEd19a} and \citet[Definition 1]{La18}.
			
			In the continuous-time setting, we consider cost functions $C \colon \Omega \times \Omega \to \R$ in \eqref{eq:kantorovich} that take the form $C(\omega, \bar \omega) = \int_0^T c_t(\omega_t, \bar \omega_t) \D t$, for any $\omega, \bar \omega \in \Omega$, where $c \colon [0, T] \times \R \times \R \to \R$ is a measurable function. Additionally, we make the following assumption.
			
			\begin{assumption}\label{ass:cost}
				Suppose that $c \colon [0, T] \times \R \times \R \to \R$ is continuous on $[0, T] \times M$ for any compact set $M \subseteq \R \times \R$, and suppose that, for some $p \geq 1$, there exists $K \geq 0$ such that, for each fixed $t \in [0, T]$, $c_t(\cdot, \cdot) \colon \R \times \R \to \R$ satisfies \eqref{eq:poly-growth} and \eqref{eq:superadditivity}.
			\end{assumption}
			For $\mu, \nu \in \Pc_p(\Omega)$, the continuous-time \emph{bicausal optimal transport problem} between $\mu$ and $\nu$ is to find the value
			\begin{equation}\tag{BCOT}\label{eq:bcot-cont}
				V_c(\mu, \nu) \coloneqq \inf_{\pi \in \cplba(\mu, \nu)}\E^\pi\biggl[\int_0^T c_t(\omega_t, \bar \omega_t)\D t\biggr].
			\end{equation}
			We define the adapted Wasserstein distance in the continuous-time setting as follows.
			\begin{definition}[adapted Wasserstein distance -- continuous time]\label{def:aw}
			For $p \geq 1$ and $\mu, \nu \in \Pc_p(\Omega)$, the \emph{adapted Wasserstein distance} $\AW_p(\mu, \nu)$ is defined by
				\begin{equation}\label{eq:adapted-wasserstein}
					\AW_p^p(\mu, \nu) \coloneqq \inf_{\pi \in \cplba(\mu, \nu)}\E^\pi\biggl[\int_0^T |\omega_t - \bar \omega_t |^p \D t\biggr].
				\end{equation}
			\end{definition}
			The adapted Wasserstein distance $\AW_2$, in essentially the form of \Cref{def:aw}, was first studied by \citet{BiTa19} via a stochastic control formulation and associated Hamilton--Jacobi--Bellman equation. \citet{BaKaRo22} studied $\AW_p$ as defined in \Cref{def:aw}, for general $p \geq 1$. We will see in \Cref{sec:numerics} that this distance is amenable to efficient computation, and in \Cref{sec:finance} we show that this distance can be applied to robust stochastic optimisation. For semimartingales, \citet{BaBaBeEd19a} consider an alternative definition of adapted Wasserstein distance based on the Doob decomposition. While this definition also yields stability of various stochastic optimisation problems, we are not aware of efficient methods for the numerical computation of such a distance.
			
			We consider the problem \eqref{eq:bcot-cont} between the laws of stochastic processes that are described by SDEs. To the best of our knowledge, for such continuous-time problems, \citet{BiTa19} and \citet{BaBaBeEd19a} were the first to find an explicit optimiser, the so-called \emph{synchronous coupling}, which we define after the following preparations.
			
			\begin{definition}
				For an adapted process $\rho = (\rho_t)_{t \in [0, T]}$ taking values in $[-1, 1]$, we call $(W, \bar W)$ a \emph{$\rho$-correlated Brownian motion} if $W$, $\bar W$ are each one-dimensional Brownian motions and, for each $t \in [0, T]$, their correlation is given by $\corr(W_t, \bar W_t) = \rho_t$.
			\end{definition}
			
			Let $b, \bar b \colon [0, T] \times \R \to \R$ and $\sigma, \bar \sigma \colon [0, T] \times \R \to [0, \infty)$ be measurable functions, and let $x_0 \in \R$. Suppose that there exists a unique strong solution $(X, \bar X)$ of the system of SDEs
			\begin{equation}\label{eq:sde-correlated}
				\begin{cases}
					\ds X_t = b_t(X_t) \D t + \sigma_t(X_t) \D W_t, & X_0 = x_0, \\
					\ds \bar X_t = \bar b_t(\bar X_t) \D t + \bar \sigma_t(\bar X_t) \D \bar W_t, & \bar X_0 = x_0,
				\end{cases}
			\end{equation}
			on the time interval $[0, T]$, where $(W, \bar W)$ is a correlated Brownian motion, and write $\mu, \nu$ for the laws of $X, \bar X$, respectively. It is shown in \citep[Proposition 2.2]{BaKaRo22} that the set of bicausal couplings $\cplba (\mu, \nu)$ is equal to the set of all couplings of the form $\pi = \Law(X, \bar X)$, where \eqref{eq:sde-correlated} is driven by some correlated Brownian motion. In particular, the synchronous coupling is defined as follows.
			
			\begin{definition}\label{def:synchronous}
				 Let $(X, \bar X)$ be the unique strong solution of \eqref{eq:sde-correlated} with $W = \bar W$. Then the \emph{synchronous coupling} is defined as $\pi^\sync_{\mu, \nu} \coloneqq \Law(X, \bar X)$, the joint law of the solutions when the SDEs are driven by a common Brownian motion.
			\end{definition}
			
		\subsection{Numerical methods for SDEs}\label{sec:num-sdes}
			
			In order to solve SDEs of the form \eqref{eq:sde}, one usually has to resort to numerical methods. The simplest of these is the \emph{Euler--Maruyama scheme}. Let $N \in \N$ and $h = T/N$. We suppose here and throughout the paper that $N > T$ so that $h < 1$. The Euler--Maruyama scheme $\bigl(X^{h, \EM}_{kh}\bigr)_{k \in \{0, \dotsc, N\}}$ is defined by $X^{h, \EM}_0 = x_0$ and, for all $k \in \{0, \dotsc, N - 1\}$
		   \begin{equation}\label{eq:em}
		           X^{h,\EM}_{(k+1)h} = X^{h,\EM}_{kh} + b_{kh}\bigl(X^{h,\EM}_{kh}\bigr) h + \sigma_{kh}\bigl(X^{h,\EM}_{kh}\bigr)\Delta W_{k + 1},
		   \end{equation}
		   where $\Delta W_{k + 1} \coloneqq W_{(k + 1)h} - W_{kh} \sim \mathcal{N}(0,h)$. One can also define a \emph{semi-implicit} Euler--Maruyama scheme $\bigl(X^{h, \IEM}_{kh}\bigr)_{k \in \{0, \dotsc, N\}}$ by $X^{h, \EM}_0 = x_0$ and, for all $k \in \{0, \dotsc, N - 1\}$,
		  \begin{equation}\label{eq:iem}
		           X^{h,\IEM}_{(k+1)h} = X^{h,\IEM}_{kh} + b_{kh}\bigl(X^{h,\IEM}_{(k+1)h}\bigr) h + \sigma_{kh}\bigl(X^{h,\IEM}_{kh}\bigr)\Delta W_{k + 1}.
		   \end{equation}
		   
		   Under global Lipschitz conditions on the coefficients, both of the above schemes converge in the strong $L^p$-sense to the true solution of the SDE; see, for example, \citet{KlPl92}.
		   In recent years, numerical methods for SDEs with irregular (non-globally Lipschitz continuous) coefficients have been studied intensively. We refer to \Cref{rem:growth-disc,rem:zvonkin} and \Cref{ex:num-disc-drift,ex:cir} for references to relevant results.
		
			The \emph{monotone Euler--Maruyama scheme} is introduced in \citep[Definition 3.13]{BaKaRo22}. This scheme is defined analogously to \eqref{eq:em} with the Brownian increments $\Delta W_{k + 1}$ replaced by \emph{truncated Brownian increments} $\Delta W^h_{k + 1}$, defined in \Cref{def:trunc-bm} below. Similarly, we define the \emph{monotone semi-implicit Euler--Maruyama scheme} by replacing the Brownian increments in \eqref{eq:iem} by truncated Brownian increments.
		
			\begin{definition}[{truncated Brownian motion; cf.~\cite[\S 2.1]{MiReTr02}}]\label{def:trunc-bm}
				Let $(W_t)_{t \in [0, T]}$ be a standard Brownian motion, let $N \in \N$ and set $h = T/N$. Define $A_h = 2 \sqrt{- h \log h}$ and, for each $k \in \{0, \dotsc, N - 1\}$, define the stopping time $\tau^h_k \coloneqq \inf\{\, t \geq kh :  |W_t - W_{kh}| > A_h \,\}$. The \emph{truncated Brownian motion} $(W^h_t)_{t \in [0, T]}$ is defined by $W^h_0 = 0$ and $W^h_t \coloneqq W^h_{kh} + W_{t \wedge \tau^h_k} - W_{kh}$, for $t \in (kh, (k+1)h]$, $k \in \{0, \dotsc, N - 1\}$. The \emph{truncated Brownian increments} are defined by $\Delta W^h_{k + 1} \coloneqq W^h_{(k + 1)h} - W^h_{kh} = W_{(k+1)h \wedge \tau^h_k} - W_{kh}$, $k \in \{0, \dotsc, N - 1\}$.
			\end{definition}
			
			If the coefficients $b$ and $\sigma$ of the SDE \eqref{eq:sde} are time-homogeneous and globally Lipschitz continuous, \cite[Lemma 3.14]{BaKaRo22} shows that, in contrast to \eqref{eq:em}, the monotone Euler--Maruyama scheme is stochastically increasing for sufficiently small step size. The scheme also converges strongly to the solution of \eqref{eq:sde}; in particular, \cite[Proposition 3.16]{BaKaRo22} shows that the linear interpolation of the monotone Euler--Maruyama scheme converges in $L^p$ uniformly in time. Combined with a stability argument, these properties are key to the proof of optimality of the synchronous coupling in \cite[Proposition 3.19]{BaKaRo22}. Similar numerical schemes based on truncated Brownian increments are also studied in \citet{LiPa22} and \citet{JoPa23}, with applications to (monotone) convex ordering of SDEs. 
			
			A different perspective on the interplay between optimal transport and numerical methods for SDEs is given by \citet{Da22}.
	
\section{Preliminary results}\label{sec:preliminaries}
	
	We present two general stability results for bicausal optimal transport, which we will use in our optimality proofs. The first allows us to pass from discrete to continuous time, and the second allows us to approximate the coefficients of SDEs. As an immediate consequence, we obtain a first generalisation of \cite[Theorem 1.3]{BaKaRo22}, which we will later apply in combination with a transformation of SDEs in order to prove \Cref{thm:zvonkin-optimality}. The proofs of the results in this preliminary section are presented in \Cref{app:prelim}.
	
	\begin{assumption}\label{ass:comon-scheme}
		Suppose that $(X, \bar X)$ and $(X^h, \bar X^h)$ satisfy the following assumptions:
		\begin{enumerate}[label = (\roman*)]
			\item $(X, \bar X)$ are the unique strong solutions of the SDEs \eqref{eq:sde-correlated} driven by some correlated Brownian motion $(W, \bar W)$, where $X = (X_t)_{t \in [0, T]}$, $\bar X = (\bar X_t)_{t \in [0, T]}$, with $X_0 = \bar X_0 = x_0$;
			\item for any $N \in \N$ with $N > T$ and $h = T/N$, $X^h = (X_{kh})_{k \in \{0, \dotsc, N\}}$ and $\bar X^h = (\bar X_{kh})_{k \in \{0, \dotsc, N\}}$, with $X^h_0 = \bar X^h_0 = x_0$;
			\item \label[ass]{ass:it-2} there exist measurable maps
				\begin{equation}
					\phi, \bar \phi \colon (0, 1) \times \N \times \R \times \R \to \R, \quad \text{and} \quad f \colon (0, 1) \times \N \times C([0, T], \R) \to \R,
				\end{equation}
				such that $\phi$ and $\bar \phi$ are co-monotone in their final argument and, for any correlated Brownian motion $(W, \bar W)$, any $N \in \N$ with $N > T$ and $h = T/N$, and any $k \in \{0, \dotsc, N - 1\}$,
				\begin{equation}
					X^h_{(k + 1)h} = \phi\bigl(h, k, X^h_{kh}, f(h, k, W)\bigr), \quad \bar X^h_{(k + 1)h} = \bar \phi\bigl(h, k, \bar X^h_{kh}, f(h, k, \bar W)\bigr);
				\end{equation}
			\item \label[ass]{ass:it-1} there exists $N_0 \in \N$ with $N_0 > T$ such that, for any $N \in \N$ with $N \geq N_0$ and $h = T/N$, $X^h$ and $\bar X^h$ are stochastically co-monotone (see \Cref{def:stoch-dom});
			\item \label[ass]{ass:it-3} in the case that $(X, \bar X)$ and $(X^h, \bar X^h)$ are driven by the same correlated Brownian motion $(W, \bar W)$, for some $p \geq 1$, the linear interpolation of $X^h$ (resp.~$\bar X^h$) converges in $L^p$ to $X$ (resp.~$\bar X$); i.e.~
				\begin{align}
					\lim_{h \to 0}\E\!\left[\sum_{k = 1}^N\int_{(k-1)h}^{kh}|X^h_{kh} - X_s|^p\D s\right]\! =  \lim_{h \to 0}\E\!\left[\sum_{k = 1}^N\int_{(k-1)h}^{kh}|\bar X^h_{kh} - \bar X_s|^p\D s\right]\! = 0.
				\end{align}
		\end{enumerate}
	\end{assumption}
	
	If $(X, \bar X)$ and $(X^h, \bar X^h)$ satisfy \Cref{ass:comon-scheme}, then we say that $X^h$ (resp.~$\bar X^h$) is a numerical scheme for $X$ (resp.~$\bar X$).
	
	\begin{example}\label{ex:comon-scheme}
		Suppose that $(b, \sigma)$ are time-homogeneous and Lipschitz.
		\begin{enumerate}[label = (\roman*)]
			\item The monotone Euler--Maruyama scheme $X^h$ for $X$ corresponds to taking $\phi$, $f$ in \Cref{ass:comon-scheme} such that, for any Brownian motion $B$, any $N \in \N$ sufficiently large, $h = T/N$ and $k \in \{0, \dotsc, N - 1\}$, $f(h, k, B) = \Delta B^h_{k + 1}$, the truncated Brownian increment defined in \Cref{def:trunc-bm}, and $$\phi\bigl(h, k, x, f(h, k, B)\bigr) = x + b(x) h + \sigma(x) f(h, k, B).$$
			\item The semi-implicit monotone Euler--Maruyama scheme $X^h$ for $X$ corresponds to taking $\phi$, $f$ in \Cref{ass:comon-scheme} such that, for any Brownian motion $B$, any $N \in \N$ sufficiently large, $h = T/N$ and $k \in \{0, \dotsc, N - 1\}$, $f(h, k, B) = \Delta B^h_{k + 1}$ and $$\phi\bigl(h, k, x, f(h, k, B)\bigr) = (\id - b h)^{-1}\bigl(x + \sigma(x) f(h, k, B)\bigr).$$
		\end{enumerate} 
	\end{example}
	
	\begin{proposition}\label{prop:general-optimality}
		Suppose that $(X, \bar X)$ and $(X^h, \bar X^h)$ satisfy \Cref{ass:comon-scheme} for some $N_0 \in \N$ and $p \geq 1$, and write $\mu = \Law(X)$, $\nu = \Law(\bar X)$, $\mu^h = \Law(X^h)$, $\nu^h = \Law(\bar X^h)$. Let $c \colon [0, T] \times \R \times \R \to \R$ satisfy \Cref{ass:cost} with the same value of $p$. Then
		\begin{equation}\label{eq:limit-bcot}
			\lim_{h \to 0}\inf_{\pi^h \in \cplba(\mu^h, \nu^h)} \E^{\pi^h} \biggl[h\sum_{k = 1}^Nc_{kh}(x_k, y_k)\biggr] = \inf_{\pi \in \cplba(\mu, \nu)}\E^\pi\!\left[\int_0^Tc_t(\omega_t, \bar \omega_t) \D t\right]\!,
		\end{equation}
		and the synchronous coupling $\pi^\sync_{\mu, \nu}$ attains the infimum on the right hand side.
	\end{proposition}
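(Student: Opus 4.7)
The plan is in four steps: (i) identify an explicit optimiser for each discrete bicausal problem on the LHS; (ii) show this optimiser coincides with the synchronous coupling of the numerical schemes; (iii) pass to the $h \to 0$ limit using the strong $L^p$ convergence in \Cref{ass:it-3}; (iv) close the loop via a stability argument to show that $\pi^\sync$ attains the RHS infimum.

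For step (i), fix $N \geq N_0$ and $h = T/N$. By \Cref{ass:it-2}, $\mu^h, \nu^h$ are laws of Markov chains on $\R^N$. Combined with \Cref{ass:it-1} and the standing \Cref{ass:cost} on $c$, \Cref{prop:kr-optimal} gives that the Knothe--Rosenblatt rearrangement $\pi^{\kr,h}$ attains the LHS infimum, which I denote $V^h$. For step (ii), the Markov structure of the scheme forces the noises $(f(h,k,W))_{k=0}^{N-1}$ to be independent (each is essentially the truncated or untruncated Brownian increment on $[kh,(k+1)h]$). Transforming each to a uniform $U_k$ via its inverse CDF, and using that $\phi, \bar\phi$ are co-monotone in their final argument, the synchronous coupling $\pi^{h,\sync} := \Law(X^h, \bar X^h)$ driven by $W = \bar W$ takes the form described in \Cref{rem:kr-equivalent-def}, hence coincides with $\pi^{\kr,h}$. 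In particular
\begin{equation*}
V^h = \E^{\pi^{h,\sync}}\!\left[h\sum_{k=1}^N c_{kh}(X^h_{kh}, \bar X^h_{kh})\right]\!.
\end{equation*}

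For step (iii), \Cref{ass:it-3} applied under $\pi^{h,\sync}$ yields $L^p$ convergence of the interpolated schemes to $(X, \bar X)$ together with uniform-in-$h$ $p$-moment bounds. Combined with the polynomial growth \eqref{eq:poly-growth} of $c$, a Vitali convergence argument together with the Riemann-sum interpretation of $h\sum_k c_{kh}(\cdot,\cdot)$ produces
\begin{equation*}
V^h \longrightarrow \E^{\pi^\sync}\!\left[\int_0^T c_t(\omega_t, \bar\omega_t) \D t\right]\!,
\end{equation*}
where $\pi^\sync = \Law(X, \bar X)$ is the continuous-time synchronous coupling.

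Step (iv) is the main obstacle. The inequality $V := \inf_{\pi \in \cplba(\mu,\nu)} \E^\pi[\int_0^T c_t(\omega_t,\bar\omega_t) \D t] \leq \E^{\pi^\sync}[\int_0^T c_t(\omega_t,\bar\omega_t) \D t]$ is immediate since $\pi^\sync \in \cplba(\mu,\nu)$. For the reverse inequality, interpret $\mu^h, \nu^h$ as laws on $\Omega$ via piecewise linear interpolation; \Cref{ass:it-3} furnishes $\AW_p(\mu^h, \mu) \to 0$ and $\AW_p(\nu^h, \nu) \to 0$ (using the synchronous couplings as witnesses). A continuity property of the bicausal transport value with respect to $\AW_p$-convergence of the marginals, in the spirit of \citet{BaBaBeEd19a} and relying on \Cref{ass:cost} for uniform integrability, then forces the discrete infima $V^h$ to converge to $V$. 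Combined with step (iii), this yields $V = \E^{\pi^\sync}[\int_0^T c_t(\omega_t,\bar\omega_t) \D t]$, proving that $\pi^\sync$ attains the RHS infimum.
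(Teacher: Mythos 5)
Your steps (i)--(iii) track the paper's argument: optimality of the Knothe--Rosenblatt rearrangement for the discrete problem via \Cref{prop:kr-optimal} under \Cref{ass:it-1}, identification of that rearrangement with $\Law(X^h,\bar X^h)$ under a common driving Brownian motion via \Cref{rem:kr-equivalent-def} and \Cref{ass:it-2}, and convergence of the values along the synchronous coupling via \Cref{ass:it-3}. Step (iv) is exactly where the paper delegates to \cite[Proposition 2.6, Corollary 2.7]{BaKaRo22}, and it is also where your argument has a genuine gap.

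First, the claim that \Cref{ass:it-3} ``furnishes $\AW_p(\mu^h,\mu)\to 0$'' with the synchronous coupling as witness is not warranted. The synchronous coupling of the piecewise-linearly interpolated scheme $\hat X^h$ with the solution $X$ (both driven by the same $W$) is in general \emph{not} bicausal: for $t\in(kh,(k+1)h)$ the interpolated path on $[0,t]$ already determines $X^h_{(k+1)h}$, which depends on Brownian information over $(t,(k+1)h]$ and is therefore not conditionally independent of $\F^X_T$ given $\F^X_t$. \Cref{ass:it-3} only gives $L^p$, hence $\W_p$, convergence, not adapted Wasserstein convergence. Second, even granting $\AW_p$-convergence of the interpolated marginals, the discrete value $V^h=\inf_{\pi^h}\E^{\pi^h}\!\left[h\sum_k c_{kh}(x_k,\bar x_k)\right]$ is not the continuous-time bicausal value $V_c(\hat\mu^h,\hat\nu^h)$ between the interpolated measures, since the latter integrates the interpolated (not grid) path values; an $\AW_p$-continuity result for $V_c$ would therefore control $V_c(\hat\mu^h,\hat\nu^h)$, not $V^h$, and an additional Riemann-sum step is needed to relate the two.

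The intended argument sidesteps both issues by discretising couplings rather than marginals. Given any $\pi\in\cplba(\mu,\nu)$, \cite[Proposition 2.2]{BaKaRo22} represents $\pi=\Law(X,\bar X)$ with $(X,\bar X)$ driven by some $\rho$-correlated Brownian motion $(W,\bar W)$. Running the schemes with that same correlated driver produces, by the structure in \Cref{ass:it-2}, a coupling $\pi^h\coloneqq\Law(X^h,\bar X^h)\in\cplba(\mu^h,\nu^h)$. Applying \Cref{ass:it-3} with this fixed driver gives
\begin{equation*}
\E^{\pi^h}\!\left[h\sum_{k}c_{kh}(X^h_{kh},\bar X^h_{kh})\right]\!\longrightarrow\E^{\pi}\!\left[\int_0^T c_t(\omega_t,\bar\omega_t)\,\D t\right]\!,
\end{equation*}
so $V^h\le\E^{\pi^h}[\cdot]\to\E^\pi[\cdot]$. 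Taking the infimum over $\pi$ yields $\limsup_h V^h\le V$, which combined with your step (iii) and the trivial inequality $V\le V^{\sync}$ gives $V=V^{\sync}=\lim_h V^h$. You should replace step (iv) with this coupling-discretisation argument.
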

	
	\begin{lemma}\label{lem:stability}
		Let $b, \bar b, b^N, \bar b^N \colon [0, T] \times \R \to \R$ and $\sigma, \bar \sigma, \sigma^N, \bar \sigma^N \colon [0, T] \times \R \to [0, \infty)$ be measurable functions with uniform linear growth in space. For each $t \in [0, T]$, suppose that  $b_t, \bar b_t, \sigma_t, \bar \sigma_t$ are continuous in space and that the following convergence holds uniformly on compact sets as $N \to \infty$:
		\begin{equation}
			b^N_t \to b_t, \; \bar b^N_t \to \bar b_t, \; \sigma^N_t \to \sigma_t, \; \bar \sigma^N_t \to \bar \sigma_t.
		\end{equation}
		Suppose moreover that, for coefficients $(b, \sigma)$, $(\bar b, \bar \sigma)$, there exist unique strong solutions $(X, \bar X)$ of the SDEs \eqref{eq:sde-correlated}, and that for coefficients $(b^N, \sigma^N)$, $(\bar b^N, \bar \sigma^N)$, $N \in \N$, there exist unique strong solutions $(X^N, \bar X^N)$ of the SDEs \eqref{eq:sde-correlated}, for each $N \in \N$. Write $\mu = \Law(X), \nu = \Law(\bar X)$ and $\mu^N = \Law(X^N), \nu^N = \Law(\bar X^N)$.
		
		Let $c \colon [0, T] \times \R \times \R \to \R$ satisfy \Cref{ass:cost} and suppose that, for each $N \in \N$, $\pi^\sync_{\mu^N, \nu^N}$ attains the infimum $V_c(\mu^N, \nu^N)$ in \eqref{eq:bcot-cont}.
		Then
		\begin{equation}
			\lim_{N \to \infty} V_c(\mu^N, \nu^N) = V_c(\mu, \nu),
		\end{equation}
		and $\pi^\sync_{\mu, \nu}$ attains the infimum $V_c(\mu, \nu)$.
	\end{lemma}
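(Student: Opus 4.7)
The plan is to reduce the claim to a classical $L^p$-stability result for one-dimensional SDEs driven by a common Brownian motion, and then to exploit the representation of bicausal couplings from \citep[Proposition 2.2]{BaKaRo22}: every $\pi \in \cplba(\mu, \nu)$ has the form $\Law(X, \bar X)$ where $(X, \bar X)$ solves \eqref{eq:sde-correlated} driven by some adapted $\rho$-correlated Brownian motion $(W, \bar W)$. Running the same $\rho$ through the perturbed systems with coefficients $(b^N, \sigma^N, \bar b^N, \bar \sigma^N)$ produces a canonical family of bicausal approximations $\pi^N \in \cplba(\mu^N, \nu^N)$, and the proof hinges on showing that $\E^{\pi^N}[\int_0^T c_t \di t] \to \E^\pi[\int_0^T c_t \di t]$ for every such $\pi$.

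The key input is the following stability statement: for any Brownian motion $W$, the solution $X^N$ of \eqref{eq:sde} driven by $W$ with coefficients $(b^N, \sigma^N)$ converges in $L^p(\Omega; C([0, T]; \R))$ to the solution $X$ driven by the same $W$ with coefficients $(b, \sigma)$, and analogously for the barred system. Under uniform linear growth, space-continuity of the limiting coefficients, uniform-on-compacts convergence, and the assumed pathwise uniqueness of the limits, this is the standard compactness-plus-identification argument in the spirit of Skorokhod and Gy\"ongy--Krylov: uniform moment bounds give tightness of $(X^N)$ in $C([0, T]; \R)$; every subsequential limit solves the limiting SDE driven by $W$; pathwise uniqueness pins the limit down; and uniform integrability of the $p$-th moments promotes weak convergence to $L^p$ convergence. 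Driving the perturbed pair by any $\rho$-correlated Brownian motion then yields joint $L^p$-convergence $(X^N, \bar X^N) \to (X, \bar X)$. Combined with the polynomial growth of $c$ in \Cref{ass:cost} and uniform-in-$N$ $L^q$-moment bounds of all orders (a further consequence of the linear growth), this gives
\begin{equation*}
\E^{\pi^N}\!\left[\int_0^T c_t(\omega_t, \bar \omega_t) \di t \right] \longrightarrow \E^{\pi}\!\left[\int_0^T c_t(\omega_t, \bar \omega_t) \di t \right].
\end{equation*}

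From here the two claims of the lemma follow in parallel. Specialising the display above to $\rho \equiv 1$, together with the assumed optimality of $\pi^\sync_{\mu^N, \nu^N}$ at each $N$, gives
\[
\lim_{N \to \infty} \inf_{\pi^N \in \cplba(\mu^N, \nu^N)}\E^{\pi^N}\!\left[\int_0^T c_t(\omega_t, \bar \omega_t) \di t\right] = \E^{\pi^\sync_{\mu, \nu}}\!\left[\int_0^T c_t(\omega_t, \bar \omega_t) \di t\right],
\]
which is an upper bound for the right-hand side infimum in the lemma since $\pi^\sync_{\mu, \nu}$ is admissible. Conversely, applying the convergence to a general $\pi \in \cplba(\mu, \nu)$ yields $\limsup_N V_c(\mu^N, \nu^N) \leq \E^\pi[\int_0^T c_t \di t]$; minimising over $\pi$ produces $\lim_N V_c(\mu^N, \nu^N) \leq V_c(\mu, \nu)$. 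The two bounds collapse to $V_c(\mu, \nu) = \E^{\pi^\sync_{\mu, \nu}}[\int_0^T c_t \di t]$, simultaneously establishing the limit of the infima and the optimality of the synchronous coupling in the limiting problem.

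The main obstacle is the $L^p$-stability step, since the coefficients are only continuous rather than Lipschitz and only grow linearly. Writing $\bar W = \int_0^\cdot \rho_s \di W_s + \int_0^\cdot \sqrt{1 - \rho_s^2} \di W^\perp_s$ for an independent Brownian motion $W^\perp$ lets one work on a single filtered probability space and reduces the task to standard tightness and weak-identification arguments combined with uniform integrability; the careful book-keeping of the $\rho$-dependence is what is being deferred to \Cref{app:prelim}.
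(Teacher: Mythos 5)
Your proof is correct and follows the same overall architecture as the paper's: use \citep[Proposition 2.2]{BaKaRo22} to parametrise bicausal couplings by correlated Brownian motions, build a canonical approximating coupling $\pi^N$ from any $\pi$ by running the same correlation $\rho$ through the perturbed system, pass to the limit in the cost integral, then combine the specialisation $\rho \equiv 1$ with the general-$\pi$ upper bound. The one place you deviate is the stability input. The paper simply cites the weak-convergence stability theorem for martingale problems, \citet[Theorem 11.1.4]{StVa79}, applied to the two-dimensional SDE, which gives weak convergence $\pi^N \rightharpoonup \pi$; combined with uniform-in-$N$ moment bounds (from linear growth, via \citet[Lemma 3.8]{GiSk79}) this yields Wasserstein-$p$ convergence and hence convergence of the cost. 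You instead sketch a Gy\"ongy--Krylov-type argument for strong convergence in $L^p$ on a fixed probability space. That is a stronger intermediate conclusion and is also valid under the given hypotheses (tightness from moment bounds, identification from locally uniform convergence plus continuity of the limiting coefficients, then pathwise uniqueness pins the limit down). One small imprecision to note: your phrase ``uniform integrability of the $p$-th moments promotes weak convergence to $L^p$ convergence'' skips a step --- one first needs convergence in probability (which is exactly what the Gy\"ongy--Krylov criterion supplies, or what one gets by noting that weak convergence of the \emph{joint} law of $(X^N, X)$ to a distribution concentrated on the diagonal implies $|X^N - X| \to 0$ in probability), and only then does uniform integrability upgrade to $L^p$. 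The paper's Stroock--Varadhan route is more economical because it stays at the level of laws and never requires producing the limit on a common space, whereas your route gives a slightly cleaner picture of the synchronous coupling converging as a stochastic process.
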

	
	Under the following assumption on both $(b, \sigma)$ and $(\bar b, \bar \sigma)$, the unique strong solutions of \eqref{eq:sde-correlated} and their corresponding monotone Euler--Maruyama schemes satisfy \Cref{ass:comon-scheme} (cf.~\Cref{ex:comon-scheme}) and we can apply \Cref{prop:general-optimality}.
	
	\begin{assumption}\label{ass:lipschitz}
		Suppose that $(b, \sigma)$ satisfy the following:
		\begin{enumerate}[label = (\roman*)]
			\item $t \mapsto b_t(x), t \mapsto \sigma_t(x)$ are Lipschitz uniformly in $x \in \R$;
			\item $x \mapsto b_t(x), x \mapsto \sigma_t(x)$ are Lipschitz uniformly in $t \in [0, T]$.
		\end{enumerate}
	\end{assumption}
	
	\begin{proposition}\label{prop:time-dependent-lipschitz}
		Suppose that the coefficients $(b, \sigma)$ and $(\bar b, \bar \sigma)$ of \eqref{eq:sde-correlated} satisfy \Cref{ass:lipschitz} and write $\mu, \nu$ for the laws of the respective strong solutions $X$, $\bar X$. Let $c \colon [0, T] \times \R \times \R \to \R$ satisfy \Cref{ass:cost}. Then the synchronous coupling attains the value $V_c(\mu, \nu)$ of the bicausal optimal transport problem \eqref{eq:bcot-cont}.
	\end{proposition}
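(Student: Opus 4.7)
The plan is to invoke \Cref{prop:general-optimality} by constructing appropriate numerical schemes $X^h,\bar X^h$ satisfying \Cref{ass:comon-scheme}. Following \Cref{ex:comon-scheme}, I take the monotone Euler--Maruyama scheme with truncated Brownian increments, adapted to time-dependent coefficients: for $N \in \N$ with $N > T$ and $h = T/N$, define $X^h_0 = x_0$ and, for $k \in \{0, \dotsc, N - 1\}$,
\[
X^h_{(k + 1)h} = X^h_{kh} + b_{kh}(X^h_{kh})\,h + \sigma_{kh}(X^h_{kh})\,\Delta W^h_{k+1},
\]
with $\bar X^h$ defined analogously from $(\bar b, \bar \sigma, \bar W)$. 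Items (i)--(ii) of \Cref{ass:comon-scheme} follow from the existence of unique strong solutions of \eqref{eq:sde-correlated} under \Cref{ass:lipschitz}. Item (iii) is satisfied with $\phi(h, k, x, w) \coloneqq x + b_{kh}(x) h + \sigma_{kh}(x) w$, $\bar \phi$ defined analogously, and $f(h, k, W) \coloneqq \Delta W^h_{k+1}$; since $\sigma, \bar \sigma \geq 0$, both $\phi$ and $\bar \phi$ are non-decreasing in $w$, and in particular co-monotone in their final argument.

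For item (iv), I would verify stochastic monotonicity of each scheme. Writing $L$ for a common space Lipschitz constant, uniform in time, of $b, \sigma, \bar b, \bar \sigma$ from \Cref{ass:lipschitz}, one has for $x \geq x'$ and $|w| \leq A_h$,
\[
\phi(h, k, x, w) - \phi(h, k, x', w) \geq (x - x')\bigl(1 - L h - L A_h\bigr),
\]
and analogously for $\bar \phi$. Since $A_h = 4\sqrt{-h \log h} \to 0$ as $h \to 0$, the right-hand side is non-negative for all $h$ sufficiently small, uniformly in $k$. Induction on $k$ then shows that the transition kernels of $X^h$ (and of $\bar X^h$) are stochastically increasing, so the two processes are stochastically co-monotone in the sense of \Cref{def:stoch-dom}, giving item (iv) for $N$ larger than some $N_0$.

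The main technical obstacle is item (v), strong $L^p$ convergence. Classical Euler--Maruyama results (cf.~\citet{KlPl92}) give pointwise $L^p$ convergence of rate $\sqrt h$ for the scheme driven by the full Brownian motion under \Cref{ass:lipschitz}, from which the time-averaged convergence required in item (v) follows. To pass from full to truncated Brownian increments, one notes that the two schemes coincide on the events $\bigl\{\max_{t \in [kh, (k+1)h]} |W_t - W_{kh}| \leq A_h\bigr\}$, whose complements have total probability of order $O(h^c)$ for some $c > 0$ by a standard Gaussian tail bound on the Brownian running maximum (cf.~\citet[\S 2.1]{MiReTr02}); combining this with uniform $L^p$-moment bounds on both the full and the truncated schemes via a Cauchy--Schwarz estimate transfers the convergence to the truncated version. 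The resulting argument parallels \citet[Proposition 3.13]{BaKaRo22}, with only cosmetic modifications to accommodate the time-dependence permitted by \Cref{ass:lipschitz}. With all items of \Cref{ass:comon-scheme} verified for both pairs of coefficients, \Cref{prop:general-optimality} yields that $\pi^\sync_{\mu,\nu}$ attains $V_c(\mu,\nu)$.
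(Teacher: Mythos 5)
Your proposal takes the same route as the paper: instantiate the monotone Euler--Maruyama scheme (with truncated increments, now allowing time-dependent coefficients), verify \Cref{ass:comon-scheme} item by item, and invoke \Cref{prop:general-optimality}. The paper simply cites \citep[Lemma 3.11, Lemma 3.12, Proposition 3.13]{BaKaRo22} for the verification, while you spell out the monotonicity estimate and sketch the passage from the classical to the truncated Euler--Maruyama scheme via the Gaussian tail bound on the running maximum and a Cauchy--Schwarz argument; both arguments are correct and essentially equivalent.
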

	
	Combining \Cref{prop:time-dependent-lipschitz} with the stability result \Cref{lem:stability} leads to the following generalisation of \cite[Theorem 1.3, Remark 3.27]{BaKaRo22} to the case of both time-dependent coefficients and cost functional.

	\begin{assumption}\label{ass:regular}
		Suppose that $(b, \sigma)$ satisfy the following:
		\begin{enumerate}[label = (\roman*)]
			\item $b$ and $\sigma$ are continuous;
			\item $b$ and $\sigma$ have linear growth in $x$;
			\item pathwise uniqueness holds for the SDE \eqref{eq:sde} with coefficients $(b, \sigma)$.
		\end{enumerate}
	\end{assumption}
	
	\begin{corollary}\label{cor:regular}
		Suppose that the coefficients $(b, \sigma)$ and $(\bar b, \bar \sigma)$ of \eqref{eq:sde-correlated} satisfy \Cref{ass:regular}. Then there exist unique strong solutions $X$, $\bar X$ of the SDEs \eqref{eq:sde-correlated}, and we let $\mu, \nu$ denote their laws.
		Further, let $c \colon [0, T] \times \R \times \R \to \R$ satisfy \Cref{ass:cost}. Then the synchronous coupling attains the value $V_c(\mu, \nu)$ of the bicausal optimal transport problem \eqref{eq:bcot-cont}.
	\end{corollary}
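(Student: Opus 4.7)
The plan is to reduce \Cref{cor:regular} to \Cref{prop:time-dependent-lipschitz} via the approximation result \Cref{lem:stability}. The existence of unique strong solutions $X, \bar X$ under \Cref{ass:regular} is essentially classical: continuity together with linear growth yields weak existence via the Skorokhod representation applied to Euler approximations (or via a standard tightness argument), and pathwise uniqueness upgrades this to strong existence and uniqueness by Yamada--Watanabe. So the nontrivial content is the optimality statement.

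To obtain optimality, I would construct approximating coefficients $(b^N, \sigma^N)$ and $(\bar b^N, \bar \sigma^N)$ by standard mollification in the spatial variable (and, if needed, in time), for instance $b^N_t(x) = (b_t \ast \rho_N)(x)$ with $\rho_N$ a smooth compactly supported mollifier on $\R$, and analogously for the other coefficients, taking care that $\sigma^N, \bar \sigma^N \ge 0$. A short calculation shows that such mollified coefficients inherit the linear growth bound of the originals uniformly in $N$, satisfy \Cref{ass:lipschitz} (they are smooth, hence Lipschitz in $x$ uniformly on compact time intervals, and one can mollify in $t$ as well to obtain Lipschitz continuity in time), and converge to $(b, \sigma)$, $(\bar b, \bar \sigma)$ uniformly on compact subsets of $[0,T] \times \R$ by continuity of the original coefficients. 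Under \Cref{ass:lipschitz}, unique strong solutions $X^N, \bar X^N$ of the corresponding SDEs \eqref{eq:sde-correlated} exist, and \Cref{prop:time-dependent-lipschitz} gives that $\pi^\sync_{\mu^N, \nu^N}$ attains $V_c(\mu^N, \nu^N)$.

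Now I would invoke \Cref{lem:stability}: with the convergence $b^N \to b$, $\bar b^N \to \bar b$, $\sigma^N \to \sigma$, $\bar \sigma^N \to \bar \sigma$ uniformly on compacts, the uniform linear growth, and the optimality of $\pi^\sync_{\mu^N, \nu^N}$ for each $N$, the lemma yields convergence of the bicausal values and optimality of $\pi^\sync_{\mu, \nu}$ for the limit problem. This delivers exactly the conclusion of \Cref{cor:regular}.

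The main obstacle I expect is bookkeeping rather than a conceptual step: verifying that the hypotheses of \Cref{lem:stability} are satisfied by the mollified coefficients. In particular, one needs the uniform linear growth constant to be controlled independently of $N$ (which follows since mollification with a probability kernel does not inflate linear growth), and one needs $\sigma^N, \bar \sigma^N$ to remain nonnegative (which is automatic for mollification of nonnegative functions). A minor subtlety is that \Cref{ass:lipschitz} requires Lipschitz continuity in $t$, so the mollification should be performed in both variables, for which one extends $b, \sigma$ continuously off $[0,T]$ (e.g.\ by reflection) before convolving.
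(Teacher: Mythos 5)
Your proof takes essentially the same route as the paper: establish strong well-posedness via weak existence plus Yamada--Watanabe, approximate $(b, \sigma)$ and $(\bar b, \bar \sigma)$ by globally Lipschitz coefficients for which \Cref{prop:time-dependent-lipschitz} applies, and pass to the limit via \Cref{lem:stability}.

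There is, however, a gap in the approximation step. You claim that spatial mollification of continuous, linearly growing coefficients yields coefficients satisfying \Cref{ass:lipschitz} ``because they are smooth, hence Lipschitz in $x$.'' Smoothness only gives \emph{local} Lipschitz continuity. A continuous function with linear growth can have an a.e.\ derivative that is unbounded, and since $(\sigma_t \ast \rho_N)'(x) = (\sigma_t \ast \rho_N')(x)$ is an average of $\sigma_t$ over a window of fixed radius $\sim 1/N$, the mollified derivative can grow without bound in $|x|$. The mollification is then smooth but \emph{not} globally Lipschitz, so it does not in general satisfy \Cref{ass:lipschitz}, which requires a global-in-$x$ Lipschitz constant uniform in $t$. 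The repair is standard and cheap: after mollifying, also truncate the spatial argument, e.g.\ set $\hat\sigma^N_t(x) = \sigma^N_t\bigl((-N)\vee x \wedge N\bigr)$ and likewise for the other coefficients. This preserves nonnegativity, uniform linear growth, and local uniform convergence to the original coefficients, while making each approximant bounded and hence globally Lipschitz in $x$ (with constant possibly growing in $N$, which is harmless). With this modification, your argument is correct and coincides with the paper's.
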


\section{Discontinuous drift with exponential growth}\label{sec:growth-disc}

	Consider the SDE \eqref{eq:sde} with time-homogeneous coefficients. We allow the drift coefficient $b$ to exhibit both \emph{discontinuities} and \emph{exponential growth}. We will impose only mild piecewise continuity assumptions on $b$, and we will assume that the diffusion coefficient $\sigma$ is globally Lipschitz continuous and that $\sigma$ is non-zero at the points of discontinuity of $b$. In this setting we will prove:
	\begin{itemize}
		\item existence and uniqueness of strong solutions of \eqref{eq:sde};
		\item strong $L^p$-convergence of a transformed semi-implicit Euler--Maruyama scheme;
		\item optimality of the synchronous coupling between the laws of two such SDEs for the problem \eqref{eq:bcot-cont}.
	\end{itemize}
	To the best of our knowledge, this is the first work to treat SDEs with both discontinuous and exponentially growing drift. Precisely, we make the following assumptions on the coefficients $(b, \sigma)$.
	
	\begin{assumption}\label{ass:growth-disc}
		Let $(b, \sigma)$ be time-homogeneous; i.e.~$b_t \equiv b \colon \R \to \R$ and $\sigma_t \equiv \sigma \colon \R \to [0, \infty)$, for all $t \in [0, T]$. Suppose that there exists $m \in \N \cup \{0\}$ and $\xi_1$, $\dotsc\,$,~$\xi_m \in \R$, with $- \infty = \xi_0 < \xi_1 < \xi_1 < \dotsc, \xi_m < \xi_{m +1} = \infty$, and there exist constants $L_b$, $K_b$, $L_\sigma$, $\gamma$, $\eta \in (0, \infty)$ such that $(b, \sigma)$ satisfy the following:
		\begin{enumerate}[label = (A\arabic*)]
			\item \label[ass-growth-disc]{it:A1} on each interval $(\xi_k, \xi_{k + 1})$, $k \in \{0, \dotsc, m\}$,
			\begin{enumerate}[label = (\roman*)]
				\item \label[ass-growth-disc-A]{it:A1ii} $b$ is one-sided Lipschitz; i.e.~$(x - y)(b(x) - b(y)) \leq L_b|x - y|^2$, for all $x, y \in (\xi_k, \xi_{k + 1})$,
				\item \label[ass-growth-disc-A]{it:A1iii} $b$ satisfies the following exponential growth and local Lipschitz condition: for all $x, y \in (\xi_k, \xi_{k + 1})$,
					\begin{equation}|b(x) - b(y)| \leq K_b \bigl(\exp\{\gamma |x|^\eta\}+\exp\{\gamma |y|^\eta\}\bigr)|x - y|;
					\end{equation}
			\end{enumerate}
			\item \label[ass-growth-disc]{it:A2} $\sigma$ is globally Lipschitz; i.e.~$|\sigma(x) - \sigma(y)| \leq L_\sigma |x - y|$ for all $x, y \in \R$;
			\item \label[ass-growth-disc]{it:A3} $\sigma(\xi_k) \neq 0$, for all $k \in \{1, \dotsc, m\}$.
		\end{enumerate}
	\end{assumption}
		
	\begin{remark}\label{rem:non-degeneracy}
		We note that we do not impose any uniform non-degeneracy condition on the diffusion coefficient $\sigma$. This is in contrast to the setting of \citet{DaGeLe23}, for example. However, in the case of a discontinuous drift, \cite[Example 4.2]{LeSzTh15} shows that well-posedness may fail without \Cref{it:A3}.
	\end{remark}
	
	\begin{example}\label{ex:super-linear}
		Consider the introductory example from \citep{HuJeKl11}:
		\begin{equation}\label{eq:ex-super-linear}
			\ds X_t = - X_t^3 \D t + \ds W_t, \quad X_0 = x_0 \in \R,
		\end{equation}
		and a modification of the example in \citep[Section 2]{Sz21}:
		\begin{equation}\label{eq:ex-discontinuous}
			\ds X_t = \!\left(\frac12 - 2\sign(X_t - 1)\right)\! \D t + |X_t| \D W_t, \quad X_0 = x_0 \in \R.
		\end{equation}
		A more practical example is the value of a company in a Black--Scholes market, with parameters $\mu, \rho \in (0, \infty)$, that pays out dividends following a threshold strategy; i.e.\ dividends are paid out at a rate $u \in (0, \infty)$ whenever the value of the company exceeds a certain threshold level $\ell \in (0, \infty)$:
		\begin{equation}\label{eq:black-scholes}
			\ds X_t = \bigl(\mu-u \ind{X_t\ge \ell}\bigr) X_t \D t + \rho X_t \D W_t, \quad X_0 = x_0 \in (0,\infty).
		\end{equation}
		The coefficients of the SDEs \eqref{eq:ex-super-linear}, \eqref{eq:ex-discontinuous}, and \eqref{eq:black-scholes} each satisfy \Cref{ass:growth-disc}.
	\end{example}
	\begin{remark}\label{rem:growth-disc}~
		\begin{enumerate}[label = (\roman*)]
			\item In the case that $m = 0$, the drift coefficient $b$ is continuous, and there is no non-degeneracy condition on $\sigma$. In this setting, convergence in probability and almost sure convergence of the explicit Euler--Maruyama scheme were shown by \citet{GyKr96} and \citet{Gy98}, respectively. However, \citet[Theorem 1]{HuJeKl11} shows that, for the SDE \eqref{eq:ex-super-linear}, this scheme has unbounded moments and diverges in the strong $L^p$ sense. Two approaches to recover moment bounds and strong convergence have been taken in the literature.
			\begin{enumerate}
				\item Tamed numerical schemes, in which polynomially growing coefficients are appropriately rescaled, have been studied, for example, by \citet{HuJeKl12, Sa13}. These are explicit schemes, which are computationally efficient. However, the technique of taming does not readily yield a scheme satisfying the monotonicity property \Cref{ass:it-1} and to our knowledge has not been applied to exponentially growing coefficients.
				\item For a semi-implicit Euler--Maruyama scheme, \citet{Hu96} proves $L^2$-convergence of order $1/2$ at the terminal time. When $b$ has at most polynomial growth, \citet{HiMaSt02} prove uniform-in-time $L^2$-convergence at the same rate. Under additional assumptions, \citet{MaSz13a,MaSz13b,HuJe15} also study strong convergence of this scheme. However, under \Cref{ass:growth-disc} with $m = 0$, we are not aware of any existing results on uniform-in-time $L^p$-convergence.
			\end{enumerate}
			\item Strengthening \Cref{it:A1} to piecewise Lipschitz continuity, we are in the setting of \citet{LeSz16,LeSz17}, who were the first to consider transformation-based schemes for SDEs with piecewise-Lipschitz drift. Under these assumptions, the authors proved strong well-posedness of the SDEs and strong $L^p$-convergence of a transformation-based Euler--Maruyama scheme. \citet{LeSz18,MuYa20} also prove strong convergence rates for the classical Euler--Maruyama scheme. Higher-order transformation-based schemes and adaptive schemes have been introduced in \citet{MuYa22,PrScSz22,Ya21,NeSzSz19}. \citet{PrSz21,PrSzXu21} additionally allow for processes with jumps. Overviews of this line of research can be found in \citet{LeSz17b,Sz21}.
			\item Recent results of \citet{MuSaYa22, SpSz23}, as well as \citet{HuGa22}, prove existence and uniqueness of strong solutions and convergence of tamed Euler--Maruyama schemes for SDEs with drift coefficients that are both discontinuous and superlinearly growing. However, these results impose a polynomial growth condition on the drift coefficient, whereas we allow for exponential growth. We note, however, that \cite{MuSaYa22} allows for both the drift and diffusion coefficient to have polynomial growth.
		\end{enumerate}
	\end{remark}
	
	\subsection{A transformed SDE}\label{sec:transformation}
		In order to remove the discontinuities form the drift coefficient of the SDE \eqref{eq:sde}, we define a transformation of space $G \colon \R \to \R$ as in \citet{LeSz17}.
		
		First we define $\phi\colon \R \to \R$ by
		\begin{equation}
			\phi(u) =
			\begin{cases}
				(1 + u)^3(1 - u)^3 & |u| \leq 1,\\
				0 & |u|>1.
			\end{cases}
		\end{equation}
		Then, for constants $c_0 \in (0, \infty)$ and $\alpha_1$, $\alpha_2$, $\dotsc\,$, $\alpha_m \in \R \setminus \{0\}$, define $\bar \phi_k \colon \R \to \R$, for each $k \in \{1, \dotsc, m\}$, by
		\begin{equation}\label{eq:phi-bar}
			\bar \phi_k (x) \coloneqq \phi \!\left(\frac{x - \xi_k}{c_0}\right)\!(x - \xi_k)|x - \xi_k|, \quad \text{for} \; x \in \R,
		\end{equation}
		and define $G \colon \R \to \R$ by
		\begin{equation}\label{eq:G-def}
			G(x) \coloneqq x + \sum_{k = 1}^m \alpha_k \bar \phi_k(x), \quad \text{for} \; x \in \R.
		\end{equation}
		Before fixing the constants in the definition of $G$, we make the following remark.
		
		\begin{remark}\label{rem:lip-compacts}
			On the finite intervals $(\xi_k, \xi_{k+1})$, for $k \in \{1, \dotsc, m - 1\}$, the local Lipschitz constant in \Cref{it:A1ii} is bounded by $2 K_b \exp\{\gamma (|\xi_k| \vee |\xi_{k+1}|)^\eta\} < \infty$, and thus $b$ extends to a Lipschitz function on the compact interval $[\xi_k, \xi_{k + 1}]$. In particular, there exist finite one-sided limits $b(\xi_k+) \coloneqq \lim_{x \searrow \xi_k}b(x)$ and $b(\xi_{k+1}-) \coloneqq \lim_{x \nearrow \xi_{k + 1}}b(x)$. Similarly, $b$ extends to a Lipschitz function on $[\xi_1 - c, \xi_1]$ and $[\xi_m, \xi_m + c]$, for any $c \in \R$, and the one-sided limits $b(\xi_1-), b(\xi_m+)$ exist and are finite.
		\end{remark}
		
		By \Cref{rem:lip-compacts} and \Cref{it:A3}, for each $k \in \{1, \dotsc, m\}$ we can now define
		\begin{equation}\label{eq:alpha}
			\alpha_k \coloneqq \frac12 \sigma(\xi_k)^{-2}\bigl(b(\xi_k -) - b(\xi_k +)\bigr),
		\end{equation}
		and choose
		\begin{equation}\label{eq:c_0}
			c_0 < \min_{k \in \{1, \dotsc, m\}}\bigg\{\frac{1}{6|\alpha_k|}\bigg\} \wedge \min_{k \in \{1, \dotsc, m - 1\}}\bigg\{\frac 12 (\xi_{k + 1} - \xi_{k})\bigg\}.
		\end{equation}
		We will see that this choice of constants ensures that the transformation $G$ acts only locally at each discontinuity, is strictly increasing, and the drift of the transformed SDE is continuous. We prove the following properties of the transformation $G$ in \Cref{app:proofs-transform}.
		
		\begin{lemma}\label{lem:phi-bar}
			For each $k \in \{1, \dotsc, m\}$, $\bar \phi_k$ defined in \eqref{eq:phi-bar} with $c_0$ chosen according to \eqref{eq:c_0} satisfies the following:
			\begin{itemize}
				\item $\bar \phi_k(x) = \bar \phi^\prime_k(x) = \bar \phi^{\prime \prime}_k(x) = 0$ for $|x - \xi_k| \geq c_0$,
				\item $\bar \phi_k(\xi_k) = \bar \phi_k^\prime(\xi_k) = 0$, $\bar \phi_k^{\prime \prime}(\xi_k -) = -2$, $\bar \phi_k^{\prime \prime}(\xi_k +) = 2$,
				\item $\bar \phi_k^\prime(x) \in [-6 c_0, 6c_0]$, for all $x \in \R$,
				\item $\bar \phi_k \colon \R \to \R$ is Lipschitz, has Lipschitz first derivative, and has piecewise Lipschitz almost-everywhere second derivative with discontinuity point $\xi_k$.
			\end{itemize}
		\end{lemma}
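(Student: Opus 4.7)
The plan is to verify the four bullet points by direct computation based on the factorisation $\phi(u) = (1-u^2)^3$ and the substitution $y = x - \xi_k$, so that $\bar\phi_k(x) = \phi(y/c_0)\cdot y|y|$. Two building-block regularities drive everything: since $\phi(\pm 1) = \phi'(\pm 1) = \phi''(\pm 1) = 0$, the cutoff $\phi$ is $C^2$ on $\R$ when extended by zero; and $y \mapsto y|y|$ is $C^1$ with derivative $2|y|$ and second derivative $2\,\sign(y)$, continuous off $0$ with one-sided values $\pm 2$ at the origin. By the Leibniz rule, $\bar\phi_k$ is therefore $C^1$ on $\R$ and $C^2$ on $\R \setminus \{\xi_k\}$, with the only singularity of $\bar\phi_k''$ being a jump at $\xi_k$.

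The first bullet is immediate from the support of $\phi$: for $|y| \geq c_0$, all of $\phi(y/c_0), \phi'(y/c_0), \phi''(y/c_0)$ vanish, and hence so do $\bar\phi_k, \bar\phi_k', \bar\phi_k''$. For the second bullet, I would apply the product rule at $y = 0$, where both $y|y|$ and its derivative $2|y|$ vanish; then only the term $\phi(0)\cdot (y|y|)''(\pm)$ survives in $\bar\phi_k''(\xi_k\pm)$, giving $\pm 2$, while $\bar\phi_k(\xi_k) = \bar\phi_k'(\xi_k) = 0$ follows by direct substitution. The third bullet follows from the explicit formula
\[
\bar\phi_k'(x) = \tfrac{1}{c_0}\phi'(y/c_0)\, y|y| + 2\phi(y/c_0)\, |y|,
\]
which vanishes outside $|y| \leq c_0$. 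Substituting $y = c_0 u$ with $|u| \leq 1$, this collapses to $2c_0\,|u|(1-u^2)^2(1+2u^2)$, and the elementary bounds $|u| \leq 1$, $(1-u^2)^2 \leq 1$, $1 + 2u^2 \leq 3$ deliver $|\bar\phi_k'(x)| \leq 6c_0$ sharply.

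The fourth bullet then packages the preceding observations: $\bar\phi_k$ has a continuous, compactly supported first derivative, so it is globally Lipschitz; $\bar\phi_k'$ is continuous on $\R$ with a bounded almost-everywhere derivative on its compact support, so it too is Lipschitz; and $\bar\phi_k''$, being $C^\infty$ on each of $(\xi_k - c_0, \xi_k)$ and $(\xi_k, \xi_k + c_0)$ (a polynomial in $y$ times a derivative of $\phi$) and identically zero outside $[\xi_k - c_0, \xi_k + c_0]$, is piecewise Lipschitz almost everywhere. No substantive obstacle arises; the only care is in tracking the one-sided values at $\xi_k$, and the specific choice of $c_0$ in \eqref{eq:c_0} enters this lemma only through $c_0 > 0$, the finer size constraints being needed not here but later to ensure disjointness of cutoff regions and strict monotonicity of the full transformation $G$.
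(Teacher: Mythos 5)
Your overall strategy — direct computation organised around the factorisation $\phi(u)=(1-u^2)^3$ and the product rule for $y\mapsto y|y|$ — is exactly what the paper does (its proof is a terse ``direct computation''), and your treatment of the first, second and fourth bullets is correct. However, there is an algebra slip in the third bullet. Writing $u=(x-\xi_k)/c_0$ and using $\phi(u)=(1-u^2)^3$, $\phi'(u)=-6u(1-u^2)^2$, one gets
\[
u\phi'(u)+2\phi(u)=(1-u^2)^2\bigl(-6u^2+2(1-u^2)\bigr)=2(1-u^2)^2(1-4u^2),
\]
so that $\bar\phi_k'(x)=2c_0|u|(1-u^2)^2(1-4u^2)$, not $2c_0|u|(1-u^2)^2(1+2u^2)$ as you wrote. (Indeed, your expression fails a spot-check at $u=\tfrac12$: the true derivative vanishes there, yours does not.) Fortuitously this does not affect the claimed bound, since on $|u|\le 1$ we still have $|1-4u^2|\le 3$, giving $|\bar\phi_k'(x)|\le 6c_0$. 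But I would also retract the word ``sharply'': the three factors $|u|$, $(1-u^2)^2$, $|1-4u^2|$ cannot be simultaneously maximised, so $6c_0$ is a valid but non-sharp bound (which is all the lemma asserts and all that is needed for the choice of $c_0$ in \eqref{eq:c_0}). Your closing observation that \eqref{eq:c_0} enters this lemma only via $c_0>0$ is correct and a helpful clarification.
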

		
		\begin{lemma}[Cf.~{\citep[Lemma 2.2]{LeSz17}}]\label{lem:G-properties-growth-disc}
			The transformation $G$ defined by \eqref{eq:G-def}, \eqref{eq:alpha}, and \eqref{eq:c_0} satisfies the following:
			\begin{itemize}
				\item $G = \id$ on the set $\continuityset$,
				\item $G$ is strictly increasing with strictly increasing inverse $G^{-1}\colon \R \to \R$,
				\item $G, G^{-1}$ are bounded and Lipschitz with Lipschitz constants $L_G, L_{G^{-1}} \in (0, \infty)$,
				\item $G, G^{-1}$ have bounded Lipschitz first derivatives $G^\prime, (G^{-1})^\prime$, and have bounded piecewise Lipschitz almost-everywhere second derivatives $G^{\prime \prime}, (G^{-1})^{\prime \prime}$ with discontinuity points $\xi_1$, $\dotsc\,$,~$\xi_m$.
			\end{itemize}
		\end{lemma}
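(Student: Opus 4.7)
The plan is to verify each of the four bullet points as a direct consequence of \Cref{lem:phi-bar} together with the specific constants chosen in \eqref{eq:alpha} and \eqref{eq:c_0}. The key observation is that the choice $c_0 < \frac12(\xi_{k+1} - \xi_k)$ guarantees that the supports $[\xi_k - c_0, \xi_k + c_0]$ of $\bar\phi_k$ for distinct $k$ are pairwise disjoint, so at most one summand in the definition \eqref{eq:G-def} of $G$ is non-zero at any point $x \in \R$. The first bullet is then immediate: if $x \notin \bigcup_k(\xi_k - c_0, \xi_k + c_0)$, then $\bar\phi_k(x) = 0$ for every $k$ by \Cref{lem:phi-bar}, and hence $G(x) = x$.

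For strict monotonicity I would differentiate, obtaining $G'(x) = 1 + \sum_{k=1}^m \alpha_k \bar\phi_k'(x)$. By the disjoint-support observation together with the bound $|\bar\phi_k'| \le 6c_0$ from \Cref{lem:phi-bar}, at any $x \in \R$
\[
	|G'(x) - 1| \le |\alpha_k|\,|\bar\phi_k'(x)| \le 6 c_0 |\alpha_k|
\]
for the (at most one) index $k$ whose support contains $x$, and this quantity is strictly less than $1$ by the choice \eqref{eq:c_0}. Therefore there exist constants $0 < m_G < M_G < \infty$ such that $m_G \le G'(x) \le M_G$ for all $x$. In particular $G$ is strictly increasing, admits a strictly increasing inverse $G^{-1}$, and $G$ is Lipschitz with constant $L_G \coloneqq M_G$. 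By the inverse function theorem, $(G^{-1})'(y) = 1/G'(G^{-1}(y))$, so $G^{-1}$ is Lipschitz with constant $L_{G^{-1}} \coloneqq 1/m_G$. The ``bounded'' claim for $G$ and $G^{-1}$ I would read as the statement that $G - \id$ and $G^{-1} - \id$ are bounded: outside the union of the intervals $(\xi_k - c_0, \xi_k + c_0)$ we have $G - \id = 0$, and on each (bounded) such interval $|G - \id| = |\alpha_k \bar\phi_k|$ is bounded by $|\alpha_k| c_0^2$; then boundedness of $G^{-1} - \id$ follows from that of $G - \id$ together with the Lipschitz bound.

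For the last bullet, I would compute $G''(x) = \sum_{k=1}^m \alpha_k \bar\phi_k''(x)$ away from the points $\xi_k$. By \Cref{lem:phi-bar}, each $\bar\phi_k'$ is Lipschitz and each $\bar\phi_k''$ is bounded and piecewise Lipschitz almost everywhere (with a jump only at $\xi_k$), so $G'$ is Lipschitz and $G''$ is bounded and piecewise Lipschitz a.e. For the inverse, differentiating $(G^{-1})' = 1/(G' \circ G^{-1})$ gives
\[
	(G^{-1})''(y) = - \frac{G''(G^{-1}(y))}{G'(G^{-1}(y))^3},
\]
and since $G'$ is bounded away from $0$ and $G' \circ G^{-1}$ is Lipschitz (as the composition of the Lipschitz function $G'$ with the Lipschitz function $G^{-1}$), while $G''$ is bounded and piecewise Lipschitz a.e.\ with discontinuities only at the points $\{\xi_k\}$, the right-hand side is bounded, Lipschitz on each piece between consecutive images $G(\xi_k)$, and hence piecewise Lipschitz a.e.; likewise $(G^{-1})'$ is Lipschitz.

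The only mildly delicate point is making sure that the disjointness of the supports is used to reduce every estimate to a one-summand estimate, so that the single smallness condition $6c_0 |\alpha_k| < 1$ suffices, rather than a weaker cumulative one; everything else reduces to direct bookkeeping with \Cref{lem:phi-bar}.
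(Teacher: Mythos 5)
Your proposal is correct and follows essentially the same route as the paper's proof: exploit the pairwise-disjoint supports of the $\bar\phi_k$ (guaranteed by $c_0 < \tfrac12(\xi_{k+1}-\xi_k)$) to reduce every estimate to a single summand, bound $G'$ away from $0$ and $\infty$ via $6c_0|\alpha_k|<1$, and transfer all properties to $G^{-1}$ through the inverse function theorem; you simply fill in some computations (e.g.\ the explicit formula for $(G^{-1})''$) that the paper leaves implicit. Your reading of ``bounded'' as boundedness of $G-\id$ and $G^{-1}-\id$ is the correct interpretation, since $G=\id$ outside a compact set and so $G$ itself cannot be literally bounded.
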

		
		\begin{example}\label{ex:single-discontinuity}
			Suppose that $m = 1$; i.e.~there is only a single exceptional point $\xi \in \R$ at which $b$ is not Lipschitz.
			\Cref{fig:G} illustrates that $G$ is close to linear, increasing, and has Lipschitz first derivative and Lipschitz-almost-everywhere second derivative.
			\begin{figure}[h]
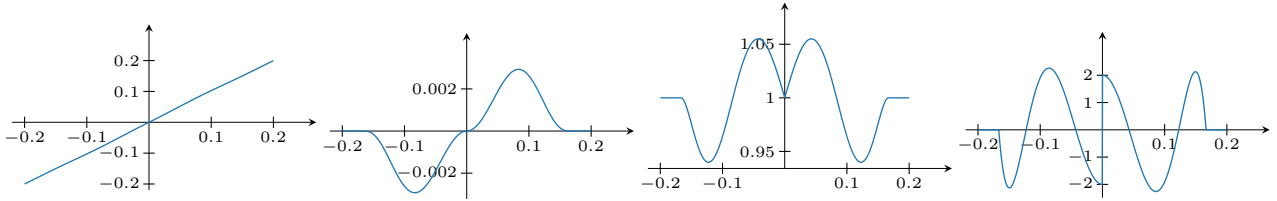

				\centering
				\input{G.pgf}
				\input{Gminus.pgf}
				\input{Gprime.pgf}
				\input{Gsecond.pgf}				\caption{From left to right: the functions $G$, $G-\operatorname{id}$, $G'$, and $G''$, for $\xi = 0$, $\alpha = 1$, $c = 1/6$.}
				\label{fig:G}
			\end{figure}
		\end{example}
		
		Supposing that there exists a strong solution $X$ of \eqref{eq:sde}, we define $Z_t \coloneqq G(X_t)$, for all $t \in [0, T]$. Formally, applying It\^o's formula, we find that $Z$ solves the SDE
		\begin{equation}\label{eq:sde-z}
			\D Z_t = \tilde b(Z_t) \D t + \tilde \sigma(Z_t) \D W_t, \quad Z_0 = G(x_0),
		\end{equation}
		where $\tilde b \colon \R \to \R$, $\tilde \sigma \colon \R \to [0, \infty)$ are defined by
		\begin{equation}\label{eq:transformed-coefficients}
			\begin{split}
				\tilde b & \coloneqq (b G^\prime) \circ G^{-1} + \frac12 (\sigma^2 G^{\prime \prime})\circ G^{-1},\\
				\tilde \sigma & \coloneqq (\sigma G^{\prime}) \circ G^{-1}.
			\end{split}
		\end{equation}
		Note that, by \Cref{lem:G-properties-growth-disc}, $\tilde b = b$ and $\tilde \sigma = \sigma$ on the set $\continuityset$. That is, the transformation only affects the coefficients on a neighbourhood of each discontinuity point.
		
		To prove regularity of the transformed coefficients, we will make use of the following lemma, which is analogous to \cite[Lemma 2.6]{LeSz16}. Again we postpone the proof to \Cref{app:proofs-transform}.
		
		\begin{lemma}\label{lem:piecewise-plus-continuity}
			Suppose that $f \colon \R \to \R$ satisfies \Cref{it:A1} and is continuous. Then $f$ satisfies \Cref{it:A1ii,it:A1iii} globally on $\R$.
		\end{lemma}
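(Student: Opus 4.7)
The plan is to use continuity of $f$ at the exceptional points $\xi_1, \dotsc, \xi_m$ to upgrade each of the three local properties \Cref{it:A1i,it:A1ii,it:A1iii} from the open intervals $(\xi_k, \xi_{k+1})$ to all of $\R$ via a telescoping/splitting argument at the $\xi_k$.

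For \Cref{it:A1i}, I first observe that \Cref{it:A1iii} implies $|f'(x)| \leq 2K_b \exp\{\gamma|x|^\eta\}$ almost everywhere on each open interval, by letting the two arguments of the Lipschitz inequality coincide. On any closed subinterval $[\xi_k, \xi_{k+1}]$ the derivative is therefore bounded and integrable, and combining absolute continuity on the open interval with dominated convergence and continuity of $f$ extends the identity $f(y) - f(x) = \int_x^y f'(t)\,\D t$ to $x, y \in [\xi_k, \xi_{k+1}]$, giving absolute continuity on the closed interval. Any compact subinterval of $\R$ meets only finitely many $\xi_k$, and the standard fact that finite concatenations of absolutely continuous functions agreeing at the junctions are absolutely continuous yields the global claim.

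For \Cref{it:A1ii,it:A1iii}, I would fix $x < y$ in $\R$ and enumerate $\xi_{k_1} < \dotsb < \xi_{k_M}$ as the discontinuity points strictly between $x$ and $y$. Setting $t_0 = x$, $t_{M+1} = y$, and $t_j = \xi_{k_j}$ otherwise, I telescope
\[
	f(y) - f(x) = \sum_{j = 0}^{M}\bigl(f(t_{j+1}) - f(t_j)\bigr).
\]
Each consecutive pair $(t_j, t_{j+1})$ lies in the closure of one of the open intervals, and since both \Cref{it:A1ii} and \Cref{it:A1iii} are closed conditions in $f$, continuity of $f$ lets these inequalities pass from the open interval to its closure. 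For \Cref{it:A1ii}, using that $(x - y)(f(x) - f(y)) \leq L_b (x - y)^2$ is equivalent, for $x < y$, to $f(y) - f(x) \leq L_b(y - x)$, summing the inequalities for each summand gives the global one-sided Lipschitz bound with the same constant $L_b$. For \Cref{it:A1iii}, the estimate $|t_j| \leq \max(|x|, |y|)$ for intermediate $j$ implies that each term is dominated by $2K_b(\exp\{\gamma|x|^\eta\} + \exp\{\gamma|y|^\eta\})(t_{j+1} - t_j)$; summing then yields the global exponential-growth Lipschitz inequality with constant $2K_b$.

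No substantial obstacle is anticipated: the argument is essentially careful bookkeeping. The only delicate point is passing the local inequalities from each open interval to its closure, which is immediate once one notes that these inequalities are closed under pointwise limits in $f$ and that the relevant one-sided limits of $f$ at the $\xi_k$ agree with $f(\xi_k)$ by continuity, cf.~\Cref{rem:limits-exist}.
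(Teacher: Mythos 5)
Your proof is correct and follows the same strategy as the paper: extend each of the three local properties from the open intervals $(\xi_k,\xi_{k+1})$ to their closures using continuity of $f$, then patch the finitely many pieces together. You spell out the details that the paper compresses — the derivative bound and dominated convergence for \Cref{it:A1i}, and the telescoping decomposition through the exceptional points for \Cref{it:A1ii,it:A1iii} (correctly noting that the multiplicative constant may double in \Cref{it:A1iii}) — but there is no substantive difference in approach from the argument in \Cref{app:proofs-transform}.
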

		
		We say that coefficients $(b, \sigma)$ satisfy the \emph{monotonicity condition} if there exists $\alpha, \beta \in (0, \infty)$ such that, for all $x \in \R$,
		\begin{equation}\label{eq:monotonicity}
			xb(x) \vee |\sigma(x)|^2 \leq \alpha + \beta |x|^2.
		\end{equation}
		We let $L^1_\loc(\R)$ denote the space of locally integrable functions on $\R$.
		The proof of the following properties of the transformed coefficients is once again found in \Cref{app:proofs-transform}.
		
		\begin{lemma}\label{lem:transformed-coefficients}
			Suppose that $(b, \sigma)$ satisfy \Cref{ass:growth-disc}. Then there exist constants $L_{\tilde b}$, $K_{\tilde b}$, $L_{\tilde \sigma}$, $\tilde \gamma$, $\tilde \eta$, $\alpha$, $\beta \in (0,\infty)$ such that the transformed coefficients $(\tilde b, \tilde \sigma)$ defined in \eqref{eq:transformed-coefficients} satisfy the following:
			\begin{enumerate}[label = (\roman*)]
				\item $\tilde b$ is one-sided Lipschitz; i.e.~$(x - y)(\tilde b(x) - \tilde b(y)) \leq L_{\tilde b}|x - y|^2$, for all $x, y \in \R$;
				\item $\tilde b$ satisfies the following exponential growth and local Lipschitz condition: for all $x,y\in\R$,
				\begin{equation}\label{eq:exp-growth-transformed}
					|\tilde b(x)- \tilde b(y)|\le K_{\tilde b} \bigl(\exp\{\tilde \gamma |x|^{\tilde \eta}\} + \exp\bigl\{\tilde \gamma|y|^{\tilde \eta}\bigr\}\bigr) |x - y|; 
				\end{equation}
				\item $\tilde b$ is locally absolutely continuous; i.e.\ there exists $\tilde b^\prime \in L^1_\loc(\R)$ such that, for any $x, y \in \R$,
					\begin{equation}
						\tilde b(y) - \tilde b(x) = \int_x^y \tilde b^\prime(r) \D r;
					\end{equation}
				\item $\tilde \sigma$ is globally Lipschitz; i.e.~$|\tilde \sigma (x) - \tilde \sigma(y)| \leq L_{\tilde \sigma} |x - y|$, for all $x, y \in \R$;
				\item $(\tilde b, \tilde \sigma)$ satisfy the monotonicity condition \eqref{eq:monotonicity} with constants $\alpha, \beta$.
			\end{enumerate}
		\end{lemma}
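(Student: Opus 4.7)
The strategy is to first establish the analogues of (i)--(iii) for the pre-composition $\hat b(x) \coloneqq b(x) G'(x) + \tfrac12 \sigma(x)^2 G''(x)$, so that $\tilde b = \hat b \circ G^{-1}$, and then transfer these properties by composition with the bi-Lipschitz map $G^{-1}$. The heart of the argument is the jump cancellation of $\hat b$ at each $\xi_k$: by \Cref{lem:phi-bar}, $G'(\xi_k) = 1$ and $G''(\xi_k+) - G''(\xi_k-) = 4\alpha_k$, so the jump of $bG'$ at $\xi_k$ equals $b(\xi_k+) - b(\xi_k-)$, while the definition \eqref{eq:alpha} of $\alpha_k$ forces the jump of $\tfrac12 \sigma^2 G''$ to be $b(\xi_k-) - b(\xi_k+)$. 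The two cancel, so $\hat b$ is continuous on $\R$.

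Next I verify \Cref{it:A1} for $\hat b$ on each open interval $(\xi_k, \xi_{k+1})$. Since $G'$ is Lipschitz and bounded by \Cref{lem:G-properties-growth-disc}, \Cref{lem:product-phi} gives \Cref{it:A1} for $bG'$. For the term $\tfrac12 \sigma^2 G''$, the support of $G''$ is contained in the compact set $\bigcup_k [\xi_k - c_0, \xi_k + c_0]$ on which $\sigma$ is bounded (by continuity on a compact set); moreover $G''$ restricted to $(\xi_k, \xi_{k+1})$ is Lipschitz and bounded, vanishing smoothly at the interior points $\xi_k + c_0$ and $\xi_{k+1} - c_0$ because $\phi''(\pm 1) = 0$. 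Hence $\tfrac12 \sigma^2 G''$ is Lipschitz and bounded on each such interval and trivially satisfies \Cref{it:A1}. Combined with continuity of $\hat b$, \Cref{lem:piecewise-plus-continuity} promotes these piecewise conditions to globally valid versions of (i)--(iii) for $\hat b$ on all of $\R$.

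To transfer to $\tilde b = \hat b \circ G^{-1}$, I use that $G^{-1}$ is Lipschitz (preserving absolute continuity) and that $G$ is strictly increasing with the chord bound $|G(x)-G(y)| \geq L_{G^{-1}}^{-1}|x-y|$. For one-sided Lipschitz, multiplying $(x-y)(\hat b(x)-\hat b(y)) \leq L_{\hat b}(x-y)^2$ by the nonnegative ratio $(G(x)-G(y))/(x-y)$ and using the chord bound yields $(z-w)(\tilde b(z)-\tilde b(w)) \leq L_{\hat b}L_{G^{-1}}(z-w)^2$. For the exponential growth local Lipschitz condition, the bound $|G^{-1}(z)| \leq L_{G^{-1}}|z| + |G^{-1}(0)|$ converts $\exp\{\gamma |G^{-1}(z)|^\eta\}$ into $\tilde C \exp\{\tilde\gamma|z|^{\tilde\eta}\}$, with constants absorbed into modified prefactors. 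Property (iv) follows by decomposing $G' = 1 + H$ with $H$ Lipschitz and of compact support, so $\sigma G' = \sigma + \sigma H$ is Lipschitz (the second summand because $\sigma$ is bounded on the support of $H$), and composition with Lipschitz $G^{-1}$ preserves Lipschitz continuity. Finally (v) follows from one-sided Lipschitz of $\tilde b$ applied at $y = 0$, giving $x\tilde b(x) \leq (L_{\tilde b} + \tfrac12)x^2 + \tfrac12 \tilde b(0)^2$, together with $|\tilde \sigma(x)|^2 \leq 2L_{\tilde \sigma}^2 x^2 + 2|\tilde\sigma(0)|^2$. The main obstacle is the careful verification of the jump cancellation at $\xi_k$ and ensuring that the piecewise Lipschitz structure of $G''$ combines with the possibly unbounded $\sigma^2$ to give a well-behaved product --- this is resolved precisely by the compact support of $G''$.
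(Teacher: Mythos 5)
Your proof is correct and follows essentially the same strategy as the paper: cancel the jumps of $bG' + \tfrac12\sigma^2 G''$ at each $\xi_k$ via the choice of $\alpha_k$, apply \Cref{lem:product-phi} and \Cref{lem:piecewise-plus-continuity} to promote the piecewise conditions to global ones, transfer to $\tilde b, \tilde\sigma$ by composing with the Lipschitz increasing map $G^{-1}$, and deduce (v) from (ii) and (iv). You are more explicit than the paper in the composition-transfer step (the chord-ratio argument for one-sided Lipschitz and the exponential re-parameterisation) and in the decomposition $\sigma G' = \sigma + \sigma(G'-1)$, which cleanly repairs the paper's imprecise phrase that $G'$ "is zero" on $\continuityset$ (it equals $1$ there; what vanishes there is $G'-1$, and that compact support is what makes $\sigma(G'-1)$ bounded and Lipschitz).
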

		
		The following result gives well-posedness of the transformed SDE, as well as moment bounds and one-step error bounds that will be used in the convergence analysis of the numerical scheme.
		
		\begin{proposition}\label{lem:transformed-existence-uniqueness}
			Suppose that $(b, \sigma)$ satisfy \Cref{ass:growth-disc}. Then the SDE \eqref{eq:sde-z} admits a unique strong solution $(Z_t)_{t \in [0, T]}$. Moreover, for any $p \geq 1$, there exist constants $C_0(p, T)$, $C_1(p, T)$, $C_2(p, T) > 0$ such that
			\begin{equation}\label{eq:moment-bound-monotonicity}
				\E \!\left[\sup_{t \in [0, T]}|Z_t|^p\right]\! \leq C_0(p, T)
			\end{equation}
			and, for all $s, t \in [0, T]$ with $s < t$,
			\begin{equation}\label{eq:one-step-error-exp}
				\E\!\left[\sup_{r \in [s, t]}|Z_r - Z_s|^p\right]\! \leq C_1(p, T)(t - s)^{\frac p2}
			\end{equation}
			and
			\begin{equation}\label{eq:exp-b-bound}
				\E\!\left[\sup_{r \in [s, t]}|\tilde b(Z_r) - \tilde b(Z_s)|^p\right]\! \leq C_2(p, T)(t - s)^{\frac p2}.
			\end{equation}
		\end{proposition}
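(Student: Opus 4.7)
The plan is to deduce each conclusion from the regularity of $(\tilde b, \tilde \sigma)$ established in \Cref{lem:transformed-coefficients}, proceeding in three stages: (i) invoke a classical well-posedness theorem for SDEs with monotone coefficients to obtain the unique strong solution; (ii) apply It\^o's formula with a suitable Lyapunov function and the monotonicity condition \eqref{eq:monotonicity} to establish moment bounds on $Z$; and (iii) derive the one-step error estimates via the Burkholder--Davis--Gundy (BDG) inequality, H\"older's inequality, and the exponential Lipschitz estimate \eqref{eq:exp-growth-transformed}.

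For existence and uniqueness, I would invoke the classical result for SDEs with locally Lipschitz coefficients satisfying a monotonicity condition (as found, e.g., in Mao's monograph on stochastic differential equations). The local Lipschitz property is furnished by \Cref{lem:transformed-coefficients}(iii), pathwise uniqueness follows from the one-sided Lipschitz estimate \Cref{lem:transformed-coefficients}(ii), and non-explosion with all polynomial moments bounded is guaranteed by the monotonicity condition \Cref{lem:transformed-coefficients}(v).

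For \eqref{eq:moment-bound-monotonicity}, the plan is to apply It\^o's formula to $V(z) = (1 + z^2)^{p/2}$, use the monotonicity condition to dominate the drift in $V(Z)$ by a linear function of $V(Z)$, and then close via BDG together with Gronwall's inequality to obtain the supremum-in-time bound. The one-step path estimate \eqref{eq:one-step-error-exp} then follows by decomposing $Z_r - Z_s$ into drift and diffusion integrals and applying H\"older to the former and BDG to the latter, which requires a uniform bound on $\E[|\tilde b(Z_u)|^p \vee |\tilde \sigma(Z_u)|^p]$ for $u \in [0, T]$.

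The main obstacle is the control of $\E[|\tilde b(Z_u)|^p]$: by \eqref{eq:exp-growth-transformed} the transformed drift grows essentially like $|z|\exp\{\tilde\gamma|z|^{\tilde\eta}\}$, so the task reduces to establishing a uniform exponential moment bound $\sup_{u \in [0,T]}\E[\exp\{c|Z_u|^{\tilde\eta}\}] < \infty$ for a suitable $c > 0$. I expect to prove this through a Khasminskii-type Lyapunov argument, applying It\^o's formula to $U(z) = \exp\{c(1+z^2)^{\tilde\eta/2}\}$ with $c$ chosen small enough to balance the drift and It\^o-correction contributions against the strong dissipativity that is implicit in the combination of \eqref{eq:exp-growth-transformed} and \eqref{eq:monotonicity} (since the only way $|\tilde b|$ can be exponentially large while $z\tilde b(z)$ remains quadratically bounded is for $\tilde b$ to push strongly towards the origin); the resulting differential inequality then closes under Gronwall. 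With the exponential moment in hand, \eqref{eq:exp-b-bound} follows by applying \eqref{eq:exp-growth-transformed} pointwise, splitting via Cauchy--Schwarz into a uniformly bounded exponential-moment factor and $\E[\sup_{r \in [s,t]}|Z_r - Z_s|^{2p}]^{1/2}$, the latter being controlled by \eqref{eq:one-step-error-exp} applied with exponent $2p$ to yield the $(t-s)^{p/2}$ rate.
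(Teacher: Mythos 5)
The overall strategy mirrors the paper's: well-posedness via monotonicity and one-sided Lipschitz theory, polynomial moment bounds, then one-step path and drift-increment estimates. The paper simply cites \citet[Theorem~1]{Kr91} for well-posedness, \citet[Lemma~3.2]{HiMaSt02} for \eqref{eq:moment-bound-monotonicity}, and \citet[Lemma~3.4, Theorem~3.5]{Hu96} for \eqref{eq:one-step-error-exp}--\eqref{eq:exp-b-bound}, with the observation that Hu's $C^1$ hypothesis can be relaxed to absolute continuity; your first two stages are correct and roughly the same arguments carried out by hand.

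The genuine issue is in your third stage, and specifically in the exponential moment bound. You propose to close the proof via a Khasminskii-type Lyapunov argument for $U(z)=\exp\{c(1+z^2)^{\tilde\eta/2}\}$, justified by the heuristic that the monotonicity condition \eqref{eq:monotonicity} together with exponential growth of $\tilde b$ forces strong dissipativity. The heuristic is fine as far as it goes: wherever $|\tilde b(z)|$ is actually super-linear, the bound $z\tilde b(z)\le\alpha+\beta z^2$ forces $\tilde b$ to point sharply towards the origin. But the Lyapunov computation does not close from this alone. Writing $g(z)=c(1+z^2)^{\tilde\eta/2}$, the generator contributes the non-negative term $\tfrac12 \tilde\sigma^2(z) g'(z)^2 \sim c^2\tilde\eta^2\beta\,|z|^{2\tilde\eta}$, which is unbounded and cannot be absorbed by the monotonicity condition (which only yields $g'\tilde b\lesssim |z|^{\tilde\eta}$ from above). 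You can only kill it by invoking the dissipative part of $g'\tilde b$, but the monotonicity condition gives no lower bound on $z\tilde b(z)$, so $\tilde b$ need not be dissipative at all. Indeed, $\tilde b(z)=L z$ with $L>0$ satisfies all of \Cref{lem:transformed-coefficients} (the exponential Lipschitz estimate is vacuous for linear $b$, for any $\tilde\eta$), and the resulting exploding Ornstein--Uhlenbeck process has $\E\!\left[\exp\{c|Z_t|^{\tilde\eta}\}\right]=\infty$ whenever $\tilde\eta>2$, for every $c>0$. So a uniform exponential moment bound with exponent $\tilde\eta$ is simply false in the generality you claim, and the Cauchy--Schwarz split you propose for \eqref{eq:exp-b-bound} then produces an infinite right-hand side, even though the conclusion itself is trivially true in this example (since $\tilde b(Z_r)-\tilde b(Z_s)=L(Z_r-Z_s)$ and \eqref{eq:one-step-error-exp} applies directly).

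So your decomposition ``first prove an $\tilde\eta$-exponential moment, then Cauchy--Schwarz'' is too coarse: it discards the cancellation between $\tilde b(Z_r)$ and $\tilde b(Z_s)$ along the path and instead tries to control $\tilde b(Z_\cdot)$ pointwise, which overestimates badly when $\tilde b$ saturates its growth bound slowly. The paper's route through \citep{Hu96} avoids committing to this overestimate: the arguments there use the fundamental-theorem-of-calculus representation of increments of $\tilde b$ and the one-sided Lipschitz bound, rather than a pointwise exponential moment with the nominal exponent $\tilde\eta$. If you want to avoid the citation and keep a self-contained proof, you would need a more careful argument — for instance, splitting according to whether $|\tilde b|$ is below a polynomial threshold (where polynomial moments of $Z$ suffice) or above it (where monotonicity forces strong negativity of $z\tilde b(z)$, yielding local-in-$z$ exponential bounds), and stitching these together — or else prove the increment bound \eqref{eq:exp-b-bound} directly via It\^o and one-sided Lipschitz without passing through $\E[|\tilde b(Z_u)|^p]$ at all.
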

				
		\begin{proof}
			By \Cref{lem:transformed-coefficients}, $(\tilde b, \tilde \sigma)$ satisfy the monotonicity condition \eqref{eq:monotonicity}, $\tilde b$ is continuous and one-sided Lipschitz, and $\tilde \sigma$ is Lipschitz. Under these conditions, the classical result of Krylov \citep[Theorem 1]{Kr91} shows existence and uniqueness of a strong solution $Z$ of \eqref{eq:sde-z}.
			The monotonicity condition \eqref{eq:monotonicity} also implies moment bounds \eqref{eq:moment-bound-monotonicity} on the solution, as shown, for example, in \citep[Lemma 3.2]{HiMaSt02}.
			
			Given that $\tilde b$ also satisfies the exponential growth condition \eqref{eq:exp-growth-transformed}, we can follow the proofs of \citet[Lemma 3.4, Theorem 3.5]{Hu96} to prove the bounds \eqref{eq:one-step-error-exp} and \eqref{eq:exp-b-bound}. We note that \citet{Hu96} assumes that the coefficients $(\tilde b, \tilde \sigma)$ are continuously differentiable, but an inspection of the proofs reveals that no such assumptions are necessary; in particular, \Cref{lem:transformed-coefficients} shows that the fundamental theorem of calculus holds.
		\end{proof}
		
		We now prove our existence and uniqueness result, following the proof of \citep[Theorem 2.6]{LeSz17}.
		
		\begin{theorem}\label{thm:existence-uniqueness}
			Suppose that the coefficients $(b, \sigma)$ satisfy \Cref{ass:growth-disc}. Then there exists a unique strong solution $X$ of \eqref{eq:sde} with coefficients $(b, \sigma)$ and initial condition $X_0 = x_0 \in \R$. Moreover, for any $p \geq 1$, there exists a constant $C(p, T) > 0$ such that
			\begin{equation}\label{eq:moment-bounds}
				\E\!\left[\sup_{t \in [0, T]}|X_t|^p\right]\! \leq C(p, T).
			\end{equation}
		\end{theorem}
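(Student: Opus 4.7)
The plan is to reduce \Cref{thm:existence-uniqueness} to the already-proven \Cref{lem:transformed-existence-uniqueness} by means of the transformation $G$. Concretely, we let $Z$ be the unique strong solution of the transformed SDE \eqref{eq:sde-z} and define the candidate solution $X_t \coloneqq G^{-1}(Z_t)$, $t \in [0, T]$. Since $G^{-1}$ is bounded Lipschitz, $X$ is adapted and continuous, and by applying the Lipschitz bound $|X_t| \leq |G^{-1}(0)| + L_{G^{-1}}|Z_t|$ combined with the moment bound \eqref{eq:moment-bound-monotonicity} for $Z$, we immediately deduce the moment estimate \eqref{eq:moment-bounds}.

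To show that $X$ actually solves \eqref{eq:sde}, I apply an It\^o-type formula to $G^{-1}(Z_t)$ and verify that the drift and diffusion coefficients coming out are exactly $b(X_t)$ and $\sigma(X_t)$. Using the identities $\tilde b = (b G^\prime) \circ G^{-1} + \frac12(\sigma^2 G^{\prime \prime}) \circ G^{-1}$ and $\tilde \sigma = (\sigma G^\prime) \circ G^{-1}$ from \eqref{eq:transformed-coefficients}, together with the chain rule identities $(G^{-1})^\prime(G(x)) = 1/G^\prime(x)$ and $(G^{-1})^{\prime\prime}(G(x)) = -G^{\prime\prime}(x)/G^\prime(x)^3$, a direct computation yields
\begin{equation}
    (G^{-1})^\prime(Z_t)\tilde b(Z_t) + \tfrac12 (G^{-1})^{\prime\prime}(Z_t)\tilde \sigma(Z_t)^2 = b(X_t), \qquad (G^{-1})^\prime(Z_t)\tilde \sigma(Z_t) = \sigma(X_t),
\end{equation}
so that $X$ satisfies \eqref{eq:sde} and the initial condition $X_0 = G^{-1}(G(x_0)) = x_0$.

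The main technical obstacle is the application of It\^o's formula, since $G^{-1}$ is only $C^1$ with a piecewise Lipschitz almost-everywhere second derivative (\Cref{lem:G-properties-growth-disc}), not $C^2$. This is handled by using a generalised It\^o formula of It\^o--Krylov type, valid for functions in $W^{2,\infty}_{\mathrm{loc}}$ applied to continuous semimartingales, exactly as in \citep[proof of Theorem 2.6]{LeSz17}. The finitely many exceptional points $\{G(\xi_1), \dotsc, G(\xi_m)\}$ where $(G^{-1})^{\prime\prime}$ has a jump cause no issue: the local time correction term vanishes because $(G^{-1})^\prime$ is continuous there, so only the absolutely continuous part contributes and coincides with the classical It\^o formula $\mathrm{Leb}$-almost everywhere, which is sufficient since the occupation time formula bounds the time $Z$ spends on the exceptional set by its expected local time.

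Finally, uniqueness transfers through the transformation in the reverse direction. If $\tilde X$ is any other strong solution of \eqref{eq:sde}, then applying the same generalised It\^o formula to $G(\tilde X_t)$ and using $\tilde b = (b G^\prime)\circ G^{-1} + \tfrac12(\sigma^2 G^{\prime\prime})\circ G^{-1}$, $\tilde \sigma = (\sigma G^\prime) \circ G^{-1}$ shows that $G(\tilde X)$ solves \eqref{eq:sde-z} with the same initial condition. By the uniqueness part of \Cref{lem:transformed-existence-uniqueness}, $G(\tilde X) = Z$, and since $G$ is a bijection, $\tilde X = G^{-1}(Z) = X$, completing the proof.
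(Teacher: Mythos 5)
Your proof is correct and takes essentially the same route as the paper: define $X = G^{-1}(Z)$ where $Z$ solves the transformed SDE \eqref{eq:sde-z}, obtain the moment bound from the Lipschitz continuity of $G^{-1}$, apply a generalised It\^o formula (valid since $G^{-1}$ is $C^1$ with Lipschitz derivative) to verify that $X$ solves \eqref{eq:sde}, and transfer uniqueness back through $G$. The only cosmetic difference is that you derive \eqref{eq:moment-bounds} directly from the moment bound \eqref{eq:moment-bound-monotonicity} on $Z$, while the paper instead invokes the one-step error bound \eqref{eq:one-step-error-exp} with $s=0$; both are equally valid.
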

		
		\begin{proof}
			By \Cref{lem:transformed-existence-uniqueness}, there exists a unique strong solution $Z$ of the SDE \eqref{eq:sde-z}. Now define $X_t = G^{-1}(Z_t)$, for all $t \in [0, T]$.
			By \Cref{lem:G-properties-growth-disc}, $G^{-1}$ is Lipschitz with Lipschitz constant $L_{G^{-1}}$, and by \Cref{lem:transformed-existence-uniqueness}, $Z$ satisfies the one-step error bound \eqref{eq:one-step-error-exp}. Hence, for $p \geq 1$,
			\begin{equation}
				\begin{split}
					\E\!\left[\sup_{t \in [0, T]}|X_t|^p\right]\!
					& \leq 2^{p - 1}L_{G^{-1}}^p\E\!\left[\sup_{t \in [0, T]}|Z_t - Z_0|^p\right]\! + 2^{p - 1}|x_0|^p \leq C(p,T),
				\end{split}
			\end{equation}
			where $C(p, T) \coloneqq 2^{p - 1}L_{G^{-1}}^pC_1(p, T)T^{\frac p 2} + 2^{p - 1}|x_0|^p$.
			
			By \Cref{lem:G-properties-growth-disc}, $G^{-1}\colon \R \to \R$ is continuously differentiable and has almost everywhere second derivative that is locally integrable. Thus we may apply It\^o's formula (see, e.g.~\cite[\S VI~Theorem 1.5 and Remarks]{ReYo99}) to see that $X$ satisfies the SDE \eqref{eq:sde} with coefficients $(b, \sigma)$, and $X_0 = G^{-1}(Z_0) = x_0$. Since $Z$ is a strong solution, and $X$ is a deterministic transformation of $Z$ at each time, $X$ is also a strong solution. Conversely, for any strong solution $X^\prime$ of \eqref{eq:sde} with coefficients $(b, \sigma)$ and with $X^\prime_0 = x_0$, the process $Z^\prime$ defined by taking $Z^\prime_t = G(X^\prime_t)$ for $t \in [0, T]$ is a strong solution of \eqref{eq:sde-z}. By pathwise uniqueness, $Z^\prime_t = Z_t$ for all $t \in [0, T]$ almost surely. Thus, $X^\prime_t = G^{-1}(Z^\prime_t) = G^{-1}(Z_t) = X_t$ for all $t \in [0, T]$ almost surely. We conclude that strong existence and pathwise uniqueness hold for \eqref{eq:sde} with coefficients $(b, \sigma)$.
		\end{proof}
		
	\subsection{Transformed semi-implicit Euler--Maruyama scheme}\label{sec:implicit-transformed-em}
		We now define a numerical scheme via the transformation introduced in the previous section.
		
		\begin{definition}[transformed (monotone) semi-implicit Euler--Maruyama scheme]\label{def:mono-si-em}
			Fix $N \in \N$, with $N > T$, and set $h = T / N$. For the SDE \eqref{eq:sde} with coefficients $(b, \sigma)$ satisfying \Cref{ass:growth-disc}, let $G$ be defined as in \eqref{eq:G-def} and $(\tilde b, \tilde \sigma)$ as in \eqref{eq:transformed-coefficients}.
		  
			Define the \emph{transformed monotone semi-implicit Euler--Maruyama scheme} $X^h$ by $X^h_0 = x_0$ and, for $k \in \{0, \dotsc, N - 1\}$,
			\begin{equation}\label{eq:implicit-scheme}
				 X^h_{(k + 1)h} \coloneqq G^{-1}\Bigl(G(X^h_{kh}) + (\tilde b \circ G)(X^h_{(k+1)h}) \cdot h + (\tilde \sigma \circ G)(X^h_{kh})\Delta W^h_{k + 1}\Bigr),
			\end{equation}
			where $\Delta W^h_{k + 1}$ is a truncated Brownian increment, as in \Cref{def:trunc-bm}.
			
			We may define the \emph{transformed semi-implicit Euler--Maruyama scheme} $\tilde X^h$ analogously, replacing the truncated increments with the usual Brownian increments $\Delta W_{k}$.
		\end{definition}
		
		\begin{remark}\label{rem:terminology-monotone-implicit}~
			\begin{enumerate}[label=(\roman*)]
				\item Similarly to the monotone Euler--Maruyama scheme discussed in \Cref{sec:num-sdes}, the term \emph{monotone} in \Cref{def:mono-si-em} is motivated by the fact that the scheme will be shown to be stochastically monotone when $\tilde b$ is one-sided Lipschitz and $\tilde \sigma$ is Lipschitz; see \Cref{lem:stochdom}.
				\item If $b$ is continuous, then the transformation $G$ is the identity and $\tilde X^h$ is the classical semi-implicit (or backward) Euler--Maruyama scheme, as studied for example in \cite{Hu96,HiMaSt02,MaSz13a,MaSz13b}.
			\end{enumerate}
		\end{remark}
		
		For the scheme defined in \Cref{def:mono-si-em}, we will prove moment bounds and strong $L^p$-convergence for $p \ge 1$. In particular, for the semi-implicit Euler--Maruyama scheme, we generalise the moment bounds from \citet{HiMaSt02} and the $L^2$ convergence from \citet{Hu96}. We note that \citep{Hu96,HiMaSt02} also assume continuous differentiability of $b$ and $\sigma$, but \cite{Hu96} uses this condition only for existence and uniqueness of solutions, which we have established under \Cref{ass:growth-disc} in \Cref{thm:existence-uniqueness}. Similarly, the result of \cite{HiMaSt02} can be established without the continuous differentiability assumption in our setting.
		
		We will repeatedly make use of the following lemma.
		
		\begin{lemma}\label{lem:invertible-implicit}
			Suppose that $f \colon \R \to \R$ is one-sided Lipschitz with constant $L_f$ and let $h \in (0, 1/L_f)$. Then the map $(\id - h f)$ is strictly increasing and invertible with strictly increasing inverse.
		\end{lemma}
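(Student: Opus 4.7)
The plan is to obtain the strict monotonicity of $F \coloneqq \id - hb$ as a direct algebraic consequence of the one-sided Lipschitz estimate, and then read off invertibility and monotonicity of the inverse. First, for arbitrary $x > y$ in $\R$, I would divide the inequality $(x-y)(b(x)-b(y))\le L_b(x-y)^2$ by the positive quantity $x - y$, obtaining the one-sided slope bound $b(x) - b(y) \le L_b(x - y)$. Substituting this into $F(x) - F(y) = (x - y) - h(b(x) - b(y))$ yields
\[
F(x) - F(y) \ge (1 - hL_b)(x - y) > 0,
\]
where the strict positivity uses $hL_b < 1$. This establishes strict monotonicity on all of $\R$.

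From strict monotonicity I immediately get injectivity, so $F$ possesses a well-defined set-theoretic inverse $F^{-1}$ on its image, and strict monotonicity of $F^{-1}$ is inherited: if one had $F^{-1}(u) \ge F^{-1}(v)$ for some $u < v$ in $F(\R)$, then applying the increasing map $F$ would give $u \ge v$, a contradiction.

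The only subtle point, which I expect to be the main thing to flag, is the sense of \emph{invertible}: to get an inverse defined on all of $\R$ one needs $F(\R) = \R$, which is not automatic from the one-sided Lipschitz hypothesis alone — a downward jump of $b$ would produce an upward jump of $F$, leaving gaps in its image. However, in the applications later in this section the lemma will be used with continuous drifts (the transformed coefficient $\tilde b$ of \Cref{lem:transformed-coefficients}, and analogously for $\bar b$); under continuity, $F$ is continuous and the bound $F(x) - F(y) \ge (1 - hL_b)(x - y)$ forces $F(x) \to \pm\infty$ as $x \to \pm\infty$, so $F$ is a homeomorphism of $\R$ onto itself. The strictly-monotone-and-invertible-on-its-image part, which is what the statement literally asks for, is the two-line computation above.
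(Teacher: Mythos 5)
Your argument is correct and takes essentially the same route as the paper's proof: both derive strict monotonicity of $\id - hb$ directly from the one-sided Lipschitz inequality via the same elementary algebraic manipulation (the paper divides $h(x-y)(b(x)-b(y)) \le hL_b|x-y|^2 < |x-y|^2$ by $x-y$; you substitute the slope bound into $F(x)-F(y)$ to get the slightly sharper quantitative bound $F(x)-F(y)\ge(1-hL_b)(x-y)$). Your flag on surjectivity is a fair observation the paper leaves implicit, and your resolution is the right one — the lemma is only ever applied to $\tilde b$, which by \Cref{lem:transformed-coefficients} is absolutely continuous, so $\id - h\tilde b$ is a continuous strictly increasing map whose coercivity (from your lower bound) forces it onto all of $\R$.
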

		
		\begin{proof}
			Suppose that $x, y \in \R$, $x < y$. By the one-sided Lipschitz condition,
			\begin{equation}
				\begin{split}
					h (x - y)(f(x) - f(y)) \leq h L_f |x - y|^2 < |x - y|^2.
				\end{split}
			\end{equation}
			Then, since $x - y < 0$,
			\begin{equation}
				\begin{split}
					h (f(x) - f(y)) > (x - y)^{-1}|x - y|^2 = x - y,
				\end{split}
			\end{equation}
			or equivalently
			\begin{equation}
				(\id - h f)(x) < (\id - h f)(y).
			\end{equation}
			Hence $(\id - h f)$ is strictly increasing, and thus invertible with strictly increasing inverse.
		\end{proof}
		
		\begin{lemma}\label{well-defined-transf-implicit}
			Suppose that the coefficients of \eqref{eq:sde} satisfy \Cref{ass:growth-disc}, let $\tilde b$ be the transformed drift in \eqref{eq:transformed-coefficients}, and let $L_{\tilde b} \in (0, \infty)$ be as in \Cref{lem:transformed-coefficients}. Then, for all $h < 1/L_{\tilde b}$, the transformed (monotone) semi-implicit Euler--Maruyama scheme \eqref{eq:implicit-scheme} is well defined.
		\end{lemma}
		
		\begin{proof}
			By \Cref{lem:transformed-coefficients}, $\tilde b$ is one-sided Lipschitz with constant $L_{\tilde b}$. Then, for $h < 1/L_{\tilde b}$, the map $(\id - h \tilde b)$ is invertible by \Cref{lem:invertible-implicit}. We can rewrite \eqref{eq:implicit-scheme} as
			\begin{equation}
				(\id - h \tilde b)\bigl(G(X^h_{(k + 1)h})\bigr) = G(X^h_{kh}) + (\tilde \sigma \circ G)(X^h_{kh})\Delta W^h_{k + 1},
			\end{equation}
			and similarly in the case of non-truncated Brownian increments. Since $G$ is also invertible by \Cref{lem:G-properties-growth-disc}, we may conclude.
		\end{proof}
		
		\begin{proposition}\label{prop:implicit-euler-moment-bounds}
			Suppose that the coefficients of \eqref{eq:sde} satisfy \Cref{ass:growth-disc}, let $(\tilde b, \tilde \sigma)$ be the transformed coefficients in \eqref{eq:transformed-coefficients}, and let $\beta, L_{\tilde b} \in (0, \infty)$ be as in \Cref{lem:transformed-coefficients}. Let $N \in \N$ with $N > (1 \vee 4\beta \vee 2 L_{\tilde b})T$, let $h = T/N$, and let $X^h$ denote the transformed monotone semi-implicit Euler--Maruyama scheme defined by \eqref{eq:implicit-scheme}.
			
			Then, for any $p \geq 1$, there exists a constant $C(p, T) > 0$, not depending on $h$, such that
			\begin{equation}\label{eq:moment-bound-implicit-scheme}
				\E\!\left[\sup_{k \in \{0, \dotsc, N\}}|X^h_{kh}|^p\right]\! \leq C(p, T).
			\end{equation}
			Analogous bounds hold for the scheme $\tilde X^h$ defined with respect to non-truncated Brownian increments.
		\end{proposition}
		
		\begin{proof}
			Since $h < 1/L_{\tilde b}$, \Cref{well-defined-transf-implicit} implies that $X^h$ and $\tilde X^h$ are well defined.
			
			First suppose that $p > 4$.
			For $k \in \{0, \dotsc, N\}$, define $Z^h_{kh} \coloneqq G(X^h_{kh})$. By \Cref{lem:G-properties-growth-disc}, $G^{-1}$ is Lipschitz with constant $L_{G^{-1}}$, and so
			\begin{equation}\label{eq:lipschitz-moment}
				\E\!\left[\sup_{k \in \{0, \dotsc, N\}}|X^h_{kh}|^p\right]\! \leq 3^{p - 1}L_{G^{-1}}^p\E\!\left[\sup_{k \in \{0, \dotsc, N\}}|Z^h_{kh}|^p\right]\! + 3^{p - 1}L_{G^{-1}}^p|G(x_0)|^p + 3^{p - 1}|x_0|^p.
			\end{equation}
			We aim to bound the moments of $\sup_{k \in \{0, \dotsc, N\}}|Z^h_{kh}|$.
			
			Define $K_h \coloneqq (1 - 2 \beta h)^{-1} < 2$ and, for $k \in \{0, \dotsc, N\}$, define $Y^h_{kh} \coloneqq (\id - h \tilde b)(Z^h_{kh})$. By \Cref{lem:transformed-coefficients}, $(\tilde b, \tilde \sigma)$ satisfy the monotonicity condition \eqref{eq:monotonicity}. As in \citep[Lemma 3.2]{MaSz13b}, this implies that, for any $k \in \{0, \dotsc, N\}$,
			\begin{equation}\label{eq:SSBE-bounds-BE}
				|Z^h_{kh}|^2 \leq K_h\bigl(|Y^h_{kh}|^2 + 2 \alpha h\bigr).
			\end{equation}
			Therefore it suffices to bound the moments of $\sup_{k \in \{0, \dotsc, N\}}|Y^h_{kh}|$. Observe that, for any $k \in \{0, \dotsc, N - 1\}$,
			\begin{equation}\label{eq:SSBE}
				Y^h_{(k + 1)h} = Z^h_{kh} + \tilde \sigma(Z^h_{kh})\Delta W^h_{k+1},
			\end{equation}
			and
			\begin{equation}
				Z^h_{kh} = Y^h_{kh} + h \tilde b(Z^h_{kh}).
			\end{equation}
			Thus $(Y^h_{kh})_{k \in \{0, \dotsc, N \}}$ coincides with the so-called \emph{split-step backward Euler method} described in \citep[Section 3.3]{HiMaSt02}, started from $Y^h_0 = x_0 - h \tilde b(x_0)$ and with the increments $(\Delta W_{k + 1})_{k \in \{0, \dotsc, N - 1\}}$ replaced by $(\Delta W^h_{k + 1})_{k \in \{0, \dotsc, N - 1\}}$. We now closely follow the proof of \citep[Lemma 3.7]{HiMaSt02} to obtain moment bounds.
			
			To ensure finite moments, for $R > 0$, define the stopping time
			\begin{equation}
				\theta_R \coloneqq \inf\{\, k \in \{0, \dotsc N\} : |Y^h_{kh}| > R \,\},
			\end{equation}
			with the convention that $\inf \emptyset \coloneqq \infty$. For $k \in \{0, \dotsc, \theta_R - 1\}$, we have the bound $|Y^h_{kh}| \leq R$. To bound the moments of $|Y^h_{\theta_R h}|$ we first note that, by \eqref{eq:SSBE-bounds-BE},
			\begin{equation}
				|Z^h_{(\theta_R - 1)h}|^p \leq 2^{\frac p2 - 1}K_h^{\frac p2} \bigl(|Y^h_{(\theta_R - 1)h}|^p + (2 \alpha h)^{\frac p2}\bigr) \leq 2^{p - 1} \bigl(R^p + (2 \alpha)^{\frac p2}\bigr) \eqqcolon C(p, R).
			\end{equation}
			Note that, in \Cref{def:trunc-bm} of the truncation Brownian motion, we have $A_h < 2$. Then, combining \eqref{eq:SSBE} with the monotonicity condition \eqref{eq:monotonicity} proved in \Cref{lem:transformed-coefficients}, we have
			\begin{equation}
				\begin{split}
					\E[|Y^h_{\theta_R h}|^p] & \leq 2^{p - 1}\E[|Z^h_{(\theta_R - 1)h}|^p] + 2^{p -1}A_h^p \E\bigl[|\tilde \sigma(Z_{(\theta_R - 1)h})|^p\bigr]\\
					& \leq 2^{p - 1}\E[|Z^h_{(\theta_R - 1)h}|^p] + 2^{p - 1}2^{\frac p 2 - 1}A_h^p \bigl(\alpha^{\frac p2} + \beta^{\frac p2}\E[|Z_{(\theta_R - 1)h}|^p]\bigr)\\
					& \leq 2^{p - 1}C(p, R)^p(1 + 2^{\frac{3p}{2} - 1}\beta^{\frac p2}) + 2^{\frac{5p}{2} - 2}\alpha^{\frac p2} \eqqcolon \tilde C(p, R).
				\end{split}
			\end{equation}
			Hence
			\begin{equation}\label{eq:moments-finite}
				\sup_{k \in \{0, \dotsc, N\}} \E\bigl[|Y^h_{(k \wedge \theta_R)h}|^p\bigr] \leq \max \bigl\{R^p, \tilde C(p, R)\bigr\} < \infty.
			\end{equation}
			
			Combining \eqref{eq:SSBE-bounds-BE} with \eqref{eq:SSBE} gives
			\begin{equation}
				\begin{split}
					|Y^h_{(k + 1)h}|^2 & = |Z^h_{kh}|^2 + 2 Z^h_{kh} \tilde \sigma(Z^h_{kh}) \Delta W^h_{k + 1} + |\tilde \sigma(Z^h_{kh}) \Delta W^h_{k+1}|^2\\
					& \leq K_h |Y^h_{kh}|^2 + 2 \alpha h K_h + 2 Z^h_{kh} \tilde \sigma(Z^h_{kh}) \Delta W^h_{k + 1} + |\tilde \sigma(Z^h_{kh}) \Delta W^h_{k+1}|^2,
				\end{split}
			\end{equation}
			for any $k \in \{0, \dotsc, N - 1\}$.
			For all $\ell \in \{1, \dotsc, N\}$, noting that $K_h = 1 + 2\beta h K_h$, and iterating the above inequality, we have
			\begin{equation}
				\begin{split}
					|Y^h_{(\ell \wedge \theta_R)h}|^2 & \leq |Y^h_0|^2 + 2 \beta h K_h \sum_{k = 0}^{(\ell \wedge \theta_R) - 1}|Y^h_{kh}|^2 + 2 \alpha TK_h + 2 \sum_{k = 0}^{(\ell \wedge \theta_R) - 1}Z^h_{kh} \tilde \sigma(Z^h_{kh}) \Delta W^h_{k + 1}\\
					& \quad + \sum_{k = 0}^{(\ell \wedge \theta_R) - 1}\bigl|\tilde \sigma(Z^h_{jh})\Delta W^h_{k + 1}\bigr|^2.
				\end{split}
			\end{equation}
			We now raise each side to the power $p/2$ and use the inequality $(a_1 + a_2 + a_3 + a_4)^{\frac p2} \leq 4^{\frac p2 - 1}(|a_1|^{\frac p2} + |a_2|^{\frac p2} + |a_3|^{\frac p2} + |a_4|^{\frac p2})$, for any $a_1, a_2, a_3, a_4 \in \R$. Setting $c_0(p, T) \coloneqq 2^{\frac {3p}{2} - 2}(|G(x_0)|^2 + |\tilde b(G(x_0))|^2 + 2 \alpha T)^{\frac p2}$, we have
			\begin{equation}\label{eq:BE-bound-sum}
				\begin{split}
					|Y^h_{(\ell \wedge \theta_R)h}|^p & \leq c_0(p, T) + 2^{\frac{3p}{2} - 2}\bigg(\beta h K_h \sum_{k = 0}^{(\ell \wedge \theta_R) - 1}|Y^h_{kh}|^2\bigg)^{\frac p2} + 2^{\frac{3p}{2} - 2}\bigg | \sum_{k = 0}^{(\ell \wedge \theta_R) - 1} Z^h_{kh} \tilde \sigma(Z^h_{kh}) \Delta W^h_{k + 1}\bigg |^{\frac p2}\\
					& \quad + 2^{p - 2}\bigg(\sum_{k = 0}^{(\ell \wedge \theta_R) - 1}|\tilde \sigma(Z^h_{kh}) \Delta W^h_{k + 1}|^2\bigg)^{\frac p2}.
				\end{split}
			\end{equation}
			
			For all $M \in \{1, \dotsc, N\}$, we seek a bound for $\E[\sup_{\ell \in \{0, \dotsc, M\}} |Y^h_{(\ell \wedge \theta_R)h}|^p]$. We will treat each term in the sum \eqref{eq:BE-bound-sum} in turn. First, since $p > 2$, an application of H\"older's inequality gives
			\begin{equation}\label{eq:BE-bound-1}
				2^{\frac{3p}{2} - 2}\bigg(\beta h K_h \sum_{k = 0}^{(\ell \wedge \theta_R) - 1}|Y^h_{kh}|^2\bigg)^{\frac p2} \leq c_1(p, T) h \sum_{k = 0}^{\ell - 1} |Y^h_{(k \wedge \theta_R)h}|^p,
			\end{equation}
			where $c_1(p, T) \coloneqq 2^{2p - 2}\beta^{\frac p2} T^{\frac p2 -1}$.
			
			To bound the second term in \eqref{eq:BE-bound-sum}, we apply the Burkholder--Davis--Gundy inequality, noting that the truncated Brownian motion $W^h$ is an $\F^W$-martingale with quadratic variation $\langle W^h \rangle$ satisfying the bound $\langle W^h \rangle_t \leq \langle W \rangle_t = t$, for all $t \geq 0$. Thus there exists a constant $c_2(p) > 0$ such that
			\begin{equation}
				\begin{split}
					\E \!\left[\sup_{\ell \in \{0, \dotsc, M\}}\bigg | \sum_{k = 0}^{(\ell \wedge \theta_R) - 1} Z^h_{kh} \tilde \sigma(Z^h_{kh}) \Delta W^h_{k + 1}\bigg |^{\frac p2}\right]\! & = \E \!\left[\sup_{\ell  \in \{0, \dotsc, M\}}\bigg | \sum_{k = 0}^{(\ell \wedge \theta_R) - 1} \int_{kh}^{(k + 1)h}Z^h_{kh} \tilde \sigma(Z^h_{kh}) \D W^h_s\bigg |^{\frac p2}\right]\!\\
					& \leq c_2(p) \E \!\left[\bigg ( \sum_{k = 0}^{(M \wedge \theta_R) - 1}h\bigl|Z^h_{kh} \tilde \sigma(Z^h_{kh})\bigr|^2 \bigg )^{\frac p4}\right]\!.
				\end{split}
			\end{equation}
			Now, noting that $p > 4$, H\"older's inequality gives
			\begin{equation}
				\begin{split}
					\E \!\left[\biggl( \sum_{k = 0}^{(M \wedge \theta_R) - 1}h\bigl|Z^h_{kh} \tilde \sigma(Z^h_{kh})\bigr|^2 \biggr)^{\frac p4}\right]\! & \leq T^{{\frac p4} - 1} h \sum_{k = 0}^{M - 1}\E\Bigl[\bigl|Z^h_{(k \wedge \theta_R)h} \tilde \sigma(Z^h_{(k \wedge \theta_R)h})\bigr|^\frac{p}{2}\Bigr].
				\end{split}
			\end{equation}
			Using the monotonicity condition \eqref{eq:monotonicity} and the inequality $a \leq \frac12(1 + a^2)$ for any $a \in \R$, we find a constant $c_3(p) > 0$ such that
			\begin{equation}
				\begin{split}
					\bigl|Z^h_{(k \wedge \theta_R)h} \tilde \sigma(Z^h_{(k \wedge \theta_R)h})\bigr|^{\frac p2} & \leq |Z^h_{(k \wedge \theta_R)h}|^{\frac p2}\bigl(\alpha + \beta |Z^h_{(k \wedge \theta_R)h}|^2\bigr)^{\frac p 4}\\
					& \leq c_3(p) |Z^h_{(k \wedge \theta_R)h}|^{\frac p2}\bigl(\alpha^{\frac p4} + \beta^{\frac p4} |Z^h_{(k \wedge \theta_R)h}|^{\frac p2}\bigr)\\
					& \leq c_3(p) \!\left[\frac 12 + \Big(\frac{\alpha^{\frac{p}{2}}}{2} + \beta^{\frac p4}\Big)|Z^h_{(k \wedge \theta_R)h}|^p\right]\!,
				\end{split}
			\end{equation}
			for all $k \in \{0, \dotsc, N\}$.
			Combining the previous estimates and setting $c_4(p, T) \coloneqq 2^{-1} T^{\frac p4} c_2(p)c_3(p)$ and $c_5(p, T) \coloneqq (2^{-1}\alpha^{\frac p2} + \beta^{\frac p4})T^{\frac p4 - 1}c_2(p)c_3(p)$, we arrive at
			\begin{equation}\label{eq:BE-bound-2}
				\E \!\left[\sup_{\ell \in \{0, \dotsc, M\}}\bigg | \sum_{k = 0}^{(\ell \wedge \theta_R) - 1} Z^h_{kh} \tilde \sigma(Z^h_{kh}) \Delta W^h_{k + 1}\bigg |^{\frac p2}\right]\! \leq c_4(p, T) + c_5(p, T) h \sum_{k = 0}^{M - 1}\E[|Z^h_{(k \wedge \theta_R)h}|^p].
			\end{equation}
			
			For the final term in \eqref{eq:BE-bound-sum}, note that
			 \begin{equation}\label{eq:BE-bound-sum-final}
			 	\begin{split}
			 		\E\!\left[\sup_{\ell \in \{0, \dotsc, M\}}\bigg(\sum_{k = 0}^{(\ell \wedge \theta_R) - 1}|\tilde \sigma(Z^h_{kh}) \Delta W^h_{k + 1}|^2\bigg)^{\frac p2}\right]\! & \leq N^{\frac p2 - 1}\E\!\left[\sup_{\ell \in \{0, \dotsc, M\}}\sum_{k = 0}^{(\ell \wedge \theta_R) - 1}\bigl|\tilde \sigma(Z^h_{kh}) \Delta W^h_{k + 1}\bigr|^p\right]\!\\
			 		& \leq N^{\frac p2 - 1}\sum_{k = 0}^{M - 1}\E\bigl[|\tilde \sigma(Z^h_{kh})|^p|\Delta W^h_{k + 1}|^p\bigr].
			 	\end{split}
			 \end{equation}
			 For each $k \in \{0, \dotsc, N - 1\}$, we have $\E[|\tilde \sigma(Z^h_{kh})|^p|\Delta W^h_{k + 1}|^p] = \E[|\tilde \sigma(Z^h_{kh})|^p]\E[|\Delta W^h_{k + 1}|^p]$, by independence of increments. The monotonicity condition \eqref{eq:monotonicity} implies that $\E[|\tilde \sigma(Z^h_{kh})|^p] \leq c_6(p)(\alpha^{\frac p2} + \beta^{\frac p2}\E[|Z^h_{kh}|^p])$, for some constant $c_6(p) > 0$. For the truncated Brownian motion $W^h$, there exists a constant $c_7(p) > 0$ such that $\E[|\Delta W^h_{k + 1}|^p] \leq \E[|\Delta W_{k + 1}|^p] \leq c_7(p) h^{\frac p2}$, where the first inequality can be shown by conditioning the Brownian increment and applying Jensen's inequality. Hence, by \eqref{eq:BE-bound-sum-final}, we get
			 \begin{equation}\label{eq:BE-bound-3}
			 	\begin{split}
			 		\E\!\left[\sup_{\ell \in \{0, \dotsc, M\}}\bigg(\sum_{k = 0}^{(\ell \wedge \theta_R) - 1}|\tilde \sigma(Z^h_{kh}) \Delta W^h_{k + 1}|^2\bigg)^{\frac p2}\right]\! & \leq c_8(p, T) + c_9(p, T)h\sum_{k = 0}^{M - 1}\E[|Z^h_{(k \wedge \theta_R)h}|^p],
			 	\end{split}
			 \end{equation}
			 where $c_8(p, T) \coloneqq c_6(p) c_7(p) \alpha^{\frac p2} T^{\frac p2}$ and $c_9(p, T) \coloneqq c_6(p) c_7(p) \beta^{\frac p2}T^{\frac p2 - 1}$.
			 		 
			 Combining \eqref{eq:SSBE-bounds-BE} with \eqref{eq:BE-bound-2} and \eqref{eq:BE-bound-3} gives
			 \begin{equation}
			 	\begin{split}
			 		\E \!\left[\sup_{\ell \in \{0, \dotsc, M\}}\bigg | \sum_{k = 0}^{(\ell \wedge \theta_R) - 1} Z^h_{kh} \tilde \sigma(Z^h_{kh}) \Delta W^h_{k + 1}\bigg |^{\frac p2}\right]\! & \leq \tilde c_4(p, T) + \tilde c_5(p, T) h \sum_{k = 0}^{M - 1}\E[|Y^h_{(k \wedge \theta_R)h}|^p],\\
			 		\E\!\left[\sup_{\ell \in \{0, \dotsc, M\}}\bigg(\sum_{k = 0}^{(\ell \wedge \theta_R) - 1}|\tilde \sigma(Z^h_{kh}) \Delta W^h_{k + 1}|^2\bigg)^{\frac p2}\right]\! & \leq \tilde c_8(p, T) + \tilde c_9(p, T)h\sum_{k = 0}^{M - 1}\E[|Y^h_{(k \wedge \theta_R)h}|^p],
			 	\end{split}
			 \end{equation}
			 for constants $\tilde c_4(p, T) = c_4(p,T) + 2^{\frac{3p}{2} - 1}\alpha^{\frac p2} T c_5(p, T)$, $\tilde c_5(p, T) = 2^{p - 1}c_5(p, T)$, $\tilde c_8(p, T) = c_8(p,T) + 2^{\frac{3p}{2} - 1} \alpha^{\frac p2} T c_9(p, T)$, $\tilde c_5(p, T) = 2^{p - 1}c_9(p, T)$. Further combining this with \eqref{eq:BE-bound-sum} and \eqref{eq:BE-bound-1}, we find that
			 \begin{equation}\label{eq:discrete-gronwall}
			 	\E \!\left[\sup_{\ell \in \{0, \dotsc, M\}} |Y^h_{(\ell \wedge \theta_R) h}|^p\right]\! \leq \tilde c_0(p, T) + \tilde c_1(p, T)h \sum_{k = 0}^{M - 1}\E\!\left[\sup_{j \in \{0, \dotsc, k \}}|Y^h_{(j \wedge \theta_R)h}|^p\right]\!,
			 \end{equation}
			 where $\tilde c_0(p, T) = c_0(p, T) + 2^{\frac{3p}{2} - 2}\tilde c_4(p, T) + 2^{p - 2}\tilde c_8(p, T)$ and $\tilde c_1(p, T) = c_1(p, T) + 2^{\frac{3p}{2} - 2}\tilde c_5(p, T) + 2^{p - 2}\tilde c_9(p, T)$.
			
			In light of \eqref{eq:moments-finite}, we can apply the discrete Gronwall inequality (see, e.g.~\citep[Lemma 3.4]{MaSz13a}) to conclude that
			 \begin{equation}
			 	\E \!\left[\sup_{\ell \in \{0, \dotsc, N\}} |Y^h_{(\ell \wedge \theta_R)h}|^p\right]\! \leq \tilde c_0(p, T)e^{\tilde c_1(p, T)T}.
			 \end{equation}
			 We can now take the limit $R \to \infty$ and apply Fatou's lemma to find
			  \begin{equation}
			 	\E \!\left[\sup_{\ell \in \{0, \dotsc, N\}} |Y^h_{\ell h}|^p\right]\! \leq \tilde c_0(p, T)e^{\tilde c_1(p, T)T}.
			 \end{equation}
			 A further application of \eqref{eq:SSBE-bounds-BE} yields
			 \begin{equation}
			 	\E \!\left[\sup_{\ell \in \{0, \dotsc, N\}} |Z^h_{\ell h}|^p\right]\! \leq 2^{\frac p2 - 1}K_h^{\frac p2}\Big(\tilde c_0(p, T)e^{\tilde c_1(p, T)T} + (2\alpha h)^{\frac p2}\Big).
			 \end{equation}
			 Set $C_0(p, T) \coloneqq 2^{p - 1}(\tilde c_0(p, T)e^{\tilde c_1(p, T)T} + (\tfrac{\alpha}{\beta})^{\frac p2})$ and $C(p, T) \coloneqq 2^{2(p - 1)}L_{G^{-1}}^p (C_0(p, T) + |G(x_0)|^p) + 2^{p - 1}|x_0|^p$. Applying the bound \eqref{eq:lipschitz-moment}, we conclude that \eqref{eq:moment-bound-implicit-scheme} holds.
			 
			 Given the bound \eqref{eq:moment-bound-implicit-scheme} for some $p = \bar p > 4$, we deduce that \eqref{eq:moment-bound-implicit-scheme} also holds for any $p \in [1, 4]$ with $C(p, T) = C(\bar p, T)^{\frac{p}{\bar p}}$, by applying Jensen's inequality.
			 
			 The result for $\tilde X^h$ is proved analogously.
		\end{proof}
		
		We prove the following strong convergence rates.
	
		\begin{theorem}\label{thm:implicit-transformed-convergence}
			Suppose that the coefficients of \eqref{eq:sde} satisfy \Cref{ass:growth-disc}, let $X$ be the unique strong solution of \eqref{eq:sde}, let $(\tilde b, \tilde \sigma)$ be the transformed coefficients in \eqref{eq:transformed-coefficients}, and let $\beta, L_{\tilde b} \in (0, \infty)$ be as in \Cref{lem:transformed-coefficients}. Let $N \in \N$ with $N > (1 \vee 4\beta \vee 2 L_{\tilde b})T$, let $h = T/N$, and let $X^h$ denote the transformed monotone semi-implicit Euler--Maruyama scheme defined by \eqref{eq:implicit-scheme}.
			
			Then, for any $p \geq 1$, there exists $C \in(0,\infty)$ such that
			\begin{equation}\label{eq:convergence-rate}
				\sup_{k \in \{1, \dotsc, N\}}\mathbb{E}[|X_{kh}- X^h_{kh}|^{p}]^{\frac1p} \vee \E\Biggl[\sum_{k = 0}^{N - 1} \int_{kh}^{(k+1)h}|X_t - X^h_{kh}|^p\D t\Biggr]^{\frac 1p} \le
				\begin{cases}
					C h^{\frac 12}, & p \in [1, 2],\\
					C h^{\frac{1}{p(p - 1)}}, & p \in [2, \infty).
				\end{cases}
			\end{equation}
			Analogous estimates hold for the scheme $\tilde X^h$ defined with respect to non-truncated Brownian increments.
		\end{theorem}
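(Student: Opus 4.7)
The plan is to exploit the transformation to reduce the problem to analysing a semi-implicit Euler--Maruyama scheme for the SDE \eqref{eq:sde-z} with coefficients $(\tilde b, \tilde \sigma)$, and then adapt the strong convergence arguments of \citet{Hu96} to our setting. Since $G^{-1}$ is Lipschitz with constant $L_{G^{-1}}$ by \Cref{lem:G-properties-growth-disc}, writing $Z_t = G(X_t)$ and $Z^h_{kh} = G(X^h_{kh})$ gives $|X_{kh} - X^h_{kh}| \leq L_{G^{-1}}|Z_{kh} - Z^h_{kh}|$, so it suffices to establish the bound \eqref{eq:convergence-rate} for the transformed processes. By construction of \eqref{eq:implicit-scheme}, $(Z^h_{kh})_{k}$ is exactly the semi-implicit Euler--Maruyama scheme for $Z$ but driven by the truncated Brownian increments $\Delta W^h_{k+1}$.

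I would first dispose of the truncation: the probability that $W^h$ and $W$ differ on $[0,T]$ is bounded by $\sum_{k} \P(|\Delta W_{k+1}| > A_h)$ which, by Gaussian tail bounds and the choice $A_h = 4\sqrt{-h\log h}$, decays faster than any polynomial in $h$. Combined with the moment bounds \eqref{eq:moment-bound-implicit-scheme} and \eqref{eq:moment-bounds} via H\"older's inequality, the error contribution from truncation is negligible compared to $h^{1/2}$, so it suffices to prove the bound for the scheme $\tilde X^h$ (resp.\ $\tilde Z^h = G(\tilde X^h)$) driven by standard Brownian increments.

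Next, for the non-truncated scheme, I would introduce the continuous-time interpolation
\begin{equation*}
\tilde Z^h_t = \tilde Z^h_{kh} + \tilde b(\tilde Z^h_{(k+1)h})(t - kh) + \tilde \sigma(\tilde Z^h_{kh})(W_t - W_{kh}), \quad t \in (kh, (k+1)h],
\end{equation*}
set $e_t = Z_t - \tilde Z^h_t$, and apply It\^o's formula to $|e_t|^2$ (using that $\tilde b$ is absolutely continuous so It\^o applies in the usual sense). The drift contribution, via the decomposition $\tilde b(Z_t) - \tilde b(\tilde Z^h_{(k+1)h}) = [\tilde b(Z_t) - \tilde b(\tilde Z^h_t)] + [\tilde b(\tilde Z^h_t) - \tilde b(\tilde Z^h_{(k+1)h})]$, splits into a one-sided-Lipschitz piece bounded by $L_{\tilde b}|e_t|^2$ (using \Cref{lem:transformed-coefficients}(ii)) and a residual controlled by the local Lipschitz estimate \eqref{eq:exp-growth-transformed} together with the one-step bounds \eqref{eq:one-step-error-exp} and \eqref{eq:exp-b-bound}. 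The diffusion contribution is handled by the global Lipschitz property of $\tilde \sigma$ and the BDG inequality. A Gr\"onwall argument then yields the $L^2$ rate $h^{1/2}$, from which the $L^p$ rate for $p \in [1,2]$ follows by Jensen.

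The main obstacle, and the reason for the degraded rate for $p > 2$, is that estimating $\E\bigl[(\exp\{\tilde\gamma|Z_t|^{\tilde\eta}\} + \exp\{\tilde\gamma|\tilde Z^h_t|^{\tilde\eta}\})^q |e_t|^q \bigr]$ for $q$ close to $p$ requires finiteness of sufficiently high exponential moments of $Z$ and $\tilde Z^h$. These are available in our setting because \eqref{eq:moment-bound-monotonicity} and \eqref{eq:moment-bound-implicit-scheme} hold for every order, but accessing them costs a H\"older conjugate exponent that grows with $p$. Balancing this against the time-regularity exponent extracted from \eqref{eq:one-step-error-exp}, exactly as in \citet[Theorem 3.5]{Hu96}, produces the rate $h^{1/(p(p-1))}$. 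The integrated-in-time estimate in \eqref{eq:convergence-rate} then follows from the pointwise estimate combined with \eqref{eq:one-step-error-exp} and the triangle inequality, and the analogous bounds for $\tilde X^h$ are obtained along the way by the same argument.
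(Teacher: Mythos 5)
Your reduction via the Lipschitz inverse $G^{-1}$ and the plan to redo a Hu-style $L^2$ estimate for the transformed scheme are the right skeleton, and disposing of the truncation up front via Gaussian tail bounds plus H\"older is a legitimate alternative to the paper's device of keeping the term $\mathcal E_{k+1} = \tilde\sigma(Z_{kh})(\Delta W_{k+1} - \Delta W^h_{k+1})$ and bounding it directly. However, there are two real gaps.

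First, the continuous-time interpolation you write, $\tilde Z^h_t = \tilde Z^h_{kh} + \tilde b(\tilde Z^h_{(k+1)h})(t-kh) + \tilde\sigma(\tilde Z^h_{kh})(W_t - W_{kh})$ for $t \in (kh,(k+1)h]$, is \emph{not adapted}: $\tilde Z^h_{(k+1)h}$ is $\F_{(k+1)h}$-measurable, so $\tilde Z^h_t$ and hence $e_t$ fail to be $\F_t$-measurable for $t<(k+1)h$. Applying It\^o's formula to $|e_t|^2$ then produces the stochastic integral $\int e_t(\tilde\sigma(Z_t)-\tilde\sigma(\tilde Z^h_{\lfloor t/h\rfloor h}))\,\D W_t$ whose integrand is anticipating, so it is not a martingale and its expectation is not zero. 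You would need either the split-step reformulation (as in \citep[\S 3.3]{HiMaSt02}) or, as the paper does, a purely discrete iteration: write $Z_{(k+1)h}-Z^h_{(k+1)h}$ via the implicit-step inverse factors $\mathcal I_{j}$, iterate, and control the accumulated martingale sum $\mathcal S_k$ by a Pythagoras/independence argument at step level. This is precisely where the paper's remark that ``no constant may appear in front of $\E[|\mathcal S_k|^2]$'' lives, and why that step only works at $p=2$.

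Second, your explanation of the degraded rate for $p>2$ is not correct. The exponential moments of $Z$ and $\tilde Z^h$ do \emph{not} follow from the polynomial moment bounds \eqref{eq:moment-bound-monotonicity} and \eqref{eq:moment-bound-implicit-scheme}; polynomial moment bounds of all orders do not control $\E[\exp\{\tilde\gamma|Z_t|^{\tilde\eta}\}]$. The exponential growth of $\tilde b$ is instead tamed via the one-step drift-increment estimate \eqref{eq:exp-b-bound} from \Cref{lem:transformed-existence-uniqueness}, which is what actually gives $\E[|B_j|^2]\lesssim h^3$. Moreover, the rate $h^{1/(p(p-1))}$ in \eqref{eq:convergence-rate} does not come from running the Gr\"onwall argument directly at level $p$ and ``balancing a H\"older conjugate against the time regularity''. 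The paper proves only the $L^2$ rate $h^{1/2}$ by the above independence argument, then for $p>2$ applies H\"older in the form
\begin{equation*}
\E\!\left[\sum_{k}\int_{kh}^{(k+1)h}|X_t-X^h_{kh}|^p\,\dt\right]^{\!\frac1p}
\le \E\!\left[\sum_{k}\int_{kh}^{(k+1)h}|X_t-X^h_{kh}|^2\,\dt\right]^{\!\frac{1}{p(p-1)}}
\E\!\left[\sum_{k}\int_{kh}^{(k+1)h}|X_t-X^h_{kh}|^{p+1}\,\dt\right]^{\!\frac{p-2}{p(p-1)}},
\end{equation*}
using the $L^2$ rate together with the uniform $(p+1)$-moment bounds of \Cref{prop:implicit-euler-moment-bounds} and \Cref{lem:transformed-existence-uniqueness}. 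This interpolation is the sole source of the exponent $1/(p(p-1))$; a direct $p$-th moment argument does not obviously produce it, and the paper explicitly warns that its $L^2$ trick does not extend.
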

		
		\begin{remark}\label{rem:rates-transformed-implicit}~
			\begin{enumerate}[label = (\roman*)]
				\item To our knowledge, this is the first result on strong convergence of an implicit scheme in the presence of \emph{both} discontinuities \emph{and} exponential growth in the drift coefficient.
				\item For $p \in [1, 2]$, we obtain the strong $L^p$ convergence rate of $1/2$ that we expect. For $p > 2$, we leave the question of improving the strong $L^p$ convergence rate for future research.
				\item In the case that $b$ is continuous, our result still strengthens \citet[Theorem 2.4]{Hu96}, since we relax the continuous differentiability assumption on the coefficients and consider general $p \in [1, \infty)$, not only $p = 2$.
			\end{enumerate}
		\end{remark}
	
		\begin{proof}[Proof of \Cref{thm:implicit-transformed-convergence}]
		By assumption, $h$ is chosen sufficiently small that $X^h$ and $\tilde X^h$ are well defined by \Cref{well-defined-transf-implicit}, and that the moment bounds shown in \Cref{prop:implicit-euler-moment-bounds} hold.
		
		We first consider $p = 2$. 
		As in the proof of \Cref{thm:existence-uniqueness}, $X_t = G^{-1}(Z_t)$, for all $t \in [0, T]$, where $Z$ is the unique strong solution of \eqref{eq:sde} with coefficients $(\tilde b, \tilde \sigma)$, as defined in \eqref{eq:transformed-coefficients}, and $Z_0 = G(x_0)$. Now, for $k \in \{0, \dotsc, N\}$, define $Z^h_{kh} \coloneqq G(X^h_{kh})$, so that  $Z^h_0 = Z_0$ and, for $k \in \{0, \dotsc, N - 1\}$,
			\begin{equation}\label{eq:semi-implicit-z}
				Z^h_{(k + 1)h} = Z^h_{kh} + h \tilde b(Z^h_{(k + 1)h}) + \tilde \sigma(Z^h_{kh}) \Delta W^h_{k + 1}.
			\end{equation}
			Then, using the Lipschitz property of $G^{-1}$ proved in \Cref{lem:G-properties-growth-disc}, we have
			\begin{equation}\label{eq:lipschitz-estimate}
				\begin{split}
					\E [|X_{kh} - X^h_{kh}|^2] & \leq L_{G^{-1}}^2 \E[|Z_{kh} - Z^h_{kh}|^2],\\
					\E\Biggl[\sum_{k = 0}^{N - 1} \int_{kh}^{(k+1)h}|X_t - X^h_{kh}|^2\D t\Biggr] & \leq L_{G^{-1}}^2\E\Biggl[\sum_{k = 0}^{N - 1} \int_{kh}^{(k+1)h}|Z_t - Z^h_{kh}|^2\D t\Biggr].
				\end{split}
			\end{equation}
			Therefore it suffices to analyse the $L^2$ error between $Z^h$ and $Z$.
			
			For any $k \in \{0, \dotsc, N - 1\}$, we can write
			\begin{equation}
				 Z_{(k+1)h} =  Z_{kh} + \tilde b(Z_{(k+1)h})h+ \tilde \sigma(Z^{h}_{kh})\Delta W^h_{k + 1} +B_{k + 1} +\Sigma_{k + 1} + \Sigma^h_{k + 1} + \mathcal E_{k + 1},
			\end{equation}
			where we define the error terms
			\begin{equation}
				\begin{aligned}
					B_{k + 1} \coloneqq \int_{kh}^{(k+1)h} (\tilde b(Z_s)-\tilde b(Z_{(k+1)h}))\D s, \qquad 
					\Sigma_{k + 1} & \coloneqq \int_{kh}^{(k+1)h} (\tilde \sigma(Z_s)-\tilde \sigma(Z_{kh}))\D W_s,\\
					\Sigma^h_{k + 1} \coloneqq  (\tilde \sigma(Z_{kh})-\tilde \sigma(Z^h_{kh}))\Delta W^h_{k + 1}, \qquad
					\mathcal E_{k + 1} & \coloneqq \tilde \sigma(Z_{kh})[\Delta W_{k + 1} - \Delta W^h_{k + 1}].
				\end{aligned}
			\end{equation}
			Note that, if we define  $\tilde Z^h_{kh} \coloneqq G(\tilde X^h_{kh})$, for $k \in \{0, \dotsc, N\}$, then $\tilde Z^h$ satisfies analogous estimates to \eqref{eq:lipschitz-estimate} and also satisfies \eqref{eq:semi-implicit-z} with the truncated increment $\Delta W^h_{k + 1}$ replaced by $\Delta W_{k + 1}$. Thus we can write
			\begin{equation}
				Z_{(k+1)h} =  Z_{kh} + \tilde b(Z_{(k+1)h})h+ \tilde \sigma(\tilde Z^{h}_{kh})\Delta W_{k + 1} +B_{k + 1} +\Sigma_{k + 1} + \tilde{\Sigma}_{k + 1},
			\end{equation}
			with $\tilde{\Sigma}_{k + 1} \coloneqq \big(\tilde \sigma (Z_{kh}) - \tilde \sigma(\tilde Z^h_{kh})\big)\Delta W_{k + 1}$. We can bound the second moment of $\tilde \Sigma_{k + 1}$ in exactly the same way as we do for $\Sigma^h_{k + 1}$ below. Thus the estimates that we prove for $Z^h$ also hold for $\tilde Z^h$ with only a minor modification in the proof.
			
			Now we consider
			\begin{equation}
				\begin{aligned}
					Z_{(k+1)h}-Z^h_{(k+1)h}& = Z_{kh} -Z^h_{kh} + \bigl(\tilde b(Z_{(k+1)h})-\tilde b(Z^h_{(k+1)h})\bigr)h
					+ B_{k + 1} + \Sigma_{k + 1} + \Sigma^h_{k + 1} + \mathcal E_{k + 1}.
				 \end{aligned}
			\end{equation}
			By \Cref{lem:transformed-coefficients}, there exists $\tilde b^\prime \in L^1_\loc(\R)$ such that, for all $x,y\in\mathbb{R}$,
			\begin{equation}
				\tilde b(y)- \tilde b(x) = \int_x^y \tilde b'(s) \D s=\!\left(\int_0^1 \tilde b'(uy+(1-u)x) \D u\right)\!(y-x).
			\end{equation}
			Hence
			\begin{equation}\label{eq:conv1}
				\begin{aligned}
					&\Big(1 - h\int_0^1 \tilde b^\prime (uZ_{(k+1)h} -(1-u)Z^h_{(k+1)h}) \D u\Big)(Z_{(k+1)h} -Z^h_{(k+1)h})
					\\&\quad= (Z_{kh} -Z^h_{kh} )  
					+B_{k + 1} + \Sigma_{k + 1} + \Sigma^h_{k + 1} + \mathcal E_{k + 1}.
				\end{aligned}
			\end{equation}
			
			By the one-sided Lipschitz property shown in \Cref{lem:transformed-coefficients}, we have that $\tilde b^\prime \leq L_{\tilde b}$ Lebesgue-almost everywhere.
			Then, similarly to \citep[Proof of Theorem 2.4]{Hu96}, since $h<1/L_{\tilde b}$, the inverse 
			\begin{equation}
				\mathcal I_{k + 1} \coloneqq \biggl(1 - h\int_0^1 \tilde b'(uZ_{(k+1)h} -(1-u)Z^h_{(k+1)h}) \D u\biggr)^{-1}
			\end{equation}
			exists and $|\mathcal I_{k + 1}| \leq (1-L_{\tilde b} h)^{-1}$.
			Hence, by \eqref{eq:conv1},
			\begin{equation}
				Z_{(k+1)h} -Z^h_{(k+1)h}=\mathcal{I}_{k + 1} (Z_{kh} -Z^h_{kh} ) 
				+\mathcal{I}_{k + 1} B_{k + 1} + \mathcal{I}_{k + 1}\Sigma_{k + 1} + \mathcal{I}_{k + 1}\Sigma^h_{k + 1} + \mathcal{I}_{k + 1}  \mathcal E_{k + 1},
			\end{equation}
			and iterating gives
			\begin{equation}\label{eq:conv1a}
				\begin{aligned}
					Z_{(k+1)h} -Z^h_{(k+1)h} & =
					\sum_{j=1}^{k+1} \mathcal{I}_{k+1} \dots \mathcal{I}_{j} B_j +
					\sum_{j=1}^{k+1} \mathcal{I}_{k+1} \dots \mathcal{I}_{j} \Sigma_j\\
					& \quad +
					\sum_{j=1}^{k+1} \mathcal{I}_{k+1} \dots \mathcal{I}_{j} \Sigma^h_j +
					\sum_{j=1}^{k+1} \mathcal{I}_{k+1} \dots \mathcal{I}_{j} \mathcal E_j.
				\end{aligned}
			\end{equation}
			We now bound the second moment of the first term. By the Cauchy--Schwarz inequality,
			\begin{equation}\label{eq:conv2}
				\Big|\sum_{j=1}^{k+1} \mathcal{I}_{k+1} \dots \mathcal{I}_{j} B_j\Big|^2 \leq \sum_{j=1}^{k+1} | \mathcal{I}_{k+1} \dots \mathcal{I}_{j}|^2 \cdot \sum_{j=1}^{k+1} |B_j|^{2} \leq (k + 1)(1-L_{\tilde b}h)^{-2(k+1)}\sum_{j=1}^{k+1} |B_j|^{2}.
			\end{equation}
			Since $L_{\tilde b} h \leq \frac12$, setting $c_1 \coloneqq 4L_{\tilde b}$ gives
			\begin{equation}\label{eq:conv3}
				\begin{aligned}
					(1-L_{\tilde b}h)^{-2(k+1)} \leq \exp\!\left\{2(k+1)\frac{L_{\tilde b}h}{1 - L_{\tilde b}h}\right\}\! \leq \exp\!\left\{c_1 T\right\}\!.
				\end{aligned}
			\end{equation}
			For each $j \in \{1, \dotsc, N\}$, applying the Cauchy--Schwarz inequality and the estimate \eqref{eq:exp-b-bound} from \Cref{lem:transformed-existence-uniqueness} gives us
			\begin{align}
				\E\bigl[|B_j|^2\bigr] & \leq h \int_{(j - 1)h}^{jh}\E\Bigl[|\tilde b(Z_s) - \tilde b(Z_{jh})|^2\Bigr] \D s \leq C_2(2, T) h^3. 
			\end{align}
			Therefore, setting $c_2 = C_2(2, T)$, we have
			\begin{equation}\label{eq:conv5}
				\begin{aligned}
					\E\Biggl[\Bigl|\sum_{j=1}^{k+1} \mathcal{I}_{k+1} \dots \mathcal{I}_{j} B_j\Bigr|^2\Biggr]
					\le
					c_2 T^2 \exp(c_1T) h.
				\end{aligned}
			\end{equation}
			To estimate the remaining terms in \eqref{eq:conv1a}, we set $\mathcal S_0=0$ and, for all $k\in\{1,\dots,N\}$, denote
			\begin{equation}\label{eq:conv6}
				\begin{aligned}
					\mathcal S_k &\coloneqq
					\sum_{j=1}^{k} \mathcal{I}_{k} \dots \mathcal{I}_{j} \Sigma_j
					+
					\sum_{j=1}^{k} \mathcal{I}_{k} \dots \mathcal{I}_{j} \Sigma^h_j
					+
					\sum_{j=1}^{k} \mathcal{I}_{k} \dots \mathcal{I}_{j} \mathcal E_j
					.
				\end{aligned}
			\end{equation}
			Observe that, for all $k\in\{0,\dots,N-1\}$,
			\begin{equation}\label{eq:conv7}
				\mathcal S_{k+1} = \mathcal{I}_{k+1} (\mathcal S_k + \Sigma_{k+1} + \Sigma^h_{k+1} + \mathcal E_{k+1}).
			\end{equation}
			By independence of Brownian increments, we have that $\mathcal S_k$ is independent of $\Sigma_{k+1}$, $\Sigma^h_{k+1}$, and $ \mathcal E_{k+1}$. Using Pythagoras' theorem and then the Cauchy--Schwarz inequality, we see that the second moment is bounded by
			\begin{equation}\label{eq:conv8}
				\mathbb{E} [|\mathcal S_{k+1}|^{2}]
				\le	
				(1-L_{\tilde b}h)^{-{2} }\bigl(\mathbb{E}[| \mathcal S_k|^{2}] + 3\mathbb{E}[|\Sigma_{k+1}|^{2}] + 3\mathbb{E}[|\Sigma^h_{k+1}|^{2}]+ 3\mathbb{E}[|\mathcal E_{k+1}|^{2}]\bigr).
			\end{equation}
			We note that, in order to iterate this bound, it is essential that no constant appears in front of the term $\mathbb{E}[| \mathcal S_k|^{2}]$. Thus the above argument using independence does not directly generalise to $p$\textsuperscript{th} moments for $p \neq 2$.
			
			We now bound the right-hand side of \eqref{eq:conv8} term by term. By \Cref{lem:transformed-coefficients}, $\tilde \sigma$ is Lipschitz with constant $L_{\tilde \sigma}$. Therefore, by the It\^o isometry,
			\begin{equation}
				\begin{aligned}
					\mathbb{E}[|\Sigma_{k + 1}|^{2}]
					& =
					\E \biggl[\int_{kh}^{(k+1)h} |\tilde\sigma(Z_s)-\tilde\sigma(Z_{kh})|^2 \D s\biggr]
					\le
					L_{\tilde \sigma}^2 \E \Biggl[\int_{kh}^{(k+1)h} |Z_s - Z_{kh}|^2 \D s\Biggr].
				\end{aligned}
			\end{equation}
			Then the bound
			\eqref{eq:one-step-error-exp} gives a constant $c_3\in(0,\infty)$ such that
			\begin{equation}\label{eq:conv9}
				\begin{aligned}
					\mathbb{E}[|\Sigma_{k + 1}|^{2}]
					&\le
					c_3 L_{\tilde \sigma}^2 \int_{kh}^{(k+1)h} |s-kh| \D s
					\le
					c_3 L_{\tilde \sigma}^2 h^2.
				\end{aligned}
			\end{equation}
			
			By the independence of $Z_{kh}$ from the increments $\Delta W_{k + 1}$ and $\Delta W^h_{k + 1}$, together with the linear growth of $\tilde \sigma$ and the second moment bound from \Cref{prop:implicit-euler-moment-bounds}, there exists $c_4\in(0,\infty)$ such that
			\begin{equation}
				\begin{aligned}
					\mathbb{E}[|\mathcal E_{k + 1}|^{2}]
					& =
					\mathbb{E}[|\tilde \sigma(Z_{kh})|^{2}] \cdot \mathbb{E}[|\Delta W_{k + 1} - \Delta W^h_{k + 1}|^{2}]
					\le
					c_4  \mathbb{E}[|\Delta W_{k  + 1} - \Delta W^h_{k + 1}|^{2}].
				\end{aligned}
			\end{equation}
			By \citep[Lemma A.3]{BaKaRo22}, we have the bound $\mathbb{E}[|\Delta W_{k + 1} - \Delta W^h_{k + 1}|^{2}] \leq 2 h^3$, and so there exists $c_5\in(0,\infty)$ such that
			\begin{equation}\label{eq:conv11}
				\mathbb{E}[|\mathcal E_{k + 1}|^{2}]\le c_5  h^3.
			\end{equation}
			
			An application of the triangle inequality then yields the existence of a constant $c_6 \in (0, \infty)$ such that $\mathbb{E}[|\Delta W^h_{k + 1}|^{2}] \leq 2(\mathbb{E}[|\mathcal E_{k + 1}|^{2}] + \mathbb{E}[|\Delta W_{k + 1}|^{2}])  \leq c_6 h$.
			This, the independence of $Z_{kh}$ and  $Z^h_{kh}$ from the increment $\Delta W^h_{k + 1}$, and the Lipschitz continuity of $\tilde\sigma$ yield the bound
			\begin{equation}\label{eq:conv10}
				\begin{aligned}
					\mathbb{E}[|\Sigma^h_{k + 1}|^2]
					& =
					\mathbb{E}[|\tilde\sigma(Z_{kh})-\tilde\sigma(Z^h_{kh})|^{2}] \cdot \mathbb{E}[|\Delta W^h_{k + 1}|^{2}] 
					\\&\le
					c_6 L_{\tilde \sigma}^{2}  \mathbb{E}[|Z_{kh}-Z^h_{kh}|^{2}]  h.
				\end{aligned}
			\end{equation}
			
			Combining \eqref{eq:conv1a} with the definition of $\mathcal S_k$ and the bound \eqref{eq:conv5} gives
			\begin{equation}\label{eq:conv12}
				\begin{aligned}
					\E[|Z_{kh} -Z^h_{kh}|^{2}]\le 2 c_2T^2\exp(c_1T) h + 2 \mathbb{E} [|\mathcal S_{k}|^2].
				\end{aligned}
			\end{equation}
			We now substitute the bounds \eqref{eq:conv9}, \eqref{eq:conv11}, and \eqref{eq:conv10} into \eqref{eq:conv8}, to get
			\begin{equation}
				\begin{aligned}
					&\mathbb{E} [|\mathcal S_{k+1}|^2] \le
					(1-L_{\tilde b}h)^{-{2}}\bigl(\mathbb{E}[| \mathcal S_k|^2] + 3c_3L_{\tilde \sigma}^{2} h^2 + 3c_6 L_{\tilde \sigma}^{2}  \mathbb{E}[|Z_{kh}-Z^h_{kh}|^{2}]  h +  3c_5  h^3\bigr).
				\end{aligned}
			\end{equation}
			Then \eqref{eq:conv12} gives
			\begin{equation}\label{eq:conv13}
				\begin{aligned}
					\mathbb{E} [|\mathcal S_{k+1}|^{2}] & \le
					(1-L_{\tilde b}h)^{-{2}} (1+6c_6 L_{\tilde \sigma}^{2}  h)\mathbb{E}[| \mathcal S_k|^2] + c_7 h^2 (1-L_{\tilde b}h)^{-{2}},
				\end{aligned}
			\end{equation}
			where $c_7 \coloneqq 3(c_3 L_{\tilde \sigma}^2 + 2c_2c_6 L_{\tilde \sigma}^{2}  T^2\exp(c_1T) + c_5)$.
			Since $L_{\tilde b} h \leq \frac12$, we can check that $1 \leq (1-L_{\tilde b}h)^{-{2}} (1+6c_6 L_{\tilde \sigma}^{2}  h) \leq 1 + c_8 h$, where $c_8 \coloneqq 24 c_6L_{\tilde \sigma}^2$, and we have
			\begin{equation}\label{eq:conv14}
				\begin{aligned}
					&\mathbb{E} [|\mathcal S_{k+1}|^{2}] \le
					(1 + c_8 h)\mathbb{E}[| \mathcal S_k|^2] + 4 c_7 h^2.
				\end{aligned}
			\end{equation}
			Iterating, we obtain
			\begin{equation}
				\begin{aligned}
					\mathbb{E} [|\mathcal S_{k + 1}|^2]
					&\le
					4 c_7 h^2 \sum_{j=0}^{k}(1+c_8 h)^j
					\le
					4 c_7 h^2 \sum_{j=0}^{k}\exp(c_8 h N)
					\le	4 c_7 T \exp(c_8 T) h.
				\end{aligned}
			\end{equation}
			Combining this with \eqref{eq:conv12} gives a constant $c_{9}\in(0,\infty)$ such that
			\begin{equation}\label{eq:conv15}
				\mathbb{E} [|Z_{kh}-Z^h_{kh}|^2]
				\le
				c_{9} h.
			\end{equation}
			Now by \eqref{eq:one-step-error-exp}, for $k \in \{0, \dotsc, N - 1\}$ and $t \in [kh, (k+1)h)$, there exists a constant $c_{10} \in (0, \infty)$ such that $\E[|Z_{t} - Z_{kh}|]^2 \leq c_{10}h$. Combining this with \eqref{eq:conv15}, we have
			\begin{equation}\label{eq:conv16}
				\begin{split}
					\E\!\left[\sum_{k = 0}^{N - 1} \int_{kh}^{(k+1)h}|Z_t - Z^h_{kh}|^2\D t\right]\! & \leq 2\sum_{k = 0}^{N - 1} \int_{kh}^{(k+1)h}\!\left(\E[|Z_t - Z_{kh}|^2] + \E[|Z^h_{kh} - Z_{kh}|^2]\right)\!\D t\\
					& \leq 2 N h (c_{9} h + c_{10} h) = \tilde C h,
				\end{split}
			\end{equation}
			for $\tilde C \coloneqq 2T(c_{9} + c_{10})$.
			
			The estimates \eqref{eq:lipschitz-estimate} applied to \eqref{eq:conv15} and \eqref{eq:conv16} yield
			\begin{equation}\label{eq:conv17}
				\sup_{k \in \{1, \dotsc, N\}}\mathbb{E}\bigl[|X_{kh}- X^h_{kh}|^{2}\bigr]^{1/2} \leq L_{G^{-1}} c_9^{1/2}h^{1/2},
			\end{equation}
			and
			\begin{equation}\label{eq:conv18}
				\E\Biggl[\sum_{k = 0}^{N - 1} \int_{kh}^{(k+1)h}|X_t - X^h_{kh}|^2\D t\Biggr]^{1/2} \leq L_{G^{-1}}\tilde C^{1/2} h^{1/2}.
			\end{equation}
			
			For $p \in [1, 2)$, we obtain the same rate of convergence by applying Jensen's inequality.
			For $p \in (2, \infty)$, we apply H\"older's inequality, followed by the $L^2$ bound \eqref{eq:conv18} and the $(p+1)$-moment bounds for the scheme and the solution of the SDE that are shown in \Cref{prop:implicit-euler-moment-bounds} and \Cref{lem:transformed-existence-uniqueness}, respectively. Hence, there exists a constant $C \in (0, \infty)$ such that
			\begin{equation}
				\begin{split}
					& \E\Biggl[\sum_{k = 0}^{N - 1} \int_{kh}^{(k+1)h}|X_t - X^h_{kh}|^p\D t\Biggr]^{\frac 1p} = \E\Biggl[\sum_{k = 0}^{N - 1} \int_{kh}^{(k+1)h}|X_t - X^h_{kh}|^{\frac {2}{p - 1}} \cdot |X_t - X^h_{kh}|^{\frac{(p - 2)(p + 1)}{p - 1}}\D t\Biggr]^{\frac 1p}\\
					& \qquad \leq \E\Biggl[\sum_{k = 0}^{N - 1} \int_{kh}^{(k+1)h}|X_t - X^h_{kh}|^2\D t\Biggr]^{\frac{1}{p(p - 1)}} \E\Biggl[\sum_{k = 0}^{N - 1} \int_{kh}^{(k+1)h}|X_t - X^h_{kh}|^{p + 1}\D t\Biggr]^{\frac{p - 2}{p(p - 1)}}\\
					& \qquad \leq C h^{\frac{1}{p(p - 1)}}.
				\end{split}
			\end{equation}
			In the same way, we show that
			\begin{equation}
				\sup_{k \in \{1, \dotsc, N\}}\mathbb{E}\bigl[|X_{kh}- X^h_{kh}|^{p}\bigr]^{\frac1p} \leq C h^{\frac{1}{p(p - 1)}},
			\end{equation}
			which concludes the proof.
		\end{proof}
		
		\begin{remark}[additive noise]\label{rem:additive-noise}
			Under the additional assumption that $\sigma \equiv 1$, we only require estimates for the error terms $B_{k}$, but not for $\Sigma_k, \tilde \Sigma^h_k$, in the proof of convergence of the scheme $\tilde X^h$. Repeated applications of H\"older's inequality yield $p$\textsuperscript{th} moment estimates analogous to \eqref{eq:conv5} with order $h^{\frac p2}$. Proceeding as in the proof of \Cref{thm:implicit-transformed-convergence}, we find the strong $L^p$ convergence rate $1/2$, for all $p \geq 1$, for the scheme $\tilde X^h$ in the case of additive noise.
		\end{remark}
		
	\subsection{Application to optimal transport}\label{sec:bcot-growth-disc}
		We next apply the transformed monotone semi-implicit Euler--Maruyama scheme from \Cref{def:mono-si-em} to finding the optimiser of \eqref{eq:bcot-cont} between the laws of SDEs satisfying \Cref{ass:growth-disc}. To this end, we will verify the following properties of the scheme.
		
		\begin{lemma}\label{lem:stochdom}
			Suppose that the coefficients of \eqref{eq:sde} satisfy \Cref{ass:growth-disc}. Then, for all $h$ sufficiently small, the transformed monotone semi-implicit Euler--Maruyama scheme \eqref{eq:implicit-scheme} is stochastically increasing and, for $k \in \{0, \dotsc, N - 1\}$, the map $\Delta W^h_{k+1} \mapsto X^h_{(k+1)h}$ is increasing.
		\end{lemma}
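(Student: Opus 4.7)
The plan is to reduce everything to the transformed variable $Z^h_{kh} := G(X^h_{kh})$. A direct manipulation of \eqref{eq:implicit-scheme} gives the equivalent recursion
\begin{equation}
    (\id - h\tilde b)(Z^h_{(k+1)h}) = Z^h_{kh} + \tilde\sigma(Z^h_{kh})\, \Delta W^h_{k+1}.
\end{equation}
By \Cref{lem:transformed-coefficients}, $\tilde b$ is one-sided Lipschitz with constant $L_{\tilde b}$, so \Cref{lem:invertible-implicit} makes $\Psi_h := (\id - h\tilde b)^{-1}$ well-defined and strictly increasing as soon as $h < 1/L_{\tilde b}$. Setting $F(x, w) := G^{-1}\bigl(\Psi_h(G(x) + \tilde\sigma(G(x))\, w)\bigr)$, one obtains $X^h_{(k+1)h} = F(X^h_{kh}, \Delta W^h_{k+1})$, so both claims of the lemma amount to monotonicity properties of $F$.

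For the assertion about $\Delta W^h_{k+1} \mapsto X^h_{(k+1)h}$, I would just observe that $G^{-1}$ and $\Psi_h$ are strictly increasing (\Cref{lem:G-properties-growth-disc} and \Cref{lem:invertible-implicit}) and that $\tilde\sigma \geq 0$ because $\tilde\sigma = (\sigma G') \circ G^{-1}$ with $\sigma \geq 0$ and $G' > 0$; therefore $w \mapsto F(x,w)$ is non-decreasing for every fixed $x$. This handles the second claim immediately.

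For the stochastically increasing property, I would prove that $x \mapsto F(x, w)$ is non-decreasing for every $w$ in the support $[-A_h, A_h]$ of $\Delta W^h_{k+1}$, provided $h$ is small. Since $G^{-1}$ and $\Psi_h$ are strictly increasing, this reduces to showing that $H_w \colon x \mapsto G(x) + \tilde\sigma(G(x))\, w$ is non-decreasing. By \Cref{lem:G-properties-growth-disc} and \Cref{lem:transformed-coefficients}, both $G$ and $\tilde\sigma$ are absolutely continuous, with $G' > 0$ bounded below (strictly positive by the choice of $c_0$) and $|\tilde\sigma'| \leq L_{\tilde\sigma}$ almost everywhere. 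Hence $H_w$ is absolutely continuous with a.e.\ derivative
\begin{equation}
    H_w'(x) = G'(x)\bigl(1 + \tilde\sigma'(G(x))\, w\bigr) \geq G'(x)\bigl(1 - L_{\tilde\sigma} A_h\bigr).
\end{equation}
Since $A_h = 4\sqrt{-h\log h} \to 0$ as $h \to 0$, one can fix $h$ small enough that simultaneously $h < 1/L_{\tilde b}$ and $L_{\tilde\sigma} A_h < 1$; for such $h$, $H_w' > 0$ a.e.\ and thus $H_w$ is strictly increasing. Stochastic monotonicity in the sense of \Cref{def:stoch-dom} then follows at once: for $x \leq x'$ and any $a \in \R$, the pathwise inequality $F(x, \Delta W^h_{k+1}) \leq F(x', \Delta W^h_{k+1})$ yields the desired ordering of conditional distribution functions.

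The only real subtlety is that without the truncation $H_w$ is \emph{not} monotone for large $|w|$, which is precisely why the monotone version of the scheme uses $\Delta W^h_{k+1}$ rather than the raw Brownian increment; the choice $A_h = 4\sqrt{-h\log h}$ of \Cref{def:trunc-bm} makes the threshold $L_{\tilde\sigma}A_h < 1$ reachable for small $h$. Beyond this, the argument is merely a chain of increasing maps, so I anticipate no further obstacles.
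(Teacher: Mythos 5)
Your proof is correct and follows essentially the same route as the paper: rewrite \eqref{eq:implicit-scheme} as a composition of increasing maps (here $G^{-1}\circ\Psi_h\circ H_w$), verify each factor is increasing for $h$ small enough using the Lipschitz bound on $\tilde\sigma$ and $L_{\tilde\sigma}A_h < 1$, and read off the monotonicity in $\Delta W^h_{k+1}$ from $\tilde\sigma \ge 0$. The only cosmetic differences are that you fold $G$ and the diffusion step into a single map $H_w$ and argue via its a.e.\ derivative, whereas the paper keeps the four maps separate and bounds $\tilde\sigma(x) - \tilde\sigma(y)$ directly with the Lipschitz constant; these are interchangeable.
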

		
		\begin{proof}
			By \Cref{lem:transformed-coefficients}, $\tilde \sigma$ is Lipschitz with constant $L_{\tilde \sigma} \in (0, \infty)$. Fix $k \in \{0, \dotsc, N - 1\}$ and $x, y \in \R$ with $x < y$. Then, using \Cref{def:trunc-bm} of $\Delta W^h$,
			\begin{equation}
				\begin{split}
					x + \tilde \sigma(x) \Delta W^h_{k + 1} - (y + \tilde \sigma(y) \Delta W^h_{k + 1}) & = x - y + (\tilde \sigma(x) - \tilde \sigma(y))\Delta W^h_{k + 1}\\
					& \leq (1 + L_{\tilde \sigma} \Delta W^h_{k + 1})(x - y)\\
					& \leq (1 - L_{\tilde \sigma} A_h)(x - y).
				\end{split}
			\end{equation}
			For $h > 0$ sufficiently small that $1 - L_{\tilde \sigma}A_h > 0$, we have that the map $(\id + \Delta W^h_{k + 1} \tilde \sigma)$ is strictly increasing with strictly increasing inverse. Let us also take $h$ sufficiently small that $(\id - h \tilde b)^{-1}$ is well defined and strictly increasing by \Cref{lem:invertible-implicit}. Then we can rewrite \eqref{eq:implicit-scheme} as
			\begin{equation}\label{eq:concat-increasing}
				X^h_{(k + 1)h} = G^{-1} \circ (\id - h \tilde b)^{-1} \circ (\id + \Delta W^h_{k + 1} \tilde \sigma) \circ G (X^h_{kh}).
			\end{equation}
			Recall that $G$ and $G^{-1}$ are strictly increasing by \Cref{lem:G-properties-growth-disc}. Thus $X^h_{(k + 1)h}$ is obtained from $X^h_{kh}$ by the concatenation of four strictly increasing maps. This shows that $(X^h_{kh})_{k \in \{0, \dotsc,  N\}}$ is stochastically increasing. Noting that $\tilde \sigma$ is non-negative, the second claim also follows from \eqref{eq:concat-increasing}.
		\end{proof}

		We can now state and prove our optimality result.
		
		\begin{theorem}\label{thm:optimality-growth-disc}
			Suppose that the coefficients $(b, \sigma)$ and $(\bar b, \bar \sigma)$ of \eqref{eq:sde-correlated} satisfy \Cref{ass:growth-disc} and write $\mu, \nu$ for the laws of the respective strong solutions $X$, $\bar X$. Let $c \colon [0, T] \times \R \times \R \to \R$ satisfy \Cref{ass:cost}. Then the synchronous coupling attains the value $V_c(\mu, \nu)$ of the bicausal optimal transport problem \eqref{eq:bcot-cont}.
		\end{theorem}
		
		\begin{proof}
			We proved strong existence and pathwise uniqueness in \Cref{thm:existence-uniqueness}. For $N \in \N$, $h = T/N$, denote by $(X^h_{kh})_{k \in \{1, \dotsc, N\}}$ and $(\bar X^h_{kh})_{k \in \{1, \dotsc, N\}}$ the transformed monotone semi-implicit Euler--Maruyama scheme for the solutions $X$ and $\bar X$ of \eqref{eq:sde}, respectively, as defined in \Cref{def:mono-si-em}. By \Cref{lem:stochdom}, $(X, \bar X)$ and  $(X^h, \bar X^h)$ satisfy \Cref{ass:it-1,ass:it-2}. By \Cref{thm:implicit-transformed-convergence}, \Cref{ass:it-3} is also satisfied. We conclude by applying \Cref{prop:general-optimality}.
		\end{proof}

\section{SDEs with bounded measurable drift}\label{sec:zvonkin}
	We now consider a second class of irregular coefficients. We only require the drift coefficient to be bounded and measurable, and we allow the diffusion coefficient to be H\"older continuous in space rather than Lipschitz, but we do require a uniform non-degeneracy condition and boundedness of the diffusion, which was not needed in \Cref{sec:growth-disc}. The coefficients may be time-dependent. In this setting, we again obtain optimality of the synchronous coupling for \eqref{eq:bcot-cont} between the laws of SDEs.
	Similarly to \Cref{sec:growth-disc}, the proof relies on a transformation of the SDEs. Here we use the drift-removing transformation that is defined by \citet[Theorem 1]{Zv74}. We solve a transformed bicausal optimal transport problem, which we show to be equivalent to \eqref{eq:bcot-cont}. This is a different approach from \Cref{sec:growth-disc}; in \Cref{sec:counterexamples} we show that the key monotonicity result required for the former approach is not satisfied in the present setting, and we provide a counterexample to show that a weaker monotonicity condition is not sufficient.
	
	We make the following assumptions on the coefficients $(b, \sigma)$ of the SDE \eqref{eq:sde}, following Zvonkin \citep{Zv74}.
	
	\begin{assumption}\label{ass:zvonkin}Suppose that the coefficients $(b, \sigma)$ satisfy the following:
		\begin{enumerate}[label = (\roman*)]
			\item $b$ is bounded and measurable;
			\item $\sigma$ is bounded and continuous and, for some $\alpha \in [1/2, 1]$, $x \mapsto \sigma_t(x)$ is $\alpha$-H\"older continuous, uniformly in $t \in [0, T]$;
			\item $\sigma$ is uniformly non-degenerate; i.e.\ there exists $C > 0$ such that $\sigma^2_t(x) \geq C$, for all $(t, x) \in [0, T] \times \R$.
		\end{enumerate}
	\end{assumption}
	
	\begin{remark}\label{rem:zvonkin}~
		\begin{enumerate}[label = (\roman*)]
			\item Under \Cref{ass:zvonkin}, \citet[Theorem 4]{Zv74} proves existence and uniqueness of strong solutions of the SDE \eqref{eq:sde} via a transformation that removes the drift.
			\item A similar transformation to that of \citep{Zv74} was used by \citet{Ta83} to analyse the numerical approximation of SDEs. More recently, \citet{NgTa17,NgTa17b,NgTa19,NeSz21,BaKaRo22,GeLaLi23} use a similar drift-removing transformation in the numerical approximation of SDEs with irregular coefficients, under stronger conditions than \Cref{ass:zvonkin}.
			\item For the SDE \eqref{eq:sde} with coefficients satisfying \Cref{ass:zvonkin}, strong convergence rates for the Euler--Maruyama scheme have been obtained by \citet{GyRa11} under additional conditions on the drift, and by \citet{DaGe20,DaGeLe23} for time-homogeneous coefficients under additional conditions on the diffusion.
		\end{enumerate}
		
	\end{remark}
	
	\begin{example}
		Consider the SDE
		\begin{equation}
			\ds X_t = \sign(X_t)\D t + \bigl(1 + \sqrt{|X_t|}\ind{|X_t| \leq 4} + 2\ind{|X_t|>4}\bigr) \D W_t; \quad X_0 = x_0 \in \R.
		\end{equation}
		The coefficients of this SDE satisfy \Cref{ass:zvonkin}.
	\end{example}
	
	Suppose that $(b, \sigma)$ satisfy \Cref{ass:zvonkin}, and let $X$ be the unique strong solution of \eqref{eq:sde} with coefficients $(b, \sigma)$. \citet[Theorem 1]{Zv74} gives the existence of a function $u\colon[0, T] \times \R \to \R$ that is in the Sobolev space $W^{1, 2}_p([0, T] \times D)$, for any bounded domain $D \subset \R$ and $p > \frac32$, and solves the PDE problem
	\begin{equation}\label{eq:zvonkin-pde}
		\begin{split}
			\partial_t u_t(x) + \partial_x u_t(x)b_t(x) + \frac12\partial_{xx}u_t(x)\sigma_t^2(x) = 0, & \quad (t, x) \in [0, T] \times \R,\\
				u_T(x) = x, & \quad x \in \R.
		\end{split}
	\end{equation}
	By \cite[Theorem 2]{Zv74}, there exists an inverse function $v \colon [0, T] \times \R \to \R$ satisfying $(v_t \circ u_t)(x) = x$ for all $t \in [0, T]$, $x \in \R$.
	
	Define $Z_t \coloneqq u_t(X_t)$ for all $t \in [0, T]$. Then \cite[Theorem 3]{Zv74} shows that we can apply It\^o's formula to obtain
	\begin{equation}\label{eq:transf-sde-zvonkin}
			\ds Z_t = \tilde \sigma_t(Z_t)\D W_t, \quad Z_0 = u_0(x_0),
	\end{equation}
	where $\tilde \sigma_t  \coloneqq (\partial_x u_t \cdot \sigma_t) \circ v_t$, for all $t \in [0, T]$.
	
	We recall the following properties of $u$ and $v$ from \cite[Theorem 2]{Zv74} without proof.
	\begin{lemma}\label{lem:G-properties}
		Suppose that $(b, \sigma)$ satisfy \Cref{ass:zvonkin}. Then, uniformly in $t \in [0, T]$,
		\begin{enumerate}[label = (\roman*)]
			\item \label{it:G1} $x \mapsto u_t(x)$ and $x \mapsto v_t(x)$ are strictly increasing and Lipschitz continuous;
			\item \label{it:G2} $x \mapsto \partial_x u_t(x)$ is bounded from above, bounded away from zero, and $\beta$-H\"older continuous, for any $\beta \in (0, 1)$;
		\end{enumerate}
		and, uniformly in $x \in D$, for any bounded domain $D \subset \R$,
		\begin{enumerate}[label = (\roman*), start = 3]
			\item \label{it:G3} $t \mapsto u_t(x)$, $t \mapsto v_t(x)$, and $t \mapsto \partial_x u_t(x)$ are $\beta$-H\"older continuous for any $\beta \in (0, 1)$.
		\end{enumerate}
	\end{lemma}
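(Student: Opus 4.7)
The strategy is to read off all four properties from the regularity of $u$ that was already obtained by solving the backward PDE \eqref{eq:zvonkin-pde} in Zvonkin's theorem, supplemented by a few elementary arguments. Since $u \in W^{1,2}_p([0,T]\times D)$ for every bounded domain $D$ and every $p > 3/2$, the parabolic Sobolev embedding (see, e.g., Ladyzhenskaya--Solonnikov--Uraltseva) yields that $u$ and $\partial_x u$ are locally H\"older continuous in $(t,x)$ for any exponent $\beta \in (0,1)$, which takes care of the spatial H\"older continuity in~\ref{it:G2} and the temporal H\"older continuity of $u$ and $\partial_x u$ in~\ref{it:G3}.

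The key quantitative step, and the main obstacle, is showing that $\partial_x u_t$ is bounded and bounded away from zero \emph{uniformly} in $t$ and $x$. For the upper bound, I would differentiate \eqref{eq:zvonkin-pde} formally in $x$ to obtain a linear parabolic equation for $w \coloneqq \partial_x u$ with bounded (measurable) coefficients, terminal condition $w_T \equiv 1$, and then apply the maximum principle in the form of a Feynman--Kac representation: since $b$ is bounded and $\sigma$ is bounded and uniformly non-degenerate, $w$ can be written as a conditional expectation of a bounded multiplicative functional against an SDE, giving two-sided bounds $0 < c \le w_t(x) \le C < \infty$ uniformly. A cleaner route is to use the It\^o--Zvonkin identity
\begin{equation}
    u_t(x) = \E\!\left[X^{t,x}_T\right]\! \quad\text{where}\quad \D X^{t,x}_s = b_s(X^{t,x}_s)\D s + \sigma_s(X^{t,x}_s)\D W_s,\; X^{t,x}_t = x,
\end{equation}
since $u$ is the unique solution of \eqref{eq:zvonkin-pde} with terminal data $x$; a stability argument for SDEs with bounded drift and non-degenerate diffusion then yields the uniform two-sided bound on $\partial_x u_t$.

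From the uniform lower bound on $\partial_x u_t$ I obtain that $x \mapsto u_t(x)$ is strictly increasing, and from the upper bound that it is Lipschitz with constant independent of $t$, which is the first half of~\ref{it:G1}. The inverse $v_t$ then inherits strict monotonicity automatically, and its Lipschitz constant is $1/\inf_x \partial_x u_t(x)$, which is again uniformly controlled; this completes~\ref{it:G1}. For the remaining time-H\"older continuity of $v$ in~\ref{it:G3}, I use the identity $v_t(u_t(x)) = x$: for $s,t \in [0,T]$ and fixed $y$ in a bounded set,
\begin{equation}
    v_t(y) - v_s(y) = v_t(y) - v_s(u_s(v_s(y))) \;\text{ and }\; y = u_s(v_s(y)) = u_t(v_t(y)),
\end{equation}
so that $|v_t(y) - v_s(y)|$ is bounded by $(\inf \partial_x u)^{-1} |u_t(v_s(y)) - u_s(v_s(y))|$, which is H\"older in $|t-s|$ by the temporal regularity of $u$ already established. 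Throughout, the dependencies on the bounded domain $D$ enter only via the uniform bounds from \Cref{ass:zvonkin}, so all constants are uniform in $x \in D$ as claimed. In the paper itself this entire chain is subsumed by citing \cite[Theorem 2]{Zv74}, so no computation needs to be reproduced.
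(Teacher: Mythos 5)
The paper itself offers no proof of \Cref{lem:G-properties}; it simply records the statements and cites \cite[Theorem 2]{Zv74}. You correctly identify this at the end, so your final conclusion matches the paper. However, you also offer a proof sketch, and the core quantitative step in it---the uniform two-sided bound $0<c\le\partial_x u_t(x)\le C<\infty$, from which \ref{it:G1} and the non-degeneracy part of \ref{it:G2} follow---is not actually established by either route you propose, and both routes fail for precisely the reason Zvonkin's PDE machinery is needed in the first place.

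Route (a), formally differentiating \eqref{eq:zvonkin-pde} in $x$, does not produce a linear parabolic equation with bounded coefficients. Writing $w=\partial_x u$ and differentiating gives a drift term $b+\sigma\sigma_x$ and a zeroth-order coefficient $\partial_x b$; under \Cref{ass:zvonkin} the drift $b$ is only bounded measurable, so $\partial_x b$ need not exist even as a function, and $\sigma$ is only H\"older, so $\sigma_x$ need not exist either. There is no honest PDE for $w$ here, so maximum principle or Feynman--Kac arguments applied to it are not available. Route (b), the representation $u_t(x)=\E[X^{t,x}_T]$, is correct as a formula, but the "stability argument" you invoke to differentiate it in $x$ is circular: pathwise or $L^p$ differentiability of the flow $x\mapsto X^{t,x}_T$ for bounded measurable $b$ and H\"older $\sigma$ is \emph{not} elementary---it is exactly the output of the Zvonkin--Veretennikov theory, which is obtained \emph{from} the PDE estimates on $u$, not the other way around. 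For Lipschitz data one would use the stochastic exponential of $\int b'\,ds+\int\sigma'\,dW$ for the tangent process, but that requires $b',\sigma'$, which here do not exist.

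The parts of your sketch concerning the parabolic Sobolev embedding $W^{1,2}_p\hookrightarrow C^{\beta,1+\beta}$ (valid since $p>3/2$ can be taken arbitrarily large), and the chain $|v_t(y)-v_s(y)|\le(\inf\partial_x u)^{-1}|u_t(v_s(y))-u_s(v_s(y))|$ deducing temporal H\"older continuity of $v$ from that of $u$ plus the lower gradient bound, are fine once the gradient bounds are granted. But those bounds are the heart of the matter and cannot be obtained by the routes you sketch; they have to come from the cited result.
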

	
	The following is an immediate consequence of \Cref{lem:G-properties}.
		
	\begin{corollary}\label{cor:lipschitz-coeffs}
		Suppose that $(b, \sigma)$ satisfies \Cref{ass:zvonkin}. Then the diffusion coefficient $\tilde \sigma$ of the transformed SDE \eqref{eq:transf-sde-zvonkin} is continuous, bounded from above, and bounded away from zero. Moreover, $x \mapsto \tilde \sigma_t(x)$ is $\alpha$-H\"older continuous, uniformly in $t \in [0, T]$.
	\end{corollary}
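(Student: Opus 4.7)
All three claims are direct consequences of \Cref{lem:G-properties} combined with elementary facts about composition and multiplication of H\"older and Lipschitz functions, applied to the representation $\tilde \sigma_t = (\partial_x u_t \cdot \sigma_t) \circ v_t$.

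For the boundedness assertions, I observe that $\partial_x u_t$ is bounded above and bounded away from zero uniformly in $t$ by \Cref{lem:G-properties}\ref{it:G2}, while \Cref{ass:zvonkin} gives that $\sigma_t$ is bounded above and, via $\sigma_t^2 \geq C > 0$, satisfies $|\sigma_t| \geq \sqrt{C}$. The product inherits both a uniform upper bound and a uniform lower bound in absolute value, and composition with the pointwise map $v_t$ preserves these properties.

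For the spatial H\"older regularity, I fix $t \in [0, T]$ and set $F_t \coloneqq \partial_x u_t \cdot \sigma_t$. By \Cref{lem:G-properties}\ref{it:G2}, $\partial_x u_t$ is bounded and $\beta$-H\"older for any $\beta \in (0, 1)$; by \Cref{ass:zvonkin}, $\sigma_t$ is bounded and $\alpha$-H\"older. Choosing $\beta \geq \alpha$ (always possible since $\alpha \leq 1$), the product of two bounded H\"older functions with exponents $\geq \alpha$ is $\alpha$-H\"older. Finally, since $v_t$ is Lipschitz by \Cref{lem:G-properties}\ref{it:G1}, the composition $\tilde \sigma_t = F_t \circ v_t$ is again $\alpha$-H\"older in space.

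For the temporal H\"older regularity at fixed $x$, I decompose
\begin{equation*}
\tilde \sigma_t(x) - \tilde \sigma_s(x) = [\partial_x u_t(v_t(x)) - \partial_x u_s(v_s(x))]\, \sigma_t(v_t(x)) + \partial_x u_s(v_s(x))\, [\sigma_t(v_t(x)) - \sigma_s(v_s(x))]
\end{equation*}
and further split each bracket by inserting intermediate arguments (e.g.\ $\partial_x u_s(v_t(x))$ and $\sigma_s(v_t(x))$). The ``pure time'' terms are estimated via the $\gamma$-H\"older continuity in $t$ of $\partial_x u$ and $v$ from \Cref{lem:G-properties}\ref{it:G3}, valid for any $\gamma \in (0, 1)$; the ``pure space'' terms are bounded by the spatial H\"older continuity of $\partial_x u$ and $\sigma$ applied to the displacement $|v_t(x) - v_s(x)| \lesssim |t - s|^\gamma$. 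Combining produces a bound of the form $C|t - s|^{\alpha \gamma}$, and letting $\gamma \uparrow 1$ yields $\beta$-H\"older continuity for any $\beta \in (0, \alpha)$.

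The main subtlety I anticipate concerns the temporal regularity of $\sigma$: \Cref{ass:zvonkin} guarantees only continuity of $t \mapsto \sigma_t$, not H\"older continuity, so the term $|\sigma_t(v_s(x)) - \sigma_s(v_s(x))|$ arising after the split cannot be handled by any explicit H\"older estimate on $\sigma$. Resolving this requires organising the decomposition so that the pure-time variation of $\sigma$ at a fixed space point never appears in isolation, but only as part of compositions in which the time-dependence is routed through $v_t$ (whose H\"older regularity in $t$ is supplied by \Cref{lem:G-properties}\ref{it:G3}). This is why the exponent in the final claim is $\beta < \alpha$, rather than $\beta = \alpha$: the restriction reflects both the fact that $v$ and $\partial_x u$ are $\gamma$-H\"older in time only for $\gamma$ strictly less than $1$, and the loss incurred by the product $\alpha \gamma$.
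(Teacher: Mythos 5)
The paper gives no proof at all for this corollary---it simply declares it ``an immediate consequence of Lemma 5.3''---so you have in effect supplied the missing argument. The boundedness part is fine, and the spatial H\"older part is essentially correct, subject to one edge case: when $\alpha=1$ the claim is that $\tilde\sigma_t$ is \emph{Lipschitz} in $x$, but \Cref{lem:G-properties}\ref{it:G2} only gives $\partial_x u_t$ in the H\"older class $C^\beta$ for $\beta<1$, so your ``choose $\beta\geq\alpha$'' step fails there; you only obtain $\beta$-H\"older continuity for every $\beta<1$. This is a defect that the corollary's statement itself carries, not something your argument introduces, and it is harmless downstream since what Theorem 5.8 actually uses is that the transformed coefficients satisfy \Cref{ass:regular} (continuity and linear growth, with pathwise uniqueness via Yamada--Watanabe), for which any H\"older exponent $\geq 1/2$ suffices.

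The genuine gap is in the temporal part, and you have in fact diagnosed it correctly yourself before trying to paper over it: after inserting the intermediate argument, the term $\sigma_t(v_s(x)) - \sigma_s(v_s(x))$ is the pure-in-$t$ variation of $\sigma$ at a frozen spatial point, and \Cref{ass:zvonkin} gives only \emph{continuity} of $t\mapsto\sigma_t(y)$, not any H\"older modulus. Your closing suggestion---that one can ``organise the decomposition so that the pure-time variation of $\sigma$ at a fixed space point never appears in isolation, but only as part of compositions in which the time-dependence is routed through $v_t$''---does not work: $\sigma_t$ enters $\tilde\sigma_t = (\partial_x u_t\cdot\sigma_t)\circ v_t$ with its own time index, independently of the time index on $v_t$, and no algebraic rearrangement eliminates the $t$-dependence of $\sigma$ itself. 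If $\sigma$ is merely $C^0$ in $t$, then $\tilde\sigma$ is in general merely $C^0$ in $t$, and the exponent loss $\beta<\alpha$ you attribute to the product $\alpha\gamma$ is not the actual obstruction. This means the temporal H\"older claim in the corollary does not follow from \Cref{lem:G-properties} and \Cref{ass:zvonkin} as stated, and your proof should either flag that the hypothesis on $\sigma$ would need strengthening to H\"older-in-time, or drop the temporal H\"older clause, which is never used in the paper (continuity in $t$, which your boundedness-and-composition argument does yield, is all that \Cref{ass:regular} requires).
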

	
	Let $c \colon [0, T] \times \R \times \R \to \R$ satisfy \Cref{ass:cost}. Suppose that $(b, \sigma)$ and $(\bar b, \bar \sigma)$ satisfy \Cref{ass:zvonkin}, and write $\mu, \nu$ for the laws of the solutions $X, \bar X$ of $\eqref{eq:sde-correlated}$. For $(\bar b, \bar \sigma)$, define functions $\bar u, \bar v$ analogously to $u, v$ and define $\bar Z_t = \bar u_t(\bar X_t)$ for all $t \in [0, T]$.

	In order to find the value $V_c(\mu, \nu)$ of \eqref{eq:bcot-cont}, we consider the following auxiliary problem. Write $\tilde \mu, \tilde \nu$ for the laws of $Z, \bar Z$, respectively, and define a cost function $\tilde c\colon [0, T] \times \R \times \R \to \R$ by 
	\begin{equation}\label{eq:transf-cost}
		\tilde c_t(z, \bar z) \coloneqq c_t(v_t(z), \bar v_t(\bar z)).
	\end{equation}
	Consider the problem
	\begin{equation}\label{eq:transformed-valued-z}
		V_{\tilde c}(\tilde \mu, \tilde \nu) \coloneqq \inf_{\pi \in \cplba(\tilde \mu, \tilde \nu)}\E^\pi\!\left[\int_0^T\tilde c_t(\omega_t, \bar \omega_t) \D t\right]\!.
	\end{equation}
	We now verify that $\tilde c$ inherits the continuity, polynomial growth and quasi-monotonicity properties from $c$.
		
	\begin{lemma}\label{lem:transformed-cost}
		Suppose that $(b, \sigma)$ and $(\bar b, \bar \sigma)$ satisfy \Cref{ass:zvonkin} and that $c \colon [0, T] \times \R \times \R \to \R$ satisfies \Cref{ass:cost} for some $p \geq 1$.
		Then the function $\tilde c_t \colon \R \times \R$ defined by \eqref{eq:transf-cost} also satisfies \Cref{ass:cost} for the same value of $p$.
	\end{lemma}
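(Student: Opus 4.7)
The plan is to verify each of the three conditions of \Cref{ass:cost} for $\tilde c$ in turn, exploiting the regularity of the Zvonkin-type transformations $v_t, \bar v_t$ recorded in \Cref{lem:G-properties} \ref{it:G1}--\ref{it:G3}.

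First, I would address joint measurability in $(t, z, \bar z)$. By \Cref{lem:G-properties}, the maps $(t, z) \mapsto v_t(z)$ and $(t, \bar z) \mapsto \bar v_t(\bar z)$ are (jointly) continuous, because they are Lipschitz in space uniformly in $t$ and $\beta$-Hölder in $t$ locally uniformly in space. Combining this with the measurability of $c$ yields measurability of $\tilde c$. For each fixed $t$, the continuity of $\tilde c_t$ is immediate from the continuity of $c_t$ and the continuity of the spatial maps $v_t, \bar v_t$.

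Next, I would prove the polynomial growth bound \eqref{eq:poly-growth} for $\tilde c_t$. Since $v_t$ is Lipschitz in $x$ uniformly in $t$ (with some constant $L_v$) and $t \mapsto v_t(0)$ is continuous on $[0, T]$, there exists $M_v < \infty$ with $|v_t(z)| \leq L_v|z| + M_v$ for all $(t, z) \in [0, T] \times \R$, and analogously $|\bar v_t(\bar z)| \leq L_{\bar v}|\bar z| + M_{\bar v}$. Inserting into $|c_t(v_t(z), \bar v_t(\bar z))| \leq K[1 + |v_t(z)|^p + |\bar v_t(\bar z)|^p]$ and expanding via the elementary inequality $(a + b)^p \leq 2^{p - 1}(a^p + b^p)$ produces a new constant $\tilde K$ with $|\tilde c_t(z, \bar z)| \leq \tilde K[1 + |z|^p + |\bar z|^p]$, which is \eqref{eq:poly-growth} for $\tilde c_t$ with the same $p$.

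Finally, I would verify the quasi-monotonicity \eqref{eq:superadditivity}. Fix $z \leq z'$ and $\bar z \leq \bar z'$. By \Cref{lem:G-properties} \ref{it:G1}, $v_t$ and $\bar v_t$ are strictly increasing, so setting $x = v_t(z)$, $x' = v_t(z')$, $y = \bar v_t(\bar z)$, $y' = \bar v_t(\bar z')$ yields $x \leq x'$ and $y \leq y'$. Applying \eqref{eq:superadditivity} for $c_t$ at these points gives
\begin{equation}
\tilde c_t(z, \bar z) + \tilde c_t(z', \bar z') - \tilde c_t(z, \bar z') - \tilde c_t(z', \bar z) = c_t(x, y) + c_t(x', y') - c_t(x, y') - c_t(x', y) \geq 0,
\end{equation}
which is precisely the desired inequality for $\tilde c_t$. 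None of these steps poses any real obstacle; the essential point is that composition with strictly increasing, Lipschitz functions preserves both polynomial growth and quasi-monotonicity, a robust feature of the monotone Zvonkin transformation.
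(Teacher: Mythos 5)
Your proof is correct and follows essentially the same approach as the paper's: compose with the monotone Lipschitz maps $v_t, \bar v_t$, use Lipschitz continuity for the growth bound (the paper anchors at $u_t(x_0)$ via $v_t(u_t(x_0)) = x_0$, while you anchor at $0$ via uniform boundedness of $t \mapsto v_t(0)$; both are justified by \Cref{lem:G-properties}), and use monotonicity of $v_t, \bar v_t$ to transfer quasi-monotonicity. Your additional remark on joint measurability of $\tilde c$ in $(t,z,\bar z)$ is a minor point the paper leaves implicit but is a sensible thing to check, since measurability of the cost is part of \Cref{ass:cost}.
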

	
	\begin{proof}
		By \Cref{lem:G-properties}, $v, \tilde v \colon [0, T] \times \R \to \R$ are continuous. Thus, since $c$ is continuous on $[0, T] \times M$ for any compact $M \subseteq \R \times \R$, the same property holds for $\tilde c$.
		By assumption, there exist $p \geq 1$ and $K \geq 0$ such that, for all $t \in [0, T]$, $c_t$ satisfies the growth bound \eqref{eq:poly-growth}. Now fix $t \in [0, T]$. Then
		\begin{equation}
			|\tilde c_t(z, \bar z)| = |c_t(v_t(z), \bar v_t(\bar z))| \leq K(1 + |v_t(z)|^p + |\bar v_t(\bar z)|^p).
		\end{equation}
		Writing $L$ for the Lipschitz constant of $v_t$, we have $|v_t(z)| \leq L|z| + L|u_t(x_0)| + |x_0|$, and similarly for $\bar v_t(\bar z)$. Thus the growth bound \eqref{eq:poly-growth} holds for $\tilde c_t$ with a constant $\tilde K$ independent of $t$, and with power $p$. It remains to prove that $\tilde c_t$ satisfies \eqref{eq:superadditivity}. Since $v_t, \bar v_t$ are increasing by \Cref{lem:G-properties}, this follows immediately from the fact that $c_t$ satisfies \eqref{eq:superadditivity}.
	\end{proof}
	
	Now we show that \eqref{eq:transformed-valued-z} is equivalent to \eqref{eq:bcot-cont}.
	
	\begin{lemma}\label{lem:z-transformed-value}
		Suppose that $(b, \sigma)$ and $(\bar b, \bar \sigma)$ satisfy \Cref{ass:zvonkin}. Let $c \colon [0, T] \times \R \times \R \to \R$ be a measurable function. Then $V_c(\mu, \nu) = V_{\tilde c}(\tilde\mu, \tilde\nu)$.
	\end{lemma}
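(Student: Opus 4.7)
The plan is to construct a bicausality-preserving bijection between the coupling sets $\cplba(\mu,\nu)$ and $\cplba(\tilde\mu,\tilde\nu)$ using the path-level Zvonkin transformation, and then to verify that the two cost functionals integrate to the same value under this correspondence. First I will define the map $\Phi \colon \Omega \times \Omega \to \Omega \times \Omega$ by $\Phi(\omega, \bar\omega)_t \coloneqq (u_t(\omega_t), \bar u_t(\bar\omega_t))$ for $t \in [0,T]$. By \Cref{lem:G-properties}, each of $u_t, v_t, \bar u_t, \bar v_t$ is a strictly increasing Lipschitz homeomorphism of $\R$ with the required joint continuity in $(t,x)$, so $\Phi$ is a homeomorphism of $\Omega \times \Omega$ with continuous inverse $\Phi^{-1}(z,\bar z)_t = (v_t(z_t), \bar v_t(\bar z_t))$. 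Since $Z_t = u_t(X_t)$ and $\bar Z_t = \bar u_t(\bar X_t)$ by construction, the single-marginal pushforwards satisfy $u_\# \mu = \tilde\mu$ and $\bar u_\# \nu = \tilde\nu$, and therefore $\pi \mapsto \Phi_\# \pi$ yields a bijection between $\cpl(\mu,\nu)$ and $\cpl(\tilde\mu,\tilde\nu)$.

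Second, I will check that $\Phi_\#$ preserves bicausality in the sense of \Cref{def:bicausal}. This is the heart of the argument. Because $u_t \colon \R \to \R$ is a deterministic Borel bijection for each fixed $t \in [0,T]$, the natural filtrations of the canonical process satisfy $\F^\omega_t = \F^{u(\omega)}_t$ pathwise, and the same holds for $\bar\omega$ and $\bar u$. Consequently the conditional independence characterising causality is invariant under $\Phi_\#$: $\pi$ is a causal coupling of $\mu$ and $\nu$ if and only if $\Phi_\# \pi$ is a causal coupling of $\tilde\mu$ and $\tilde\nu$, and the symmetric statement holds by exchanging the roles of the two marginals. Hence $\pi \in \cplba(\mu,\nu)$ if and only if $\Phi_\# \pi \in \cplba(\tilde\mu,\tilde\nu)$.

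Third, the two cost functionals coincide along this correspondence: by the definition of $\tilde c$ in \eqref{eq:transf-cost} and the identity $v_t \circ u_t = \id$, for any $(\omega,\bar\omega) \in \Omega \times \Omega$ and $t \in [0,T]$,
\begin{equation}
\tilde c_t(u_t(\omega_t), \bar u_t(\bar\omega_t)) = c_t(v_t(u_t(\omega_t)), \bar v_t(\bar u_t(\bar\omega_t))) = c_t(\omega_t, \bar\omega_t).
\end{equation}
Applying the change-of-variables formula for the pushforward, together with Fubini's theorem on $(\Omega \times \Omega) \times [0,T]$ equipped with $\pi \otimes \mathrm{Leb}$, gives
\begin{equation}
\E^{\Phi_\# \pi}\!\left[\int_0^T \tilde c_t(\omega_t, \bar\omega_t) \D t\right]\! = \E^\pi\!\left[\int_0^T c_t(\omega_t, \bar\omega_t) \D t\right]\!
\end{equation}
whenever either side is well-defined in $\R \cup \{+\infty\}$. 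Combining this identity with the bijection of coupling sets from the second step and taking infima yields $V_c(\mu,\nu) = V_{\tilde c}(\tilde\mu,\tilde\nu)$.

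The only delicate step is the filtration identity $\F^\omega_t = \F^{u(\omega)}_t$ and its consequence for bicausality. Since $u_t$ is a deterministic Borel bijection for each $t$, at the pathwise level this identity is immediate; the care needed is to ensure that the conditional independence of \Cref{def:bicausal}—which is formulated up to null sets for $\pi$—transfers correctly along $\Phi_\#$. This is standard once one appeals to the equivalent characterisations of bicausality in terms of disintegrations, as discussed following \Cref{def:bicausal}, and I expect the verification to be routine rather than substantive.
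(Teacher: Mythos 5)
Your proof is correct, but it takes a genuinely different route from the paper. The paper's argument never touches the filtrations directly: it invokes \cite[Proposition~2.2]{BaKaRo22}, which characterises every bicausal coupling of laws of strong SDE solutions as the joint law of the two solutions driven by some $\rho$-correlated Brownian motion. Given $\pi\in\cplba(\mu,\nu)$, the paper realises $\pi=\Law(X,\bar X)$ for such a correlated driving noise, observes that $Z=u(X)$ and $\bar Z=\bar u(\bar X)$ then solve the transformed SDEs driven by the same correlated noise, and applies the proposition a second time to conclude $\Law(Z,\bar Z)\in\cplba(\tilde\mu,\tilde\nu)$; the reverse direction uses the inverse transform $v$. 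Your argument bypasses this SDE-specific machinery entirely: you observe that the pathwise Zvonkin map $\Phi(\omega,\bar\omega)_t=(u_t(\omega_t),\bar u_t(\bar\omega_t))$ is adapted with adapted inverse, so $\sigma(\omega_s:s\le t)=\sigma(u(\omega)_s:s\le t)$ holds exactly, and the conditional independence defining bicausality in \Cref{def:bicausal} transfers verbatim under $\Phi_\#$ (the completions match because $\Phi$ is a Borel isomorphism carrying $\mu\otimes\nu$-null sets to $\tilde\mu\otimes\tilde\nu$-null sets). This is more elementary and more general: it only needs that $u_t$, $\bar u_t$ are Borel bijections of $\R$ for each $t$ with enough regularity to make $\Phi$ a measurable self-map of $\Omega\times\Omega$, whereas the paper's route requires the full strong-wellposedness apparatus behind \cite[Proposition~2.2]{BaKaRo22}. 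What the paper's route buys in exchange is uniformity with the rest of the manuscript, which already leans heavily on the correlated-Brownian-motion description of $\cplba$; in a standalone presentation your argument would be the cleaner choice. The one point you flag as ``delicate'' --- transfer of the conditional independence through the completion --- is indeed routine for exactly the reason you give, and the $L^p$-continuity of $\Phi$ (via the uniform-in-$t$ Lipschitz constant of $u_t$ and the time regularity from \Cref{lem:G-properties}) could even be weakened to mere Borel measurability, since the pushforward and the filtration identity are all you actually use.
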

	
	\begin{proof}
		Let $\pi \in \cplba(\mu, \nu)$. By \citep[Proposition 2.2]{BaKaRo22}, $\pi = \Law(X, \bar X)$, where $(X, \bar X)$ is the unique strong solution of the system of SDEs \eqref{eq:sde-correlated} driven by some correlated Brownian motion $(W, \bar W)$. Now define $(Z, \bar Z)$ by $Z_t = u_t(X_t)$, $\bar Z_t = \bar u_t(\bar X_t)$, for all $t \in [0, T]$. Then $Z$ is the unique strong solution of the SDE \eqref{eq:transf-sde-zvonkin} driven by $W$, and $\bar Z$ of the analogous transformed SDE for $(\bar b, \bar \sigma)$ driven by $\bar W$. Hence, by \citep[Proposition 2.2]{BaKaRo22} again, we can define a bicausal coupling by $\tilde \pi \coloneqq \Law(Z, \bar Z) \in \cplba(\tilde \mu, \tilde \nu)$. By the definition of $\tilde c$, we can write
		\begin{equation}\label{eq:cost-equality}
			 \E^{\tilde\pi}\!\left[\int_0^T \tilde c_t(\omega_t, \bar \omega_t) \D t\right]\! = \E\!\left[\int_0^T c_t(v_t(Z_t), \bar v_t(\bar Z_t)) \D t\right]\! = \E^\pi\!\left[\int_0^T c_t(\omega_t, \bar \omega_t) \D t\right]\!.
		\end{equation}
		Starting from an arbitrary $\tilde \pi \in \cplba(\tilde \mu, \tilde \nu)$, we can construct $\pi \in \cplba(\mu, \nu)$ in a similar manner, via the inverse transform $v$. Thus, taking the infimum on both sides of \eqref{eq:cost-equality},we obtain $V_c(\mu, \nu) = V_{\tilde c}(\tilde \mu, \tilde \nu)$.
	\end{proof}
	
	Combining the preceding results, we obtain optimality of the synchronous coupling for \eqref{eq:bcot-cont}.
	
	\begin{theorem}\label{thm:zvonkin-optimality}
		Suppose that $(b, \sigma)$, $(\bar b, \bar \sigma)$ satisfy \Cref{ass:zvonkin}, and write $\mu, \nu$ for the laws of $X, \bar X$, respectively. 
		Let $c \colon [0, T] \times \R \times \R \to \R$ satisfy \Cref{ass:cost}. Then the synchronous coupling attains the value $V_c(\mu, \nu)$ of the bicausal optimal transport problem \eqref{eq:bcot-cont}.
	\end{theorem}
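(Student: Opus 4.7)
The strategy is to reduce the problem to one already solved, namely the regular setting of \Cref{cor:regular}, via the Zvonkin transformation. The transformed SDE \eqref{eq:transf-sde-zvonkin} has zero drift and a diffusion coefficient $\tilde\sigma$ which, by \Cref{cor:lipschitz-coeffs}, is bounded away from zero and infinity, $\alpha$-H\"older in space, and $\beta$-H\"older in time for $\beta \in (0,\alpha)$. Write $\bar u, \bar v, \tilde{\bar\sigma}$ for the analogous objects associated to $(\bar b, \bar\sigma)$, and let $\tilde\mu, \tilde\nu$ denote the laws of $Z = u(X)$ and $\bar Z = \bar u(\bar X)$, respectively. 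By \Cref{lem:transformed-cost}, the transformed cost $\tilde c$ defined in \eqref{eq:transf-cost} satisfies \Cref{ass:cost} with the same exponent $p$ as $c$, and by \Cref{lem:z-transformed-value} we have the equality of values $V_c(\mu,\nu) = V_{\tilde c}(\tilde\mu, \tilde\nu)$.

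Next I would verify that the coefficients $(0, \tilde\sigma)$ and $(0, \tilde{\bar\sigma})$ fulfil the hypotheses of \Cref{ass:regular}. Continuity in $(t,x)$ follows from the H\"older regularity noted above; linear growth is immediate from boundedness; and pathwise uniqueness follows from the Yamada--Watanabe criterion, since the drift is identically zero (hence trivially Lipschitz) and the diffusion is $\alpha$-H\"older in space with $\alpha \geq 1/2$, so that $\rho(u) = C u^{\alpha}$ satisfies $\int_{0+}\rho^{-2}(u)\,\D u = \infty$. Consequently \Cref{cor:regular} applies to the bicausal optimal transport problem between $\tilde\mu$ and $\tilde\nu$ with cost $\tilde c$: the synchronous coupling $\pi^\sync_{\tilde\mu,\tilde\nu}$ attains $V_{\tilde c}(\tilde\mu,\tilde\nu)$.

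To finish I would argue that this optimality transfers back through the transformation. Recall the correspondence used in the proof of \Cref{lem:z-transformed-value}: if $(X, \bar X)$ is a strong solution of \eqref{eq:sde-correlated} driven by a correlated Brownian motion $(W, \bar W)$, then $(Z, \bar Z) = (u(X), \bar u(\bar X))$ is a strong solution of the system of transformed SDEs driven by the \emph{same} $(W, \bar W)$, and conversely via $v, \bar v$. Taking $W = \bar W$ shows that $\pi^\sync_{\mu,\nu}$ corresponds under $(u, \bar u)$ precisely to $\pi^\sync_{\tilde\mu,\tilde\nu}$, and the cost equality \eqref{eq:cost-equality} then gives
\begin{equation}
\E^{\pi^\sync_{\mu,\nu}}\!\left[\int_0^T c_t(\omega_t, \bar\omega_t) \D t\right]\! = \E^{\pi^\sync_{\tilde\mu,\tilde\nu}}\!\left[\int_0^T \tilde c_t(\omega_t, \bar\omega_t) \D t\right]\! = V_{\tilde c}(\tilde\mu, \tilde\nu) = V_c(\mu,\nu),
\end{equation}
which is the claim.

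The principal technical point, and the step that requires the most care, is the verification that pathwise uniqueness carries over to the transformed SDE under \Cref{ass:zvonkin}; this is what forces the H\"older exponent $\alpha \geq 1/2$ in the diffusion coefficient. Everything else is essentially bookkeeping: the bijective correspondence between bicausal couplings of the original and transformed processes (\Cref{lem:z-transformed-value}) preserves synchronicity because the Zvonkin map depends only on the coefficients, not on the driving noise, so the transformed processes inherit the common Brownian motion used to define the synchronous coupling.
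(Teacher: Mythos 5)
Your proof follows essentially the same route as the paper's: reduce via the Zvonkin transformation and Lemma \ref{lem:z-transformed-value} to the transformed problem, verify that $(0,\tilde\sigma)$ satisfies Assumption \ref{ass:regular} (the paper likewise invokes Corollary \ref{cor:lipschitz-coeffs} and Yamada--Watanabe for pathwise uniqueness), check the cost via Lemma \ref{lem:transformed-cost}, and apply Corollary \ref{cor:regular}. The only difference is that you spell out the final step — that the bijection of bicausal couplings from Lemma \ref{lem:z-transformed-value} carries the synchronous coupling to the synchronous coupling while preserving cost — which the paper leaves implicit in the phrase ``it suffices to prove''; making this explicit is a welcome clarification rather than a different argument.
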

	 	
	\begin{proof}
		Thanks to \Cref{lem:z-transformed-value}, it suffices to prove that the synchronous coupling $\pi^\sync_{\tilde \mu, \tilde \nu}$ attains the value $V_{\tilde c}(\tilde \mu, \tilde \nu)$, where $\tilde \mu, \tilde \nu$ are the laws of solutions of transformed SDEs of the form \eqref{eq:transf-sde-zvonkin}. By \Cref{cor:lipschitz-coeffs}, the coefficients of these SDEs are bounded and continuous, and pathwise uniqueness holds by \citet[Theorem 2]{YW2}. Thus the coefficients of each of the transformed SDEs satisfy \Cref{ass:regular}. By \Cref{lem:transformed-cost}, $\tilde c$ satisfies the conditions of \Cref{cor:regular}, and applying this result completes the proof.
	\end{proof}
		
	\subsection{Lack of monotonicity and counterexample}\label{sec:counterexamples}
		
		In the proof of \Cref{thm:zvonkin-optimality}, we consider transformed SDEs with H\"older continuous diffusion coefficients that may not be Lipschitz. Recall the monotone Euler--Maruyama scheme from \Cref{sec:num-sdes} that was introduced in \cite[Definition 3.13]{BaKaRo22}. We show that this scheme is not stochastically monotone for SDEs whose diffusion coefficient is not Lipschitz. Thus we could not apply \Cref{prop:general-optimality} with the monotone Euler--Maruyama scheme to prove optimality in this case.
		
		\begin{proposition}\label{prop:non-monotone}
			Let $\sigma\colon \R \to (0, \infty)$ be $\alpha$-H\"older continuous for some $\alpha \in [1/2, 1)$, bounded from above, and bounded away from zero. Consider the SDE $\D Z_t = \sigma(Z_t) \D W_t$, with $Z_0 = z_0 \in \R$. If $\sigma$ is not Lipschitz continuous, then the monotone Euler--Maruyama scheme for $Z$ is not stochastically monotone for any step size.
		\end{proposition}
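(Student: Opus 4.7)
First I would reduce stochastic monotonicity of the scheme to stochastic monotonicity of its one-step transition kernel. Since $b \equiv 0$ and the truncated Brownian increments $\Delta W^h_{k+1}$ are independent of $Z^h_{kh}$ by construction, the scheme is a time-homogeneous Markov chain with transition $x \mapsto \Law(x + \sigma(x) \Delta W^h)$, where $\Delta W^h$ denotes a generic truncated Brownian increment. Its law has a continuous, strictly increasing cumulative distribution function $G$ supported on $[-A_h, A_h]$. Stochastic monotonicity in the sense of \Cref{def:stoch-dom} is therefore equivalent to this transition kernel being monotone (increasing or decreasing) in first-order stochastic dominance as a function of the starting point.

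Next, I would fix a step size $h \in (0, 1)$ and characterise when the FSD comparison between $\Law(x + \sigma(x) \Delta W^h)$ and $\Law(y + \sigma(y) \Delta W^h)$ for $x < y$ can fail. Since $\sigma > 0$, the comparison is equivalent to comparing the rescaled thresholds $(c-x)/\sigma(x)$ and $(c-y)/\sigma(y)$ for $c$ in the supports. When $\sigma(x) \neq \sigma(y)$, these coincide exactly at
\[
c^\ast \coloneqq \frac{\sigma(y)\,x - \sigma(x)\,y}{\sigma(y) - \sigma(x)}.
\]
A direct computation gives $c^\ast - x = -\sigma(x)(y-x)/(\sigma(y) - \sigma(x))$ and $c^\ast - y = -\sigma(y)(y-x)/(\sigma(y) - \sigma(x))$, from which I can read off that $c^\ast$ lies strictly inside both intervals $(x - \sigma(x) A_h,\, x + \sigma(x) A_h)$ and $(y - \sigma(y) A_h,\, y + \sigma(y) A_h)$ if and only if
\[
\frac{|\sigma(y) - \sigma(x)|}{|y - x|} > \frac{1}{A_h}.
\]
Whenever this holds, evaluating the tail probabilities $\P[x + \sigma(x) \Delta W^h \geq c]$ and $\P[y + \sigma(y) \Delta W^h \geq c]$ at values of $c$ slightly above and slightly below $c^\ast$ (using strict monotonicity of $G$ on its support) shows that their ordering reverses as $c$ passes $c^\ast$. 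Consequently the kernel is neither increasing nor decreasing in FSD for this particular pair $(x, y)$.

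Finally, the hypothesis that $\sigma$ is not Lipschitz says precisely that the difference quotients $(\sigma(y) - \sigma(x))/(y - x)$ are unbounded in absolute value over pairs $x \neq y$. For any fixed $h \in (0, 1)$, the threshold $1/A_h$ is finite, so I can select $x < y$ for which the absolute difference quotient exceeds $1/A_h$; the previous step then supplies the required failure of stochastic monotonicity, and $h$ was arbitrary. The main subtlety is the bookkeeping around the sign of $\sigma(y) - \sigma(x)$: the cases $\sigma(x) < \sigma(y)$ and $\sigma(x) > \sigma(y)$ are handled by the same computation with the inequalities reversed, and in both cases the single pair $(x, y)$ simultaneously witnesses failure of the increasing and decreasing comparisons.
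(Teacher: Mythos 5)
Your proposal is correct and arrives at the same quantitative threshold as the paper, namely that the failure of monotonicity is witnessed precisely when $|\sigma(y) - \sigma(x)|/|y-x| > 1/A_h$. The reduction to the one-step kernel and the choice of the non-Lipschitz pair $(x, y)$ are identical to the paper's; the difference is \emph{where} you evaluate the two CDFs. The paper works at the endpoints of the supports of $x + \sigma(x)\Delta W^h$ and $y + \sigma(y)\Delta W^h$: under the same inequality, one support strictly overhangs the other at both ends, so the atoms of the truncated increment at $\pm A_h$ immediately give one evaluation point where $\P[Y\le\cdot]=0<\P[\bar Y\le\cdot]$ and another where $\P[Y\le\cdot]=1>\P[\bar Y\le\cdot]$. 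You instead work near the interior crossing point $c^\ast$ of the two CDFs, which requires that the CDF $G$ of the truncated increment be \emph{strictly} increasing on $(-A_h, A_h)$ so that a transversal crossing forces the ordering to reverse. That fact is true (the killed Brownian density is strictly positive on the open interval), but you assert it without justification, and your preliminary description of $G$ as ``continuous'' is actually false: $\Delta W^h$ has atoms at $\pm A_h$, so $G$ jumps there. Neither point is fatal — the atoms don't affect your interior argument, and strict interior positivity can be supplied — but the paper's endpoint argument is a bit more robust in that it uses only the atoms, which are there by construction, and avoids appealing to any density property. If you keep your version, add a sentence justifying strict monotonicity of $G$ on $(-A_h, A_h)$ and drop the word ``continuous.''
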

		
		\begin{remark}
			In the setting of \Cref{prop:non-monotone}, the monotone Euler--Maruyama scheme coincides with the transformed monotone semi-implicit Euler--Maruyama scheme, since there is no drift term.
		\end{remark}
		
		\begin{proof}[Proof of \Cref{prop:non-monotone}]
			The conditions of the proposition are sufficient to guarantee the existence of a unique strong solution $Z$ of the SDE by, e.g.~\citep[Theorem 4]{Zv74}. For any $N \in \N$ and $h = {\frac TN}$, let $Z^h$ denote the monotone Euler--Maruyama scheme for $Z$ with step size $h$, and let $A_h$ be the truncation level given in \Cref{def:trunc-bm}. Note that, since $\sigma$ is not Lipschitz continuous, there exist $z, \bar z \in \R$ with $z < \bar z$ such that
			\begin{equation}\label{eq:non-lip}
				|\sigma(\bar z) - \sigma(z)| > A_h^{-1}(\bar z - z) > 0.
			\end{equation}
			Now consider the random variables $Y \coloneqq z + \sigma(z)\xi^h$, $\bar Y \coloneqq \bar z + \sigma(\bar z)\xi^h$, where $\xi^h$ is a random variable with the same law as an increment of the truncated Brownian motion $W^h$. For all $a \in \R$, we have
			\begin{equation}
				\P[Y \leq a] = \P\!\left[\xi^h \leq \frac{a - z}{\sigma(z)}\right]\! = \begin{cases}
					0, & a \leq z - A_h \sigma(z),\\
					2\Phi\!\left(h^{-\frac12}\frac{a - z}{\sigma(z)}\right)\!, & a \in (z - A_h\sigma(z), z + A_h\sigma(z)),\\
					1, & a \geq z + A_h \sigma(z),
				\end{cases}
			\end{equation}
			where $\Phi\colon \R \to [0, 1]$ is the distribution function of a standard Gaussian, which is strictly increasing. The same holds for $\bar Y$ with $z$ replaced by $\bar z$. We aim to show that there exist $a_\ast, a^\ast \in \R$ such that $\P[\bar Y \leq a_\ast] < \P[Y \leq a_\ast]$ and $\P[Y \leq a^\ast] < \P[\bar Y \leq a^\ast]$. This would imply that $Z^h$ is neither stochastically decreasing nor increasing. Suppose first that $\sigma(\bar z) \ge \sigma(z)$. Then rearranging \eqref{eq:non-lip} gives
			\begin{equation}
				\bar z - A_h\sigma(\bar z) < z - A_h \sigma(z).
			\end{equation}
			So there exists $a^\ast \in (\bar z - A_h \sigma(\bar z), z - A_h \sigma(z))$, and we see that
			\begin{equation}
				\P[Y \leq a^\ast] = 0, \; \text{while} \quad \P[\bar Y \leq a^\ast] > 0.
			\end{equation}
			Set $a_\ast = z + A_h \sigma( z)$ and note that $a_\ast <  \bar z + A_h \sigma(\bar z)$. Therefore $\P[Y \leq a_\ast]= 1$ and $\P[\bar Y \leq a_\ast] < 1$.
			
			On the other hand, if $\sigma(\bar z) < \sigma(z)$, then $\bar z + A_h\sigma(\bar z) < z + A_h\sigma( z)$, and so there exists $a^\ast$ such that $\P[\bar Y \leq a^\ast] = 1$ and $\P[Y \leq a^\ast] < 1$. Now setting $a_\ast = \bar z - A_h \sigma(\bar z) > z - A_h\sigma(z)$ gives $\P[\bar Y \leq a_\ast] = 0$ and $\P[Y \le a_\ast] > 0$. Hence $Z^h$ is not stochastically monotone.
		\end{proof}
		
		A natural generalisation of stochastic monotonicity is monotonicity with respect to second order stochastic dominance. We now provide an example to show that, for two marginals that are increasing in second order stochastic dominance, the Knothe--Rosenblatt rearrangement may fail to be optimal for the bicausal transport problem, if one of the marginals is not stochastically increasing.
		
		\begin{example}[second order stochastic dominance is insufficient for optimality]\label{ex:second-order-mono}
			Consider the discrete-time Markov processes $X = (X_1, X_2)$, $\bar X = (\bar X_1, \bar X_2)$, with first marginals given by $\P[X_1 = -1/2] = \P[X_1 = 1/2] = \P[\bar X_1 = -1/2] = \P[\bar X_1 = 1/2] = 1/2$, and conditional second marginals given by
			$\P[X_2 = 2 \mid X_1 = -1/2] = \P[X_2 = -2 \mid X_1 = -1/2] = 1/2$, $\P[X_2 = 0 \mid X_1 = 1/2] = 1$, and
			$\P[\bar X_2 = -2 \mid \bar X_1 = - 1/2] = \P[\bar X_2 = 0 \mid \bar X_1 = -1/2] = \P[\bar X_2 = -2 \mid \bar X_1 = 1/2] = \P[\bar X_2 = 2 \mid \bar X_1 = 1/2] = 1/2$.
			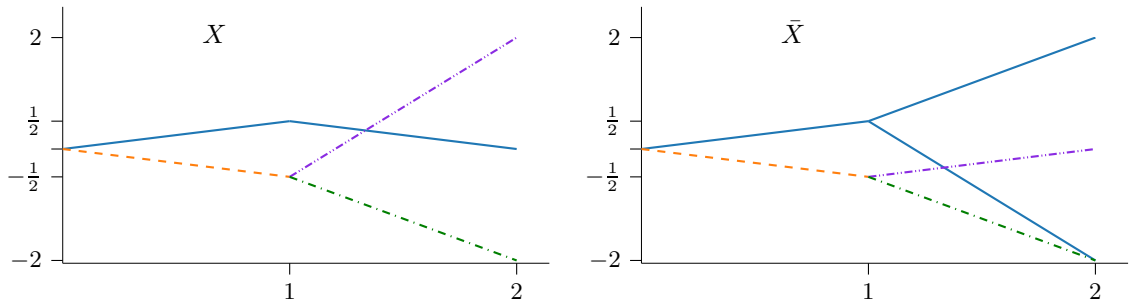
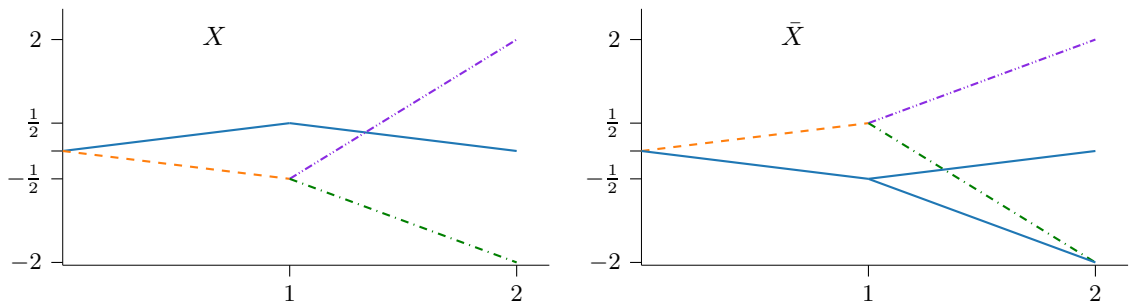
\begin{figure}[h]
				\begin{subfigure}{\textwidth}
					\centering
					\begin{tikzpicture}[baseline={(0,0)}]

\definecolor{darkgray176}{RGB}{176,176,176}
\definecolor{darkorange25512714}{RGB}{255,127,14}
\definecolor{steelblue31119180}{RGB}{31,119,180}
\definecolor{darkgreen}{RGB}{0, 127, 0}
\definecolor{blueviolet}{RGB}{138,43,226}

\begin{axis}[
tick align=outside,
tick pos=left,
x grid style={darkgray176},
xmin=0.0, xmax=2.01,
xtick style={color=black},
xtick = {1.0, 2.0},
xticklabels = {\small{$1$}, \small{$2$}},
y grid style={darkgray176},
ymin=-2.05, ymax=2.01,
ytick style={color=black},
ytick = {-2.0, -0.5, 0.0, 0.5, 2.0},
yticklabels = {\small{$-2$}, \small{$-\frac12$}, , \small{$\frac12$}, \small{$2$}}
]
\draw[-, steelblue31119180, thick](0.0, 0.0) -- (1.0, 0.5);
\draw[dashed, darkorange25512714, thick](0.0, 0.0) -- (1.0, -0.5);
\draw[-, steelblue31119180, thick](1.0, 0.5) -- (2.0, 0.0);
\draw[densely dashdotdotted, blueviolet, thick](1.0, -0.5) -- (2.0, 2.0);
\draw[dashdotted, darkgreen, thick](1.0, -0.5) -- (2.0, -2.0);
\end{axis}

\coordinate [label = {$X$}] (X) at (2.0, 2.8);

\end{tikzpicture}
					\quad
					\begin{tikzpicture}[baseline={(0,0)}]

\definecolor{darkgray176}{RGB}{176,176,176}
\definecolor{darkorange25512714}{RGB}{255,127,14}
\definecolor{steelblue31119180}{RGB}{31,119,180}
\definecolor{darkgreen}{RGB}{0, 127, 0}
\definecolor{blueviolet}{RGB}{138,43,226}

\begin{axis}[
tick align=outside,
tick pos=left,
x grid style={darkgray176},
xmin=0.0, xmax=2.01,
xtick style={color=black},
xtick = {1.0, 2.0},
xticklabels = {\small{$1$}, \small{$2$}},
y grid style={darkgray176},
ymin=-2.05, ymax=2.01,
ytick style={color=black},
ytick = {-2.0, -0.5, 0.0, 0.5, 2.0},
yticklabels = {\small{$-2$}, \small{$-\frac12$}, , \small{$\frac12$}, \small{$2$}}
]
\draw[-, steelblue31119180, thick](0.0, 0.0) -- (1.0, 0.5);
\draw[dashed, darkorange25512714, thick](0.0, 0.0) -- (1.0, -0.5);
\draw[-, steelblue31119180, thick](1.0, 0.5) -- (2.0, 2.0);
\draw[-, steelblue31119180, thick](1.0, 0.5) -- (2.0, -2.0);
\draw[densely dashdotdotted, blueviolet, thick](1.0, -0.5) -- (2.0, 0.0);
\draw[dashdotted, darkgreen, thick](1.0, -0.5) -- (2.0, -2.0);
\end{axis}

\coordinate [label = {$\bar X$}] (Y) at (2.0, 2.8);

\end{tikzpicture}
					\caption{The laws of $X$ and $\bar X$ coupled with the Knothe--Rosenblatt rearrangement $\pi^\kr_{\mu, \nu}$.}
					\label{subfig:kr-example}
				\end{subfigure}
				\vskip 3ex
				\begin{subfigure}{\textwidth}
					\centering
					\begin{tikzpicture}[baseline={(0,0)}]

\definecolor{darkgray176}{RGB}{176,176,176}
\definecolor{darkorange25512714}{RGB}{255,127,14}
\definecolor{steelblue31119180}{RGB}{31,119,180}
\definecolor{darkgreen}{RGB}{0,127,0}
\definecolor{blueviolet}{RGB}{138,43,226}

\begin{axis}[
tick align=outside,
tick pos=left,
x grid style={darkgray176},
xmin=0.0, xmax=2.01,
xtick style={color=black},
xtick = {1.0, 2.0},
xticklabels = {\small{$1$}, \small{$2$}},
y grid style={darkgray176},
ymin=-2.05, ymax=2.01,
ytick style={color=black},
ytick = {-2.0, -0.5, 0.0, 0.5, 2.0},
yticklabels = {\small{$-2$}, \small{$-\frac12$}, , \small{$\frac12$}, \small{$2$}}
]
\draw[-, steelblue31119180, thick](0.0, 0.0) -- (1.0, 0.5);
\draw[dashed, darkorange25512714, thick](0.0, 0.0) -- (1.0, -0.5);
\draw[-, steelblue31119180, thick](1.0, 0.5) -- (2.0, 0.0);
\draw[densely dashdotdotted, blueviolet, thick](1.0, -0.5) -- (2.0, 2.0);
\draw[dashdotted, darkgreen, thick](1.0, -0.5) -- (2.0, -2.0);
\end{axis}

\coordinate [label = {$X$}] (X) at (2.0, 2.8);

\end{tikzpicture}
					\quad
					\begin{tikzpicture}[baseline={(0,0)}]

\definecolor{darkgray176}{RGB}{176,176,176}
\definecolor{darkorange25512714}{RGB}{255,127,14}
\definecolor{steelblue31119180}{RGB}{31,119,180}
\definecolor{darkgreen}{RGB}{0,127,0}
\definecolor{blueviolet}{RGB}{138,43,226}

\begin{axis}[
tick align=outside,
tick pos=left,
x grid style={darkgray176},
xmin=0.0, xmax=2.01,
xtick style={color=black},
xtick = {1.0, 2.0},
xticklabels = {\small{$1$}, \small{$2$}},
y grid style={darkgray176},
ymin=-2.05, ymax=2.01,
ytick style={color=black},
ytick = {-2.0, -0.5, 0.0, 0.5, 2.0},
yticklabels = {\small{$-2$}, \small{$-\frac12$}, , \small{$\frac12$}, \small{$2$}}
]
\draw[dashed, darkorange25512714, thick](0.0, 0.0) -- (1.0, 0.5);
\draw[-, steelblue31119180, thick](0.0, 0.0) -- (1.0, -0.5);
\draw[densely dashdotdotted, blueviolet, thick](1.0, 0.5) -- (2.0, 2.0);
\draw[dashdotted, darkgreen, thick](1.0, 0.5) -- (2.0, -2.0);
\draw[-, steelblue31119180, thick](1.0, -0.5) -- (2.0, 0.0);
\draw[-, steelblue31119180, thick](1.0, -0.5) -- (2.0, -2.0);
\end{axis}

\coordinate [label = {$\bar X$}] (Y) at (2.0, 2.8);

\end{tikzpicture}
					\caption{The laws of $X$ and $\bar X$ coupled with an alternative bicausal coupling $\pi^\antitone_{\mu, \nu}$.}
					\label{subfig:at-example}
				\end{subfigure}
				\caption{Two couplings of the laws of $X$ and $\bar X$. In each case, paths with the same colour and line style are coupled with each other.}
			\end{figure}
			
			We claim that $X$ is increasing with respect to second order stochastic dominance, but not with respect to first order stochastic dominance. Denote by $F_-, F_+\colon \R \to [0, 1]$ the distribution functions of the second marginal of $X$, conditional on $X_1 = -\frac12$ and $X_1 = \frac12$, respectively; that is
			\begin{equation}
				F_-(x) =
				\begin{cases}
					0, & x < -2,\\
					\frac12, & x \in [-2, 2),\\
					1, & x \geq 2,
				\end{cases}
				\quad
				F_+(x) =
				\begin{cases}
					0, & x < 0,\\
					1, & x \ge 0.
				\end{cases}
			\end{equation}
			We see that, for any $x \in [0, 2)$, $F_-(x) < F_+(x)$, and so $X$ is not stochastically increasing. However, integrating, we find that 
			\begin{equation}
				\int_{-\infty}^x(F_-(u) - F_+(u))\D u =
				\begin{cases}
					0, & x < -2,\\
					\frac{x + 2}{2} \ge 0 , & x \in [-2, 0),\\
					\frac{2 - x}{2} \ge 0 , & x \in [0, 2),\\
					0, & x \geq 2.
				\end{cases}
			\end{equation}
			Hence $X$ is increasing with respect to second order stochastic dominance. By inspection of the conditional distribution functions for $\bar X$, we see that $\bar X$ is stochastically monotone, and therefore also increasing with respect to second order stochastic dominance.
			Thus $X$ and $\bar X$ are co-monotone with respect to \emph{second order} stochastic dominance, but they are \emph{not} stochastically co-monotone (i.e.~co-monotone with respect to first order stochastic dominance).
			
			Let $\mu = \Law(X)$ and $\nu = \Law(\bar X)$. We now show that second order monotonicity is not sufficient for optimality of the Knothe--Rosenblatt rearrangement.
			We aim to solve \eqref{eq:bcot} for the quadratic cost; that is we want to find
			\begin{equation}
				\AW_2^2(\mu, \nu) = \inf_{\substack{\pi \in \cplba(\mu, \nu)\\\Law(X, \bar X) = \pi}} \E\!\left[|X_1 - \bar X_1|^2 + |X_2 - \bar X_2|^2\right]\!.
			\end{equation}
			
			We first compute the cost induced by the Knothe--Rosenblatt rearrangement $\pi^{\kr}_{\mu, \nu}$, shown in \Cref{subfig:kr-example}. Under this coupling, the first marginals are coupled monotonically, and so we have $\E[|X_1 - \bar X_1|^2] = 0$. Now, conditional on the event that $(X_1, \bar X_1) = (\frac12, \frac12)$, we couple the second marginals monotonically and compute $\E[|X_2 - \bar X_2|^2] = \frac12(|0 - 2|^2 + |0+2|^2) = 4$. Conditional on the complementary event $(X_1, \bar X_1) = (-\frac12, -\frac12)$, we find $\E[|X_2 - \bar X_2|^2] = \frac12(|2 - 0|^2 + |-2 + 2|^2) = 2$. We conclude that, for $\Law(X, \bar X) = \pi^\kr_{\mu, \nu}$,
			\begin{equation}
				\E\!\left[|X_1 - \bar X_1|^2 + |X_2 - \bar X_2|^2\right]\! = 0 + \frac12(4 + 2) = 3.
			\end{equation}
			
			Now consider an alternative bicausal coupling $\pi^\antitone_{\mu, \nu}$, defined by taking the \emph{antitone} coupling between the first marginals (i.e.~the opposite of the monotone coupling) and then proceeding as in the Knothe--Rosenblatt rearrangement by coupling the conditional second marginals monotonically, as shown in \Cref{subfig:at-example}. Under this coupling there is a non-zero contribution to the cost from the first marginals: $\E[|X_1 - \bar X_1|^2] = 1$. Conditional on $(X_1, \bar X_1) = (\frac12, - \frac12)$, we have $\E[|X_2 - \bar X_2|^2] = \frac12(|0 - 0|^2 + |0 + 2|^2) = 2$, and conditional on the complement, $\E[|X_2 - \bar X_2|^2] = \frac12(|2 - 2|^2 + |-2 + 2|^2) = 0$. Thus taking $\Law(X, \bar X) = \pi^\antitone_{\mu, \nu}$ gives a total cost of
			\begin{equation}
				\E\!\left[|X_1 - \bar X_1|^2 + |X_2 - \bar X_2|^2\right]\! = 1 + \frac12(2 + 0) = 2 < 3.
			\end{equation}
			Hence the Knothe--Rosenblatt rearrangement is not optimal.
		\end{example}
		
\section{General optimality of the synchronous coupling}\label{sec:mixing-assumptions}

	In the previous sections we have identified different classes of coefficients of the SDEs \eqref{eq:sde-correlated} for which the synchronous coupling is optimal for \eqref{eq:bcot-cont}.
	In fact, the same optimality result holds for SDEs whose coefficients belong to two different classes; that is they satisfy one of \Cref{ass:regular,ass:growth-disc,ass:zvonkin}.
	
	\begin{theorem}\label{thm:mixed-assumptions}
		Suppose that $(b, \sigma)$ and $(\bar b, \bar \sigma)$ each satisfy any one of \Cref{ass:regular,ass:growth-disc,ass:zvonkin}, where the two pairs of coefficients may each satisfy a different assumption. Write $\mu, \nu$ for the laws of $X, \bar X$, respectively. 
		Let $c \colon [0, T] \times \R \times \R \to \R$ satisfy \Cref{ass:cost}. Then the synchronous coupling attains the value $V_c(\mu, \nu)$ of the bicausal optimal transport problem \eqref{eq:bcot-cont}.
	\end{theorem}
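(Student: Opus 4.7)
The plan is to reduce the mixed statement to the three pure optimality results already proved in the paper, namely \Cref{cor:regular}, \Cref{thm:optimality-growth-disc}, and \Cref{thm:zvonkin-optimality}, by a two-stage reduction: first eliminate any coefficient satisfying \Cref{ass:zvonkin} via a unilateral Zvonkin transformation, then treat the remaining regular/growth-disc combinations.

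For the first stage, whenever one of the two pairs, say $(b,\sigma)$, satisfies \Cref{ass:zvonkin}, I will apply the transform $u_t$ of \Cref{sec:zvonkin} on the first coordinate only, replacing $X$ by $Z_t = u_t(X_t)$ and the cost by $\hat c_t(z,\bar x) \coloneqq c_t(v_t(z),\bar x)$. Because $v_t$ is Lipschitz and strictly increasing, the arguments of \Cref{lem:transformed-cost} and \Cref{lem:z-transformed-value} adapt with only notational changes to show that $\hat c$ satisfies \Cref{ass:cost} and that $V_c(\mu,\nu) = V_{\hat c}(\tilde \mu,\nu)$. Since $u_t$ is a state-by-state deterministic increasing map, it preserves the synchronous coupling. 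The transformed SDE for $Z$ has zero drift and a bounded, non-degenerate, H\"older diffusion (\Cref{cor:lipschitz-coeffs}), hence satisfies \Cref{ass:regular}. Performing the analogous reduction on $(\bar b,\bar\sigma)$ if needed, I may assume both pairs satisfy either \Cref{ass:regular} or \Cref{ass:growth-disc}. The pure-regular and pure-growth-disc subcases are then handled by \Cref{cor:regular} and \Cref{thm:optimality-growth-disc}, leaving only the mixed regular/growth-disc subcase.

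For this remaining subcase, suppose $(b,\sigma)$ satisfies \Cref{ass:regular} and $(\bar b,\bar\sigma)$ satisfies \Cref{ass:growth-disc}. I will mollify the regular coefficients to obtain a sequence $(b^N,\sigma^N)$ satisfying \Cref{ass:lipschitz} with uniform linear growth and such that $b^N_t \to b_t$, $\sigma^N_t \to \sigma_t$ uniformly on compact sets. For each $N$, I will apply \Cref{prop:general-optimality} to the pair with coefficients $(b^N,\sigma^N)$ and $(\bar b,\bar\sigma)$, taking the monotone Euler--Maruyama scheme (see \Cref{ex:comon-scheme}) on the Lipschitz side and the transformed monotone semi-implicit scheme of \Cref{def:mono-si-em} on the growth-disc side. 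Both schemes use the same truncated Brownian increments, so the common $f$ required by \Cref{ass:it-2} is shared; stochastic monotonicity holds by \citep[Lemma 3.11]{BaKaRo22} and by \Cref{lem:stochdom} respectively; and strong $L^p$ convergence follows from classical Lipschitz theory and from \Cref{thm:implicit-transformed-convergence}. Thus the synchronous coupling attains the value of \eqref{eq:bcot-cont} for the pair $(\mu^N,\nu)$ for every $N$, where $\mu^N$ and $\nu$ denote the respective laws.

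The main technical point will be the passage $N\to\infty$. \Cref{lem:stability} as stated assumes spatial continuity of both limit drifts, which fails here for $\bar b$; however, inspection of its proof in \Cref{app:prelim} shows that continuity of $\bar b$ is invoked only to secure well-posedness of $\bar X$ and stability of $\bar X$ under approximation of \emph{its own} coefficients, neither of which is needed here because $(\bar b,\bar\sigma)$ is held fixed and \Cref{thm:existence-uniqueness} already provides a unique strong solution with bounded moments of all orders. I will therefore establish a one-sided analogue of \Cref{lem:stability}: strong $L^p$ convergence $X^N \to X$ via a Gr\"onwall argument on the continuous regular side (using uniform convergence on compacts and uniform linear growth), the lifting of any $\pi\in\cplba(\mu,\nu)$ to a sequence $\pi^N\in\cplba(\mu^N,\nu)$ driven by the same correlation via the correlated-Brownian-motion characterisation \citep[Proposition 2.2]{BaKaRo22}, and convergence of the cost functional using the polynomial growth from \Cref{ass:cost} together with the available moment bounds. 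Concatenating this with the first stage then yields the theorem. The hard part is ensuring this one-sided stability is watertight despite the discontinuity of $\bar b$; but because the approximation acts only on the continuous side and the fixed side possesses a genuine strong solution with all moments, no new ideas beyond those in the proof of \Cref{lem:stability} are required.
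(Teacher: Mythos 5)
Your proof follows essentially the same two-stage reduction as the paper's: first eliminate Zvonkin coefficients by a unilateral Zvonkin transformation (transforming the cost and measure only on the affected coordinate, after which that side satisfies \Cref{ass:regular}), then handle the remaining cases by Lipschitz approximation of any regular coordinate plus a stability argument; the ingredients (\Cref{prop:general-optimality}, \Cref{lem:stability}, \citep[Proposition 2.2]{BaKaRo22}, \Cref{cor:regular}, \Cref{thm:optimality-growth-disc,thm:zvonkin-optimality}) coincide. What you add, and rightly flag as the main technical point, is that \Cref{lem:stability} cannot be applied literally when the fixed coordinate has \Cref{ass:growth-disc} coefficients: that lemma requires both limit drifts to be spatially continuous with uniform linear growth, whereas the growth-disc side may have discontinuous, exponentially growing drift. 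The paper invokes \Cref{lem:stability} in this sub-case without comment; your one-sided variant, which holds the growth-disc side fixed (using the strong solution and moment bounds of \Cref{thm:existence-uniqueness}) and perturbs only the regular side, is the right remedy. The only imprecision is calling the strong convergence $X^N \to X$ ``a Gr\"onwall argument'': since the regular limit coefficients are only continuous with linear growth and pathwise unique, a raw Gr\"onwall bound does not close, and one needs a strong-convergence result for SDE approximations under local-uniform coefficient convergence plus pathwise uniqueness (e.g.\ of Kaneko--Nakao or Gikhman--Skorokhod type), or a localisation-plus-modulus-of-continuity argument. With that adjustment, the proof is sound.
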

	
	\begin{proof}
		First suppose that $(b, \sigma)$ satisfies either the global Lipschitz condition \Cref{ass:lipschitz} (this implies \Cref{ass:regular}) or \Cref{ass:growth-disc}. Then take $X^h$ to be the monotone Euler--Maruyama scheme defined in \cite[Definition 3.13]{BaKaRo22}, or the transformed monotone semi-implicit Euler--Maruyama scheme defined in \Cref{def:mono-si-em}, respectively.		
		If $(\bar b, \bar \sigma)$ also satisfies one of the above conditions, then taking $\bar X^h$ to be the respective numerical scheme, $(X, \bar X)$ and $(X^h, \bar X^h)$ satisfy \Cref{ass:comon-scheme}. Then we can conclude by \Cref{prop:general-optimality}.
		If $(\bar b, \bar \sigma)$ satisfies \Cref{ass:regular}, but not necessarily a global Lipschitz condition, then we can approximate $(\bar b, \bar \sigma)$ locally uniformly in space by a sequence of Lipchitz functions that have uniform linear growth. Then we conclude by applying the stability result \Cref{lem:stability} to the previous case.
				
		Now suppose that $(b, \sigma)$ satisfies either \Cref{ass:regular} or \Cref{ass:growth-disc}, and $(\bar b, \bar \sigma)$ satisfies \Cref{ass:zvonkin}. Then we adapt the proof of \Cref{thm:zvonkin-optimality} as follows. Take the transformation $\bar u$ to be as in the proof of \Cref{thm:zvonkin-optimality}, and replace the transformation $u$ with the identity. Then, following the proof of \Cref{thm:zvonkin-optimality}, we show that \eqref{eq:bcot-cont} is equivalent to a bicausal transport problem between the laws of transformed SDEs, whose coefficients satisfy either \Cref{ass:regular} or \Cref{ass:growth-disc}. From the cases considered above, we know that the synchronous coupling is optimal for the transformed problem, and by equivalence of the problems, the synchronous coupling is also optimal for \eqref{eq:bcot-cont}.
				
		Noting that the roles of $\mu$ and $\nu$ are interchangeable, all remaining cases are covered by \Cref{cor:regular} and \Cref{thm:optimality-growth-disc,thm:zvonkin-optimality}.
	\end{proof}

\section{Computation of the adapted Wasserstein distance}\label{sec:numerics}
	
	Our results immediately suggest an efficient method for computing the value of a bicausal optimal transport problem. This is in sharp contrast to the general situation in classical and constrained optimal transport problems, when the optimiser is not known. Therefore our results contribute to the efficiency of computations in applications, such as in the calculation of robust bounds for stochastic optimisation problems, as discussed in \Cref{sec:finance}. In \Cref{sec:examples} we present numerical examples, in which we explicitly compute adapted Wasserstein distances.
	
		In classical optimal transport, entropic regularisation is used in computation. Sinkhorn's algorithm can be used to efficiently compute the value of the regularised problem, but not of the original unregularised problem; see \citet{Cu13,PeCu19}. 
		For bicausal optimal transport in discrete time, \citet{PiWe21,EcPa22} introduce two different adapted analogues of Sinkhorn's algorithm. However, each of these algorithms either include a discretisation of the marginal measures or work only on a discrete state space, and generalising them to both continuous time and space would be prohibitively costly. 
	Under rather strong assumptions on the coefficients, \citet{BiTa19} consider a stochastic control problem for SDEs, which is shown to be equivalent to finding the adapted $2$-Wasserstein distance in \citep[Proposition 2.2]{BaKaRo22}. This control formulation gives rise to a Hamilton--Jacobi--Bellman equation, which one could solve numerically.
	
	\subsection{Computation via the synchronous coupling}\label{sec:computation-sync}
	
		Let $c \colon [0, T] \times \R \times \R \to \R$ satisfy \Cref{ass:cost}. Suppose that $(b, \sigma)$ satisfies one of \Cref{ass:regular,ass:growth-disc,ass:zvonkin}, and that $(\bar b, \bar \sigma)$ also satisfies one of these assumptions. Then, by \Cref{thm:mixed-assumptions}, the synchronous coupling is optimal for $V_c(\mu, \nu)$. Writing $(X, \bar X)$ for the solution of the SDEs \eqref{eq:sde-correlated} driven by a common Brownian motion $W$, we thus have
		\begin{equation}
			V_c(\mu, \nu) = \E\!\left[\int_0^T c_t(X_t, \bar X_t) \D t\right]\!.
		\end{equation}
		In particular, taking $c_t(x, y) = |x - y|^p$, for all $t \in [0, T]$, $x, y \in \R$, the adapted Wasserstein distance is given by
		\begin{equation}
			\AW_p^p(\mu, \nu) = \E\!\left[\int_0^T |X_t - \bar X_t|^p \D t\right]\!.
		\end{equation}
		For $N \in \N$, set $h = T/N$ and take $X^h$, $\bar X^h$ to be c\`adl\`ag processes adapted to the natural filtration of $W$ such that $X^h$ (resp.~$\bar X^h$) approximates $X$ (resp.~$\bar X$) strongly in $L^p$. Define $S^h \coloneqq \E[\int_0^T|X^h_t - \bar X^h_t|^p\D t]^{\frac 1p}$. Then
		\begin{equation}\label{eq:AW-decoupling}
			\bigl|\AW_p(\mu, \nu) - S^h\bigr| \leq \E\biggl[\int_0^T |X_t - X^h_t|^p \D t \biggr]^{\frac 1p} + \E\biggl[\int_0^T |\bar X_t - \bar X^h_t|^p \D t \biggr]^{\frac 1p} \xrightarrow{h \to 0} 0.
		\end{equation}
		Note that each term in the sum on the right hand side of \eqref{eq:AW-decoupling} depends on only one of the processes $X, \bar X$ and so these terms can be estimated separately. Thus the problem of computing the adapted Wasserstein distance is reduced to the well-studied problem of numerically solving two one-dimensional SDEs. Monte Carlo methods can then be applied to compute these approximations.
		
		There are now two error sources to consider in the numerical computation of $\AW_p(\mu, \nu)$: the convergence rate of the schemes $X^h, \bar X^h$, and the computational cost of Monte Carlo estimation for $S^h$. Given numerical schemes for which we have strong convergence rates, one can efficiently implement a multi-level Monte Carlo (MLMC) method to reduce the overall computational cost; see, for example, ~\citet{GiSz13}.

		\begin{remark} \label{rem:no-mon-needed}
			The bound \eqref{eq:AW-decoupling} shows that, for each SDE, we may take any numerical scheme that converges strongly in $L^p$. For example, for any SDE with Lipschitz coefficients, we can apply the Euler--Maruyama scheme defined in \Cref{sec:num-sdes}.
			In the case of discontinuous and exponentially growing drift, we proved strong $L^p$ convergence of the transformed (monotone) semi-implicit Euler--Maruyama scheme in \Cref{thm:implicit-transformed-convergence}, which allows us to compute the adapted Wasserstein distance when one of the SDEs has coefficients satisfying \Cref{ass:growth-disc}. When additional conditions hold, however, we may exploit the existing convergence results that are discussed in \Cref{rem:growth-disc} to improve computational efficiency. While stochastic monotonicity of the scheme is essential to the proof of optimality in \Cref{thm:optimality-growth-disc}, this condition is not needed for the computation of the adapted Wasserstein distance. Under \Cref{ass:zvonkin}, suitable numerical schemes are discussed in \Cref{rem:zvonkin}.
		\end{remark}
		
		\begin{example}
			As a toy example, consider the situation that $b, \bar b$ are Lipschitz and $\sigma, \bar \sigma$ are strictly positive constants. In this case, taking $X^h$, $\bar X^h$ as the standard Euler--Maruyama schemes for $X$, $\bar X$, respectively, \eqref{eq:AW-decoupling} implies that $|\AW_p(\mu, \nu) - S^h| \leq C(p, T) N^{-1}$. This follows from the order 1 strong convergence of the Euler--Maruyama scheme in the case of Lipschitz drift and additive noise, where it is equivalent to the Milstein scheme \cite{Mi75}.
		\end{example}
				
	\subsection{Numerical examples}\label{sec:examples}
	
	In each of the examples below, we consider an SDE with solution $X$ and, for $k \in \N$, the solution $\bar X^{(k)}$ of an SDE obtained by some perturbation of the coefficients of the original SDE, where the magnitude of the perturbation increases with the parameter $k$. We fix the terminal time $T = 1$ and consider the adapted Wasserstein distance $\AW_2(\mu, \nu^{(k)})$, where $\mu$ and $\nu^{(k)}$ denote the laws of $X$ and $\bar X^{(k)}$, respectively. Supposing that the coefficients of the SDEs for $X$ and $\bar X^{(k)}$ satisfy the assumptions of \Cref{thm:mixed-assumptions}, this distance is attained by the synchronous coupling. For $h > 0$, let $X^h$ (resp.~$\bar X^{(k),h}$) denote numerical approximations of $X$ (resp.~$\bar X^{(k)}$) that converge strongly in $L^2$ with known rates and are each driven by a common one-dimensional Brownian motion. We fix the step size $h = 2^{-12}$ for the numerical approximations and average over $2^{12}$ sample paths. In light of the bound \eqref{eq:AW-decoupling}, we can then approximate the squared adapted Wasserstein distance $\AW_2^2(\mu, \nu^{(k)})$ by the Monte Carlo estimate
	\begin{equation}\label{eq:aw-estimate}
		\widehat \AW_2^2(\mu, \nu^{(k)}) =  \frac{1}{2^{12}(2^{12}+1)}\sum_{i=1}^{2^{12}}\sum_{j=0}^{2^{12}}|X^h_{jh}(\omega_i) - \bar X_{jh}^{(k),h}(\omega_i)|^2.
	\end{equation}
	In each example below, we plot $\widehat\AW_2^2(\mu, \nu^{(k)})$ against the size of the perturbation.
	
	\begin{example}[An SDE with discontinuous drift]\label{ex:num-disc-drift}
		Consider SDEs
		\begin{equation}\label{eq:disc}
			\begin{split}			
				\ds X_t & = \ds W_t, \quad X_0 = x_0,\\
				\ds \bar X_t^{(k)} &= \frac{k}{10}\sign(\bar X_t^{(k)}) \D t +  \ds W_t, \quad \bar X^{(k)}_0 = x_0, \quad k\in\{1,\dots,10\}.
			\end{split}
		\end{equation}
		The solution $X$ of the first SDE in \eqref{eq:disc} can be approximated by the Euler--Maruyama scheme with strong convergence order $1$. For the SDEs in the second line of \eqref{eq:disc}, however, \citet{MuYa20b,PrScSz23,ElMuYa24} show an upper bound of $3/4$ for the strong convergence rate of any numerical scheme that uses finitely many approximations of the driving Brownian motion (such as the Euler--Maruyama scheme). This is one of the structural properties that distinguishes the class of SDEs with discontinuous drift. For the adapted Wasserstein distance between the induced measures, however, we observe no gap comparable to the gap between the strong convergence rates. Each of the SDEs in \eqref{eq:disc} satisfies \Cref{ass:growth-disc}, and so \Cref{thm:mixed-assumptions} implies that the synchronous coupling attains the adapted Wasserstein distance between the laws of the solutions. Thus, we can estimate this distance by \eqref{eq:aw-estimate}, taking $X^h, \bar X^{(k),h}$ to be the respective Euler--Maruyama schemes, each driven by a common Brownian motion. The relationship between this approximation of the adapted Wasserstein distance and the size of the jump in the drift is shown in \Cref{fig:disc}.
			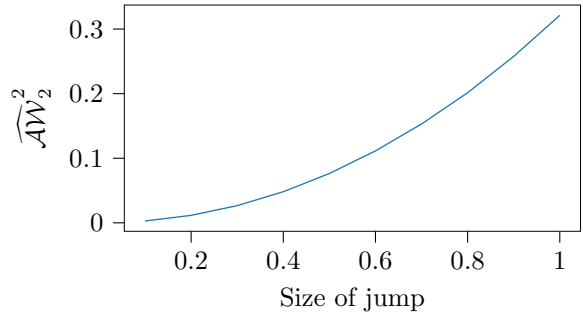
\begin{figure}[ht]
				\centering
\begin{tikzpicture}[baseline={(0,0)}]

\definecolor{darkgrey176}{RGB}{176,176,176}
\definecolor{steelblue31119180}{RGB}{31,119,180}

\begin{axis}[
scale = 0.8,
tick align=outside,
tick pos=left,
grid style={darkgrey176},
xmin=0.0550000000000001, xmax=1.045,
xtick style={color=black},
ymin=-0.0130603025651276, ymax=0.337186671227656,
ytick style={color=black},
xlabel = {Size of jump},
ylabel = {$\widehat\AW_2^2$},
axis lines = box
]
\addplot [semithick, steelblue31119180]
table {%
0.1 0.00286001442545347
0.2 0.0115987181020346
0.3 0.0266394332318566
0.4 0.0481866189570397
0.5 0.0763767368429628
0.6 0.111403013254899
0.7 0.153030928449135
0.8 0.201457845500375
0.9 0.257714189576319
1 0.321266354237075
};
\end{axis}

\end{tikzpicture}
				\caption{Approximation of the adapted Wasserstein distance for the SDEs \eqref{eq:disc}.} 
				\label{fig:disc}
			\end{figure}
	\end{example}
				
	\begin{example}[CIR model]\label{ex:cir}
		Consider a CIR process $(X_t)_{t \in [0, 1]}$, solving the SDE
		\begin{equation}\label{eq:cir}
			\ds X_t = \kappa(\eta - X_t)\D t + \gamma \sqrt X_t \D W_t, \quad X_0 = x_0,
		\end{equation}
		for some $\kappa \in \R$, $\eta \in [0, \infty)$, $\gamma, x_0 \in (0, \infty)$. We suppose that $2\kappa\eta \geq \gamma^2$, so that $X_t > 0$ for all $t \in [0, 1]$ by Feller's test; see, for example, \citep[Theorem 5.29]{KaSh91}. We call the parameter $\eta$ the mean reversion level, $\kappa$ the mean reversion speed, and $\gamma$ the diffusion parameter.
		Such a process is commonly used to model interest rates.
		
		Here we have a Lipschitz-continuous drift coefficient and a $1/2$-H\"older continuous diffusion coefficient. The classical result of \citet[Theorem 1]{YW} gives pathwise uniqueness for \eqref{eq:cir}. Thus the coefficients of \eqref{eq:cir} satisfy \Cref{ass:regular}. Consider also the process $\bar X^{(k)}$ solving the same SDE \eqref{eq:cir} with perturbed parameters $\bar \kappa, \bar \eta, \bar \gamma$ and $\bar X^{(k)}_0 = x_0$. Since the coefficients of this SDE also satisfy \Cref{ass:regular}, we have by \Cref{thm:mixed-assumptions} that the synchronous coupling attains the adapted Wasserstein distance between their laws.
		
		Since the CIR process remains positive, its approximation requires a modification to the standard Euler--Maruyama scheme. For example, we can take the symmetrised Euler--Maruyama scheme that is defined in \citet{BeBoDi08} by taking the absolute value at each step of the Euler--Maruyama scheme. Under the condition that $2\kappa \eta \geq \gamma^2$, \citep[Theorem 2.2]{BeBoDi08} shows that the symmetrised Euler--Maruyama scheme converges strongly in $L^2$ at rate $1/2$. Note that \cite[Theorem 1.1]{HeJe19} shows that this convergence rate does not hold when $2\kappa \eta < \gamma^2$.
		In this example we fix the parameters $\kappa =  \eta = \gamma = 1$ for the process $X$, and for the process $\bar X^{(k)}$ we perturb each of these parameters in turn, fixing the other two equal to those of the process $X$. For each perturbation, we approximate $X, \bar X^{(k)}$ by the numerical scheme described above, taking the same Brownian increments in both $X^h$ and $\bar X^{(k), h}$. We then estimate the adapted Wasserstein distance between the laws of $X$ and $\bar X^{(k)}$ by \eqref{eq:aw-estimate}.
					
		Figure \ref{fig:cir} shows an approximation of the adapted Wasserstein distance plotted against the absolute size of the perturbation for each parameter on the left-hand side, and the same on a log-scale on the right-hand side.
		As expected, we observe clearly in \Cref{fig:cir} that increasing the perturbation in each of the parameters increases the adapted Wasserstein distance between the reference measure and the measure induced by the process with perturbed parameters. We observe that a perturbation of the diffusion parameter has a bigger effect than that of the mean reversion level, which has a much bigger effect than a perturbation of the mean reversion speed. This behaviour depends, however, on the choice of the parameters for the process $X$.
		As the log-plot confirms, the adapted Wasserstein distance grows exponentially in the size of the perturbation.
		This observation confirms that the adapted Wasserstein distance successfully captures the difference between CIR models with different parameters.
		\begin{figure}[ht]
			\centering
\begin{tikzpicture}[baseline={(0,0)}]

\definecolor{darkgrey176}{RGB}{176,176,176}
\definecolor{green01270}{RGB}{0,127,0}
\definecolor{steelblue31119180}{RGB}{31,119,180}
\definecolor{darkorange25512714}{RGB}{255,127,14}
\definecolor{blueviolet}{RGB}{138,43,226}

\begin{axis}[
scale = 0.8,
tick align=outside,
tick pos=left,
x grid style={darkgrey176},
xmin=-0.00193603515625005, xmax=0.0524731445312501,
xtick style={color=black},
y grid style={darkgrey176},
ymin=-3.72773595943008e-05, ymax=0.00078300180256209,
ytick style={color=black},
xlabel = {Size of perturbation},
ylabel = {$\widehat\AW_2^2$},
axis lines = box,
ticklabel style={
        /pgf/number format/fixed,
        /pgf/number format/precision=4
},
scaled ticks=false
]
\addplot [semithick, steelblue31119180]
table {%
0.05 0.000445008837816177
0.0375 0.000250673572859574
0.0249999999999999 0.000111296213234569
0.015625 4.35038153545894e-05
0.00937499999999991 1.56549651150653e-05
0.00546875000000002 5.32832572989199e-06
0.00312500000000004 1.73961712344967e-06
0.00175781249999996 5.50469585604227e-07
0.0009765625 1.69890433240962e-07
0.000537109374999956 5.13931258627746e-08
};
\addplot [semithick, green01270, dash pattern=on 5.55pt off 2.4pt]
table {%
0.05 6.79976384591772e-05
0.0375 4.00477007179355e-05
0.0249999999999999 1.72227312866576e-05
0.015625 6.87277985716459e-06
0.00937499999999991 2.44186309605702e-06
0.00546875000000002 8.37421217604615e-07
0.00312500000000004 2.72210307470205e-07
0.00175781249999996 8.63504729858321e-08
0.0009765625 2.66130789064748e-08
0.000537109374999956 8.05686735335128e-09
};
\addplot [semithick, blueviolet, dash pattern=on 1.5pt off 2.475pt]
table {%
0.05 0.000745716386100436
0.0375 0.000417176730666245
0.0249999999999999 0.00018613835387795
0.015625 7.25257951063875e-05
0.00937499999999991 2.61501548875648e-05
0.00546875000000002 8.89005865898954e-06
0.00312500000000004 2.9044373240263e-06
0.00175781249999996 9.18701476455133e-07
0.0009765625 2.83598344106417e-07
0.000537109374999956 8.5780376460739e-08
};
\end{axis}

\end{tikzpicture}
			\quad
\begin{tikzpicture}[baseline={(0,0)}]

\definecolor{darkgrey176}{RGB}{176,176,176}
\definecolor{green01270}{RGB}{0,127,0}
\definecolor{steelblue31119180}{RGB}{31,119,180}
\definecolor{darkorange25512714}{RGB}{255,127,14}
\definecolor{blueviolet}{RGB}{138,43,226}

\begin{axis}[
scale = 0.8,
tick align=outside,
tick pos=left,
x grid style={darkgrey176},
xmin=-11.1895248953183, xmax=-3.99489967581922,
xtick style={color=black},
y grid style={darkgrey176},
ymin=-27.7120362762091, ymax=-9.56418291071711,
ytick style={color=black},
xlabel = {$\log_2(\mathrm{Size~of~perturbation})$},
ylabel = {$\log_2\bigl(\widehat\AW_2^2\bigr)$},
axis lines = box
]
\addplot [semithick, steelblue31119180]
table {%
-4.32192809488736 -11.1338783914548
-4.73696559416621 -11.9619024707892
-5.32192809488737 -13.1333078725897
-6 -14.4884985412509
-6.73696559416622 -15.9630201808039
-7.51457317282975 -17.5178862897424
-8.32192809488734 -19.132798754562
-9.15200309344509 -20.7928338096138
-10 -22.4888920495023
-10.8624964762502 -24.2138493558452
};
\addplot [semithick, green01270, dash pattern=on 5.55pt off 2.4pt]
table {%
-4.32192809488736 -13.8441558316385
-4.73696559416621 -14.6079210597193
-5.32192809488737 -15.8253265221055
-6 -17.1506748203581
-6.73696559416622 -18.6435862519539
-7.51457317282975 -20.1875431923416
-8.32192809488734 -21.808774967485
-9.15200309344509 -23.4652206789423
-10 -25.1632893315593
-10.8624964762502 -26.8871338505049
};
\addplot [semithick, blueviolet, dash pattern=on 1.5pt off 2.475pt]
table {%
-4.32192809488736 -10.3890853364213
-4.73696559416621 -11.2270536902647
-5.32192809488737 -12.3913370256653
-6 -13.7511462675398
-6.73696559416622 -15.2228209828457
-7.51457317282975 -16.7793756309492
-8.32192809488734 -18.3933098716027
-9.15200309344509 -20.0539005169548
-10 -21.7496475553169
-10.8624964762502 -23.4747771116163
};
\end{axis}

\end{tikzpicture}
			\caption{Approximation of the adapted Wasserstein distance for the CIR process; the dotted purple line shows the effect of a perturbed diffusion parameter $\gamma$, the solid blue line shows the effect of a perturbed mean reversion level $\eta$, and the dashed green line shows the effect of a perturbed mean reversion speed $\kappa$.} 
			\label{fig:cir}
		\end{figure}
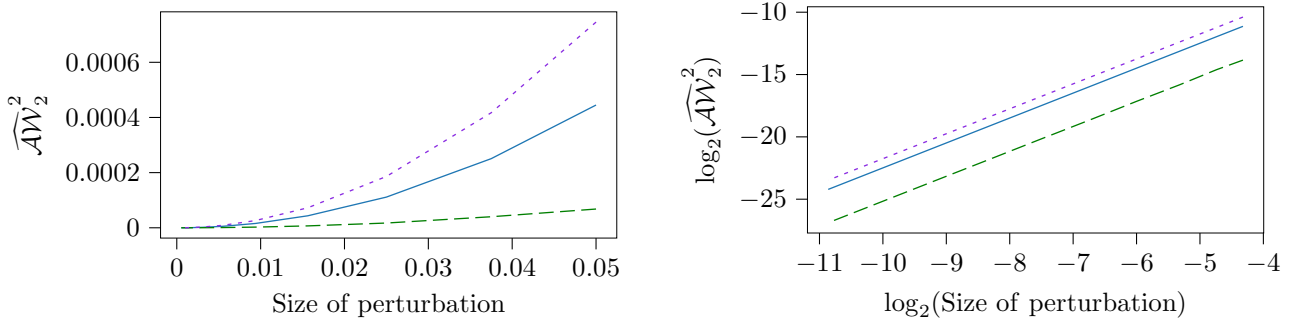
	\end{example}
	
\section{Application to robust optimisation}\label{sec:finance}
	We finally turn to applications in the robust approach to stochastic optimisation. Several results have been proven on stability of various stochastic optimisation problems \citep{AcBaZa20,BaBaBeEd19a} and (Lipschitz) continuity of optimal stopping problems \citep{AcBaZa20,BaBaBeEd19b,Al81}, in both discrete and continuous time, with respect to the adapted Wasserstein distance. In a discrete-time setup, the adapted Wasserstein distance has also been applied to quantify model-sensitivity of multi-period optimisation problems in \citep{AnPf14}, \citep{BaWi23}, and \citep{JiOb24}, with the latter also treating model-sensitivity of continuous-time problems.
	
	As a concrete application of the adapted Wasserstein distance to robust optimisation, we consider optimal stopping problems for continuous-time stochastic processes.
	Such problems are often analytically intractable and the numerical approximation of their solution may be computationally intensive; see, e.g.~\citep{BeChJeWe21} and the references therein.
	In \Cref{thm:optimal-stopping} we prove Lipschitz continuity of optimal stopping problems with respect to the adapted Wasserstein distance, which admits an efficient numerical approximation, following the approach of \Cref{sec:computation-sync}.
	Hence, our methods provide computationally feasible bounds on the discrepancy between the values of optimal stopping problems under different probability measures, making such problems tractable for applications.

	\subsection{Optimal stopping}		
		We consider the following class of optimal stopping problems. For an $\F$-adapted process $L \colon [0, T] \times \Omega \to \R$ and $p \geq 1$, define the value function $v^L \colon \Pc_p(\Omega) \to \R$ of the associated optimal stopping problem for any $\mu \in \Pc_p(\Omega)$ by
		\begin{equation}
			v^L(\mu) \coloneqq \inf_{\tau} \E^\mu[L(\tau (\omega), \omega)],                                                                                                                                                                                                                                                                                                                                                                                 
		\end{equation} where the infimum is taken over all stopping times with respect to $\F^\mu$, the completion of the canonical filtration under $\mu$.
		We prove the following Lipschitz estimate, which quantifies the model uncertainty in the choice of the measure $\mu \in \Pc_p(\Omega)$. 
		
		\begin{theorem}\label{thm:optimal-stopping}
			Let $p \geq 1$, let $L \colon [0, T] \times \Omega \to \R$ be $\F$-adapted and suppose that there exists a Lipschitz constant $C_L > 0$ such that, for all $t \in [0, T]$, $\gamma, \bar \gamma \in \Omega$,
			\begin{equation}
				|L(t, \gamma) - L(t, \bar \gamma)| \leq C_L \|\gamma - \bar \gamma\|_p.
			\end{equation}
			Then, for any $\mu, \nu \in \Pc_p(\Omega)$, we have the bound
			\begin{equation}\label{eq:optimal-stopping-bound}
				|v^L(\mu) - v^L(\nu)| \leq C_L \AW_p(\mu, \nu).
			\end{equation}
		\end{theorem}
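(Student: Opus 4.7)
The plan is to lift a nearly optimal stopping time for $v^L(\nu)$ to a randomized stopping strategy for $\mu$ using the causal structure of a bicausal coupling, and then to invoke the Lipschitz continuity of $L$ to bound the resulting cost difference. By symmetry of the right-hand side of \eqref{eq:optimal-stopping-bound} it suffices to prove $v^L(\mu) \leq v^L(\nu) + C_L\AW_p(\mu,\nu)$. Fix $\varepsilon > 0$, select an $\F^\nu$-stopping time $\bar\tau$ with $\E^\nu[L(\bar\tau(\bar\omega),\bar\omega)] \leq v^L(\nu) + \varepsilon$, and a near-optimal bicausal coupling $\pi \in \cplba(\mu,\nu)$ with $\E^\pi[\int_0^T|\omega_s - \bar\omega_s|^p \D s]^{1/p} \leq \AW_p(\mu,\nu) + \varepsilon$. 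Applying the path-Lipschitz hypothesis pointwise at $t = \bar\tau(\bar\omega)$ and using Jensen's inequality gives
\begin{equation*}
	\E^\pi[L(\bar\tau(\bar\omega),\omega)] \leq \E^\nu[L(\bar\tau(\bar\omega),\bar\omega)] + C_L\E^\pi[\|\omega-\bar\omega\|_p] \leq v^L(\nu) + \varepsilon + C_L\!\left(\AW_p(\mu,\nu) + \varepsilon\right)\!.
\end{equation*}

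The key step will be to show $\E^\pi[L(\bar\tau(\bar\omega),\omega)] \geq v^L(\mu)$. For $t \in [0,T]$ set $A_t(\omega) \coloneqq \pi[\bar\tau \leq t \mid \F^\omega_t](\omega)$. Since $\{\bar\tau \leq t\} \in \F^{\bar\omega}_t$, the causality condition \eqref{eq:causality} for $\pi$ yields $\pi[\bar\tau \leq t \mid \F^\omega_T] = A_t(\omega)$, so $A = (A_t)_{t\in[0,T]}$ is an $\F^\mu$-adapted, non-decreasing c\`adl\`ag process with $A_T = 1$, $\mu$-a.s. Writing $L(\bar\tau(\bar\omega),\omega) = \int_0^T L(t,\omega) \D \ind{\bar\tau \leq t}$, conditioning on $\F^\omega_T$ and exploiting the adaptedness of $L$ gives
\begin{equation*}
	\E^\pi[L(\bar\tau(\bar\omega),\omega)] = \E^\mu\!\left[\int_0^T L(t,\omega) \D A_t(\omega)\right]\!.
\end{equation*}
For $u \in [0,1]$ set $\tau_u(\omega) \coloneqq \inf\{t \in [0,T] \colon A_t(\omega) \geq u\}$. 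Then $\{\tau_u \leq t\} = \{A_t \geq u\} \in \F^\omega_t$, so each $\tau_u$ is an $\F^\mu$-stopping time, and $\E^\mu[L(\tau_u,\omega)] \geq v^L(\mu)$. The layer-cake identity $A_t(\omega) = \int_0^1 \ind{\tau_u(\omega) \leq t} \D u$ together with Fubini gives $\int_0^T L(t,\omega) \D A_t(\omega) = \int_0^1 L(\tau_u(\omega),\omega) \D u$, hence
\begin{equation*}
	\E^\mu\!\left[\int_0^T L(t,\omega) \D A_t(\omega)\right]\! = \int_0^1 \E^\mu[L(\tau_u,\omega)] \D u \geq v^L(\mu).
\end{equation*}

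Chaining the inequalities yields $v^L(\mu) \leq v^L(\nu) + \varepsilon + C_L(\AW_p(\mu,\nu) + \varepsilon)$; letting $\varepsilon \to 0$ and interchanging the roles of $\mu$ and $\nu$ then gives \eqref{eq:optimal-stopping-bound}. The main obstacle is the identification of $A$ as an $\F^\mu$-adapted randomized stopping strategy together with the representation formula for $\E^\pi[L(\bar\tau,\omega)]$; both rely crucially on the causality built into $\cplba(\mu,\nu)$ via \eqref{eq:causality}. The remaining steps amount to routine layer-cake and Fubini manipulations, and the elementary observation that a randomized $\F^\mu$-stopping strategy cannot achieve a strictly smaller cost than the non-randomized infimum $v^L(\mu)$.
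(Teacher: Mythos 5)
Your proposal is correct and follows essentially the same approach as the paper: both exploit causality to show that the conditional law of $\bar\tau$ given $\F^\omega_T$ is $\F^\omega$-adapted, define the quantile map $\tau_u$ (the paper's $\theta_u$), and use the quantile-function/layer-cake identity to bound the expected cost of the resulting randomized stopping strategy below by $v^L(\mu)$. The remaining differences are cosmetic, namely the order in which the Lipschitz estimate is applied and the fact that the paper optimises over the (a priori larger) set of merely causal couplings.
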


		\begin{example}\label{ex:os-lip}
			Let $f \colon \R \to \R$ be Lipschitz and define $L \colon [0, T] \times \Omega \to \R$, for any $t \in [0, T]$, $\gamma \in \Omega$,  by
			\begin{equation}
				L(t, \gamma) \coloneqq f\Big(\int_0^t \gamma_s \D s\Big).
			\end{equation}
			Then $L$ satisfies the conditions of \Cref{thm:optimal-stopping} for any $p \geq 1$. In this example, the value $v^L$ of the corresponding optimal stopping problem has a financial interpretation as an American-style Asian option.
		\end{example}
		
		This result could also be derived from \citep[Proposition 4.4]{AcBaZa20}, but we rather provide a direct proof, following a similar approach to the proof of the analogous discrete-time result \cite[Lemma 7.1]{BaBaBeEd19b}.
		
		\begin{proof}[Proof of \Cref{thm:optimal-stopping}]
			Let $\pi \in \cplba(\mu, \nu)$ and let $(X, Y)$ be a pair of random variables on $\Omega \times \Omega$ with joint law $\pi$. As before, let $\F^X$ and $\F^Y$ denote the completions of the natural filtrations of $X$ and $Y$ under the measures $\mu$ and $\nu$, respectively. Fix $\varepsilon > 0$, and take an $\varepsilon$-optimal $\F^Y$-stopping time $\tau^Y$; i.e.
			\begin{equation}\label{eq:os-suboptimal}
				\E[L(\tau^Y, Y)] \leq v^L(\nu) + \varepsilon.	
			\end{equation}

			For each $u \in [0, 1]$, define $\theta_u \coloneqq \inf\bigl\{\, t \geq 0 : \pi(\tau^Y \leq t \mid \F^X_T) \geq u \,\bigr\}$. We claim that $\theta_u$ is an $\F^X$-stopping time. By continuity of the paths of processes $X$ and $Y$, we know that, for any $u \in [0, 1]$ and $t \in [0, T]$,
			\begin{equation}
				\{\theta_u \leq t\} = \bigl\{\pi\bigl(\tau^Y \leq t \mid \F^X_T\bigr) \geq u\bigr\}.
			\end{equation}
			So it suffices to show that, for each $t \in [0, T]$, the conditional probability $\pi(\tau^Y \leq t \mid \F^X_T)$ is $\F^X_t$-measurable. Since $\pi \in \cplba(\mu, \nu)$, the causality condition \eqref{eq:causality} states that, under $\pi$, $\F^Y_t$ is conditionally independent of $\F^X_T$, given $\F^X_t$. An equivalent formulation of this conditional independence condition (see, e.g.~\citet[Theorem 8.9]{Ka21}) is that for any $F \in \F^Y_t$,
			\begin{equation}\label{eq:cond-indep}
				\pi\bigl(F \mid \F^X_T \vee \F^X_t\bigr) = \pi\bigl(F \mid \F^X_t\bigr).
			\end{equation}
			In fact, $\F^X_t \subset \F^X_T$, and so $\F^X_T \vee \F^X_t = \F^X_T$. By the definition of a stopping time, $\{\tau^Y \leq t\} \in \F^Y_t$. Therefore \eqref{eq:cond-indep} gives us
			\begin{equation}
				\pi\bigl(\tau^Y \leq t \mid \F^X_T\bigr) = \pi\bigl(\tau^Y \leq t \mid \F^X_t\bigr),
			\end{equation}
			which is then $\F^X_t$-measurable. This proves our claim that $\theta_u$ is an $\F^X$-stopping time for each $u \in [0, 1]$. Thus
			\begin{equation}
				v^L(\mu) \leq \inf_{u \in [0, 1]}\E[L(\theta_u, X)] \leq \int_0^1 \E[L(\theta_u, X)] \D u.
			\end{equation}
			Applying the tower property of conditional expectation and Fubini's theorem to exchange the order of integration, we arrive at
			\begin{equation}\label{eq:os-upper-bound}
				v^L(\mu) \leq \E \!\left[\int_0^1 \E^\pi\bigl[L(\theta_u, X) \mid \F^X_T\bigr] \D u\right]\!.
			\end{equation}
			
			Now fix a path $\gamma \in \Omega$ of the process $X$. We can define a probability measure $\P_\gamma$ on $[0, T]$ by $\P_\gamma([0, t)) = \pi(\tau^Y \leq t \mid \F^X_T)(\gamma)$, for all $t \in [0, T]$. Then $u \mapsto \theta_u(\gamma)$ is the quantile function of $\P_\gamma$, and we see that
			\begin{equation}
				\begin{split}
					\E\bigl[L(\tau^Y, X) \mid \F^X_T\bigr](\gamma) & = \int_0^T L(t, \gamma) \D \P_\gamma(t) = \int_0^1 L(\theta_u(\gamma), \gamma) \D u = \int_0^1 \E\bigl[L(\theta_u, X) \mid \F^X_T\bigr](\gamma) \D u.
				\end{split}
			\end{equation}
			Taking the expectation with respect to $\pi$ and substituting into \eqref{eq:os-upper-bound}, we find
			\begin{equation}
				v^L(\mu) \leq \E^\pi\Bigl[\E\bigl[L(\tau^Y, X) \mid \F^X_T\bigr](\omega)\Bigr] = \E\!\left[L(\tau^Y, X)\right]\!.
			\end{equation}
			By \eqref{eq:os-suboptimal}, the Lipschitz continuity of $L$ in the second argument, and Jensen's inequality, we have
			\begin{equation}
			\begin{split}
				v^L(\mu) - v^L(\nu) & \leq C_L \cdot \E\!\left[\|X - Y\|_p\right]\! + \varepsilon \leq C_L \cdot \E{\left[\|X - Y\|_p^p\right]}^{\frac 1 p} + \varepsilon.
			\end{split}
			\end{equation}
			Exchanging the roles of $\mu$ and $\nu$ and noting that $\varepsilon > 0$ and $\pi \in \cplba(\mu, \nu)$ were chosen arbitrarily completes the proof.
		\end{proof}
				
\appendix
\section{Proofs from \Cref{sec:preliminaries}}\label{app:prelim}
	\begin{proof}[Proof of \Cref{prop:general-optimality}]
		For $h > 0$, \Cref{ass:it-1} implies that the Knothe--Rosenblatt rearrangement is optimal for \eqref{eq:bcot} between $\mu^h$ and $\nu^h$, by \Cref{prop:kr-optimal}. Taking a common Brownian motion $W = \bar W$ in \Cref{ass:it-2}, the Knothe--Rosenblatt rearrangement is equal to $\Law(X^h, \bar X^h)$, by \Cref{rem:kr-equivalent-def}. This together with \Cref{ass:it-3} allows us to 
		conclude that the limit \eqref{eq:limit-bcot} holds and the synchronous coupling attains the value \eqref{eq:bcot-cont} of the continuous-time bicausal transport problem. This final step is a straightforward modification of the proof of stability of bicausal optimal transport in \citep[Proposition 2.6, Corollary 2.7]{BaKaRo22}, and we omit the details here.
	\end{proof}
	
	\begin{proof}[Proof of \Cref{lem:stability}]
		Let $\pi \in \cplbc(\mu, \nu)$. Then by \cite[Proposition 2.2]{BaKaRo22} on the relation between bicausal couplings and correlated Brownian motions, there exists a correlated Brownian motion $(W, \bar W)$ such that $\pi = \Law(X, \bar X)$ when \eqref{eq:sde-correlated} is driven by $(W, \bar W)$. Taking the same driving Brownian motion, we write $\pi^N \coloneqq \Law(X^N, \bar X^N) \in \cplbc(\mu^N, \nu^N)$, for each $N \in \N$. Interpreting each such system of SDEs as a two-dimensional SDE, the stability result of \citet[Theorem 11.1.4]{StVa79} implies the weak convergence $\pi^N \rightharpoonup \pi$ as $N \to \infty$. Moreover, the uniform linear growth bound implies that, for any $p \geq 1$, there exists a constant $C > 0$ such that $\E^{\pi^N}[\sup_{t \in [0, T]}|(\omega_t, \bar \omega_t)|^p] \leq C$ for all $N \in \N$; see, e.g.~ \cite[Lemma 3.8]{GiSk79}. Thus $\E^{\pi^N}[\phi(\omega, \bar \omega)] \to \E^\pi[\phi(\omega, \bar \omega)]$, for any functional $\phi \colon \Omega \times \Omega \to \R$ that is continuous with order $p$ polynomial growth with respect to the uniform norm on $\Omega \times \Omega$; see, e.g.~\cite[Definition 6.8]{Vi09}. We claim that $(\omega, \bar \omega) \mapsto \int_0^T c_t(\omega_t, \bar \omega_t) \D t$ satisfies these conditions. Indeed the polynomial growth of order $p$ follows immediately from the bound \eqref{eq:poly-growth}, which holds uniformly in $t$. Continuity follows from \eqref{eq:poly-growth} and the continuity of the maps $(x, y) \mapsto c_t(x, y)$ for each $t \in [0, T]$ via the Lebesgue--Vitali dominated convergence theorem. Hence
		\begin{equation}\label{eq:stability-one-direction}
			\E^{\pi^N}\biggl[\int_0^Tc_t(\omega_t, \bar \omega_t) \D t\biggr] \xrightarrow{N \to \infty} \E^\pi\biggl[\int_0^Tc_t(\omega_t, \bar \omega_t) \D t\biggr].
		\end{equation}
		
		Now take a common one-dimensional Brownian motion driving \eqref{eq:sde-correlated} for each set of coefficients. Then we have
		\begin{equation}
			\pi^\sync_{\mu^N, \nu^N} = \Law(X^N, \bar X^N) \xrightarrow{N \to \infty} \Law(X, \bar X) = \pi^\sync_{\mu, \nu}.
		\end{equation}
		Given that $\pi^\sync_{\mu^N, \nu^N}$ minimises the left-hand side of \eqref{eq:stability-one-direction} over $\cplba(\mu^N, \nu^N)$ for each $N \in \N$, we have that $\pi^\sync_{\mu, \nu}$ minimises the right-hand side of \eqref{eq:stability-one-direction} over $\cplba(\mu, \nu)$ and the required convergence holds.
	\end{proof}
	
	\begin{proof}[Proof of \Cref{prop:time-dependent-lipschitz}]
		Under \Cref{ass:lipschitz}, we can show that the solutions of \eqref{eq:sde-correlated} and the corresponding monotone Euler--Maruyama schemes satisfy \Cref{ass:comon-scheme}, following the proofs of  \cite[Lemma 3.14, Lemma 3.15, Proposition 3.16]{BaKaRo22}. The result then follows from \Cref{prop:general-optimality}.
	\end{proof}
	
	\begin{proof}[Proof of \Cref{cor:regular}]
		For each SDE, existence of a weak solution is guaranteed by \citet[Chapter 3, Section 3]{Sk65}, and given that we assumed pathwise uniqueness, \citet[Corollary 1]{YW} implies that there exists a unique strong solution. We can approximate the coefficients $(b, \sigma)$ and $(\bar b, \bar \sigma)$ locally uniformly in space by coefficients satisfying \Cref{ass:lipschitz} and the conditions of \Cref{lem:stability}. We conclude by combining \Cref{prop:time-dependent-lipschitz} and \Cref{lem:stability}.
	\end{proof}
	
\section{Proofs from \Cref{sec:transformation}}\label{app:proofs-transform}

	\begin{proof}[Proof of \Cref{lem:phi-bar}]
		Direct computation of the first and second derivatives of $\bar \phi_k$ yield the first three points. We also observe that the first derivative is continuous and bounded, and hence $\bar \phi_k$ is Lipschitz. The second derivative is defined almost everywhere and is continuous and bounded on $(- \infty, \xi_k) \cup (\xi_k, \infty)$. Combined with the fact that the one-sided second derivatives at $\xi_k$ exist and are finite, this shows that $\bar \phi_k^\prime$ is Lipschitz. Finally, differentiating $\bar \phi_k^{\prime \prime}$ once again on $(- \infty, \xi_k) \cup (\xi_k, \infty)$ gives a bounded third derivative, and hence $\bar \phi_k^{\prime \prime}$ is piecewise Lipschitz with discontinuity point $\xi_k$.
	\end{proof}
	
	\begin{proof}[Proof of \Cref{lem:G-properties-growth-disc}]
		By \Cref{lem:phi-bar}, $\bar \phi_k (x) = 0$ for $|x - \xi_k| \geq c_0$ for each $k \in \{1, \dotsc, m\}$, and so we have $G(x) = x$ for $x \in \continuityset$. Moreover, for any $k \in \{1, \dotsc, m\}$ and $x \in [\xi_k - c_0, \xi_k + c_0]$, $G(x) = x + \alpha_k \bar \phi_k(x)$ and $G^\prime(x) = 1 + \alpha_k \bar \phi_k^\prime(x)$, implying that $G^\prime(x) \in [1 - 6 c_0 |\alpha_k|, 1 + 6 c_0 |\alpha_k|]$ by \Cref{lem:phi-bar}. Thus, as in \cite[Lemma 2.2]{LeSz17}, we observe that our choice of $c_0$ in \eqref{eq:c_0} guarantees that $G$ is strictly increasing on $\R$ and therefore admits a strictly increasing inverse $G^{-1} \colon \R \to \R$. From the definition of $G$ and the properties proved in \Cref{lem:phi-bar}, we immediately deduce that $G$ is bounded and Lipschitz with bounded Lipschitz first derivative $G^\prime$ and bounded piecewise Lipschitz almost-everywhere second derivative $G^{\prime \prime}$, with discontinuity points $\xi_1$, $\dotsc\,$,~$\xi_m$. By the inverse function theorem, $G^{-1}$ has the same properties.
	\end{proof}
		
	\begin{proof}[Proof of \Cref{lem:piecewise-plus-continuity}]
		Fix $c > 0$ and let $L_f$, $K_f$, $\gamma$, $\eta$ denote the constants from \Cref{it:A1} that appear in the one-sided Lipschitz and local Lipschitz conditions for $f$. By \Cref{rem:lip-compacts} and the continuity of $f$, we have that $f$ is Lipschitz on the compact interval $[\xi_1 - c, \xi_m + c]$ and hence satisfies \Cref{it:A1ii,it:A1iii} on $[\xi_1 - c, \xi_m + c]$. Suppose that  $x \in [\xi_1 - c, \xi_m + c)$, $y \in (-\infty, \xi_1 - c]$. Then, since $y < \xi_1 - c < x$, we have
		\begin{align}
			(x - y)(f(x) - f(y)) & = \frac{x - y}{x - (\xi_1 - c)}\bigl(x - (\xi_1 - c)\bigr)\bigl(f(x) - f(\xi_1 - c)\bigr)\\
			& \quad + \frac{x - y}{(\xi_1 - c) - y}\bigl((\xi_1 - c) - y\bigr)\bigl(f(\xi_1 - c) - f(y)\bigr)\\
			& \leq \frac{x - y}{x - (\xi_1 - c)}L_f|x - (\xi_1 - c)|^2 + \frac{x - y}{(\xi_1 - c) - y}L_f|(\xi_1 - c) - y|^2\\
			& = L_f (x - y)\bigl(x - (\xi_1 - c) + (\xi_1 - c) - y\bigr) = L_f |x - y|^2.
		\end{align}
		Similarly,
		\begin{align}
			|f(x) - f(y)| & \leq |f(x) - f(\xi_1 - c)| + |f(\xi_1 - c) - f(y)|\\
			& \leq K_f \bigl(e^{\gamma |x|^\eta} + e^{\gamma |\xi_1 - c|^\eta}\bigr)\bigl(x - (\xi_1 - c)\bigr) + K_f \bigl(e^{\gamma |\xi_1 - c|^\eta} + e^{\gamma |y|^\eta}\bigr)\bigl((\xi_1 - c) - y\bigr)\\
			& \leq K_f \bigl(2e^{\gamma |x|^\eta} + e^{\gamma |y|^\eta}\bigr)(x - y) + K_f \bigl(e^{\gamma |x|^\eta} + 2e^{\gamma |y|^\eta}\bigr)(x - y)\\
			& = 3 K_f \bigl(e^{\gamma |x|^\eta} + e^{\gamma |y|^\eta}\bigr)|x - y|.
		\end{align}
		The remaining cases are treated analogously, and we conclude that $f$ satisfies \Cref{it:A1ii,it:A1iii} globally on $\R$.
	\end{proof}

	\begin{proof}[Proof of \Cref{lem:transformed-coefficients}]
	 	First, as in \cite{LeSz17}, we note that the constants $\alpha_k$, $k \in \{1, \dotsc, m\}$ are chosen in \eqref{eq:alpha} such that $\tilde b$ is continuous on $\R$. Indeed by \Cref{lem:phi-bar}, for each $k \in \{1, \dotsc, m\}$, $G^\prime(\xi_k) = 1 + \alpha_k \bar \phi_k^\prime(\xi_k) = 1$, $G^{\prime \prime}(\xi_k+) = \alpha_k \bar \phi_k^{\prime \prime}(\xi_k+) = 2 \alpha_k$, $G^{\prime \prime}(\xi_k -) = - 2 \alpha_k$, and so
		\begin{equation}
			\begin{split}
				\bigl(b G^\prime + \tfrac12 \sigma^2 G^{\prime \prime}\bigr)(\xi_k+) - \bigl(b G^\prime + \tfrac12 \sigma^2 G^{\prime \prime}\bigr)(\xi_k-) & = b(\xi_k+) - b(\xi_k-) + 2\alpha_k \sigma^2(\xi_k) = 0.
			\end{split}
		\end{equation}
		Then, by \Cref{lem:G-properties-growth-disc}, $b G^\prime + \tfrac{1}{2} \sigma^2 G^{\prime \prime}$ is continuous on $\R$. Also by \Cref{lem:G-properties-growth-disc}, $G^{-1}$ is Lipschitz, and so $\tilde b$ is continuous on $\R$.
		
		Let $c > 0$. Then, on each finite interval $(\xi_1 - c, \xi_1)$, $(\xi_m, \xi_m + c)$, $(\xi_k, \xi_{k + 1})$, for $k \in \{1, \dotsc, m - 1\}$, we have that $b$, $\sigma$, and $G^\prime$ and $G^{\prime \prime}$ are bounded and Lipschitz by \Cref{rem:lip-compacts}, \Cref{it:A2}, and \Cref{lem:G-properties-growth-disc}, respectively. Thus on each such interval, $b G^\prime + \tfrac{1}{2} \sigma^2 G^{\prime \prime}$ is Lipschitz as the sum of products of bounded Lipschitz functions. Taking the composition with the Lipschitz function $G^{-1}$, we see that $\tilde b$ is Lipschitz, and thus satisfies \Cref{it:A1ii,it:A1iii}, on each interval $(\xi_1 - c, \xi_1)$, $(\xi_m, \xi_m + c)$, $(\xi_k, \xi_{k + 1})$, for $k \in \{1, \dotsc, m - 1\}$. On the intervals $(-\infty, \xi_1 - c)$ and $(\xi_m + c, + \infty)$, we have that $\tilde b = b$ and thus satisfies \Cref{it:A1ii,it:A1iii}. Since $\tilde b$ is continuous, \Cref{lem:piecewise-plus-continuity} implies statements (i) and (ii) of the lemma. Statement (iii) follows from statement (ii).
		
		Now, $\sigma G^\prime$ is globally Lipschitz, since it is the product of bounded Lipschitz functions on $[\xi_1 - c_0, \xi_m + c_0]$ and is equal to $\sigma$ on $(-\infty, \xi_1 - c_0] \cup [\xi_m + c_0, + \infty)$; cf.\ \cite[Lemma 2.5]{LeSz17}. Taking the composition with the Lipschitz function $G^{-1}$ preserves the Lipschitz property, and this concludes the proof of statement (iv).
		
		Finally, statement (v) of the lemma follows from statements (i) and (iv).
	\end{proof}	

\bibliographystyle{abbrvnat}
\bibliography{references.bib}

\end{document}